\newcommand{\Cdb}{\mbox{$\mathbb{C}$}}
\newcommand{\Rdb}{\mbox{$\mathbb{R}$}}
\renewcommand{\H}{\mbox{${\mathcal H}$}}
\newcommand{\norm}[1]{\Vert#1\Vert}
\newcommand{\bignorm}[1]{\bigl\Vert#1\bigr\Vert}
\newcommand{\Bignorm}[1]{\Bigl\Vert#1\Bigr\Vert}
\newcommand{\cbnorm}[1]{\Vert#1\Vert_{cb}}
\newcommand{\bigcbnorm}[1]{\bigl\Vert#1\bigr\Vert_{cb}}
\newcommand{\rnorm}[1]{\Vert#1\Vert_{\rm reg}}
\newcommand{\minnorm}[1]{\Vert#1\Vert_{\rm min}}
\newcommand{\minten}{\otimes_{\rm min}}
\newcommand{\pten}{\hat{\otimes}}
\newtheorem{theorem}{Theorem}[section]
\newtheorem{lemma}[theorem]{Lemma}
\newtheorem{corollary}[theorem]{Corollary}
\newtheorem{proposition}[theorem]{Proposition}
\theoremstyle{remark}
\newtheorem{remark}[theorem]{\bf Remark}
\theoremstyle{definition}
\numberwithin{equation}{section}
\begin{document}
\baselineskip 15pt

\title[]{Dilations and rigid factorisations on noncommutative
$L^p$-spaces}

\author{Marius Junge and Christian Le Merdy}
\address{Mathematics Department\\ University of Illinois
\\ Urbana IL 61801\\ USA}
\email{junge@math.uiuc.edu}
\address{D\'epartement de Math\'ematiques\\ Universit\'e de  Franche-Comt\'e
\\ 25030 Besan\c con Cedex\\ France}
\email{lemerdy@math.univ-fcomte.fr}
\date{\today}

\begin{abstract} We study some factorisation and dilation properties of
completely positive maps on noncommutative $L^p$-spaces. We show
that Akcoglu's dilation theorem for positive contractions on
classical ($=$ commutative) $L^p$-spaces has no reasonable analog
in the noncommutative setting. Our study relies on non symmetric
analogs of Pisier's operator space valued noncommutative
$L^p$-spaces that we investigate in the first part of the paper.
\end{abstract}

\maketitle

\bigskip\noindent
{\it 2000 Mathematics Subject Classification : 46L07, 46L51,
46B28}

\bigskip

\section{Introduction.}
Akcoglu's dilation theorem \cite{A,AS} for positive contractions
on classical $L^p$-spaces plays a tremendous role in various areas
of analysis. The main result of this paper says that there is no
`reasonable' analog of that result for (completely) positive
contractions acting on noncommutative $L^p$-spaces. Recall that
Akcoglu's theorem essentially says that for any measure space
$(\Omega,\mu)$, for any $1<p<\infty$ and for any positive
contraction $u\colon L^p(\Omega)\to L^p(\Omega)$, there is another
measure space $(\Omega',\mu')$, two contractions $J\colon
L^p(\Omega)\to L^p(\Omega')$ and  $Q\colon L^p(\Omega')\to
L^p(\Omega)$, and an invertible isometry $U\colon L^p(\Omega')\to
L^p(\Omega')$ such that $u^n=QU^nJ$ for any integer $n\geq 0$. Let
$S^p$ be the $p$-th Schatten space of operators $a\colon
\ell^2\to\ell^2$ equipped with the norm $\norm{a}_p =(tr(\vert a
\vert^p))^{\frac{1}{p}}$. We show that if $p\neq 2$, there exists
a completely positive contraction $u\colon S^p\to S^p$ which is
not dilatable in the noncommutative sense. Namely whenever
$L^p(M)$ is a noncommutative $L^p$-space associated with a von
Neumann algebra $M$, there is no triple $(J,Q,U)$ consisting of
contractions $J\colon S^p\to L^p(M)$ and $Q\colon L^p(M)\to S^p$,
and of an invertible isometry $U\colon L^p(M)\to L^p(M)$, such
that $u^n=QU^nJ$ for any integer $n\geq 0$. Let $p'=p/(p-1)$ be
the conjugate number of $p$. We actually show the stronger result
that there is no pair $(T,S)$ of isometries $T\colon S^p\to
L^p(M)$ and $S\colon S^{p'}\to L^{p'}(M)$ such that $u=S^*T$.

The `need' of a noncommutative version of Akcoglu's theorem (and
its semigroup version \cite{F}) came out from some recent work of
Q. Xu and the authors devoted to diffusion semigroups on
noncommutative $L^p$-spaces \cite{JLX}. The lack of a
noncommutative Akcoglu's theorem turns out to be a key feature of
this topic.

We give two proofs of our main result. In Section 4, we give a non
constructive one, that is, we show the existence of a completely
positive contraction $u\colon S^p\to S^p$ which is not dilatable
without giving an explicit example. In Section 5, we provide a
second proof, which is longer but shows an explicit example. Our
proofs rely on various properties of a class of operator space
valued noncommutative $L^p$-spaces which we investigate in
Sections 2 and 3, and on $L^p$-matricially normed spaces
\cite{JLM}.

\bigskip
We will need a few techniques from operator space theory and we
refer the reader to either \cite{ER2} or \cite{P2} for the
necessary background on this topic. If $E,F$ are any two operator
spaces, we let $CB(E,F)$ denote the space of all completely
bounded maps $u\colon E\to F$. We let $\cbnorm{u}$ denote the
completely bounded norm of such a map and we say that $u$ is a
complete contraction if $\cbnorm{u}\leq 1$. We let $E\otimes_h F$
and $E\minten F$ denote the Haagerup tensor product and the
minimal tensor product of $E$ and $F$, respectively. Then we let
$\minnorm{\ }$ denote the norm on $E\minten F$.

For any integer $k\geq 1$ we let $M_k$ be the space of all
$k\times k$ matrices equipped with the operator norm and for any
$1\leq p<\infty$, we let $S^p_k$ be that space equipped with the
$p$-th Schatten norm. Also we use the notation $S^\infty$ for the
$C^*$-algebra of compact operators on $\ell^2$. Unless stated
otherwise, we let $(e_k)_{k\geq 1}$ denote the canonical basis of
$\ell^2$ and for any $i,j\geq 1$, we let
$E_{ij}\colon\ell^2\to\ell^2$ be the matrix unit taking $e_j$ to
$e_i$ and taking $e_k$ to $0$ for any $k\not= j$. If $X$ is any
vector space, we regard as usual $S^p_k\otimes X$ as the space of
all $k\times k$ matrices with entries in $X$, writing $[x_{ij}]$
for $\sum_{i,j} E_{ij}\otimes x_{ij}$ whenever $x_{ij}\in X$.

\medskip
\section{Some noncommutative operator space valued $L^p$-spaces.}
In this section we introduce a variant of the noncommutative
vector valued $L^p$-spaces considered by Pisier in \cite[Chapter
3]{P0} and we establish a few preliminary results. We refer the
reader to \cite{J} and \cite{JP} for related constructions. We
start with some background and preliminary results on
noncommutative $L^p$-spaces associated with a trace. We shall only
give a brief account on theses spaces and we refer to
\cite{T},\cite{FK}, \cite{PX2} and the references therein for more
details and further information.

\bigskip
We let $(M,\varphi)$ be a semifinite von Neumann algebra equipped
with a normal semifinite faithful trace $\varphi$. Then we let
\begin{equation}\label{4V}
V(M) =\bigcup eMe,
\end{equation}
where the union runs over all projections $e\in M$ such that
$\varphi(e)<\infty$. This is a $*$-algebra and the semifiniteness
of $\varphi$ ensures that $V(M)$ is $w^*$-dense in $M$. Let us
write $V=V(M)$ for simplicity and let $V_+=M_+\cap V$ denote the
positive part of $V$. Then any $a\in V_+$ has a finite trace.

Let $1\leq p<\infty$. For any $a\in V$, the operator $\vert
a\vert^p$ belongs to $V$ and we set
$$
\norm{a}_p\,=\,\bigl(\varphi(\vert
a\vert^p)\bigr)^{\frac{1}{p}},\qquad a\in V.
$$
Here $\vert a\vert = (a^*a)^{\frac{1}{2}}$ denotes the modulus of
$a$. It turns out that $\norm{\ }_p$ is a norm on $V$. By
definition, the noncommutative $L^p$-space associated with
$(M,\varphi)$ is the completion of $(V,\norm{\ }_p)$. It is
denoted by $L^p(M)$. For convenience, we also set
$L^{\infty}(M)=M$ equipped with the operator norm $\norm{\
}_\infty$.

Assume that $M\subset B(H)$ acts on some Hilbert space $H$, and
let $M'\subset B(H)$ denote the commutant of $M$. It will be
fruitful to have a description of the elements of $L^p(M)$ as
(possibly unbounded) operators on $H$. We say that a closed and
densely defined operator $a$ on $H$ is affiliated with $M$ if $a$
commutes with any unitary of $M'$. Then we say that an affiliated
operator $a$ is measurable (with respect to the trace $\varphi$)
provided that there is a positive real number $\lambda>0$ such
that $\varphi(\epsilon_\lambda)<\infty$, where $\epsilon_\lambda =
\chi_{[\lambda,\infty)}(\vert a\vert)$ is the projection
associated to the indicator function of $[\lambda,\infty)$ in the
Borel functional calculus of $\vert a \vert$. The set $L^0(M)$ of
all measurable operators is a $*$-algebra (see e.g. \cite[Chapter
I]{T} for a proof and a precise definition of the sum and product
on $L^0(M)$).

We recall further properties of $L^0(M)$ that will be used later
on. First for any $a$ in $L^0(M)$ and any $0<p<\infty$, the
operator $\vert a\vert^p =(a^*a)^{\frac{p}{2}}$ belongs to
$L^0(M)$. Second, let $L^0(M)_+$ be the positive part of $L^0(M)$,
that is, the set of all selfadjoint positive operators in
$L^0(M)$. Then the trace $\varphi$ extends to a positive tracial
functional on $L^0(M)_+$, still denoted by $\varphi$, in such a
way that for any $1\leq p<\infty$, we have
$$
L^p(M)\, =\, \bigl\{a\in L^0(M)\, :\, \varphi(\vert
a\vert^p)<\infty\bigr\},
$$
equipped with $\norm{a}_p = (\varphi(\vert a
\vert^p))^{\frac{1}{p}}$. Furthermore, $\varphi$ uniquely extends
to a bounded linear functional on $L^1(M)$, still denoted by
$\varphi$. For any $a,c\in L^0(M)$, we have $ac\in L^1(M)$ if and
only if $ca\in L^1(M)$ and in this case,
$\varphi(ac)=\varphi(ca)$. Furthermore we have
$$
\vert\varphi(a)\vert\leq\varphi(\vert a\vert)=\norm{a}_1
$$
for any $a\in L^1(M)$.

Let $1\leq p,q,s\leq \infty$ such that
$\frac{1}{p}+\frac{1}{q}=\frac{1}{s}$. The so-called
noncommutative H$\ddot{\rm o}$lder inequality asserts that
$L^p(M)\cdotp L^q(M) \subset L^s(M)$ and that we have
\begin{equation}\label{2Holder}
\norm{ac}_s \leq \norm{a}_p \norm{c}_q,\qquad a\in L^p(M),\ c\in
L^q(M).
\end{equation}
For any $1\leq p<\infty$, let $p'=p/(p-1)$ be the conjugate number
of $p$. Applying (\ref{2Holder}) with $q=p'$ and $s=1$, we may
define a duality pairing between $L^p(M)$ and $L^{p'}(M)$ by
\begin{equation}\label{2dual2}
\langle a,c \rangle\, =\, \varphi(ac), \qquad a\in L^p(M),\ c\in
L^{p'}(M).
\end{equation}
This induces an isometric isomorphism
$$
L^p(M)^*\,=\, L^{p'}(M),\qquad 1\leq p <\infty,\quad \frac{1}{p}
+\frac{1}{p'}=1.
$$
In particular, we may identify $L^1(M)$ with the (unique) predual
$M_*$ of $M$.

We will assume that the reader is familiar with complex
interpolation of Banach spaces, for which we refer to \cite{BL}.
We recall that by means of the embeddings of $L^\infty(M)$ and
$L^{1}(M)$ into $L^{0}(M)$, one may regard $(L^{\infty}(M),
L^{1}(M))$ as a compatible couple of Banach spaces and that we
have
\begin{equation}\label{4Inter}
[L^{\infty}(M),L^1(M)]_{\frac{1}{p}}\,=\, L^p(M),\qquad 1\leq p
\leq\infty,
\end{equation}
where $[\cdotp,\cdotp]_\theta$ denotes the complex interpolation
method.

\bigskip

For any $1\leq p<\infty$, we let $L^p(M)_+ = L^0(M)_+\cap L^p(M)$
denote the positive part of $L^p(M)$. We recall that the support
projection $Q$ of any element $b\in L^p(M)_+$ is the orthogonal
projection onto the closure of the range of $b$, and that ${\rm
ker}(Q)={\rm ker}(b)$. This projection belongs to $M$.

\begin{lemma}\label{4support}
Let $1\leq p,q,s\leq\infty$ such that $\frac{1}{p}+\frac{1}{q}
=\frac{1}{s}$ and $s<\infty$. Let $b\in L^p(M)_+$ and let $Q$ be
its support projection. Then $\overline{b L^q(M)}^{\norm{\ }_s} =
QL^s(M)$.
\end{lemma}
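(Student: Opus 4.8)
The plan is to prove the two inclusions $\overline{b L^q(M)}^{\norm{\ }_s}\subseteq QL^s(M)$ and $QL^s(M)\subseteq\overline{b L^q(M)}^{\norm{\ }_s}$ separately. The first one is routine: since $Q$ is the support projection of $b\in L^p(M)_+$ we have $Qb=b$, so for every $c\in L^q(M)$ the noncommutative H\"older inequality (\ref{2Holder}) gives $bc\in L^s(M)$ with $bc=Q(bc)$. Because left multiplication by $Q$ is a contraction on $L^s(M)$, the subspace $QL^s(M)=\{x\in L^s(M):Qx=x\}$ is the kernel of a bounded operator, hence closed, and therefore contains $\overline{b L^q(M)}^{\norm{\ }_s}$.

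The substance is the reverse inclusion, and here I would argue by duality rather than constructively. A direct attempt to solve $x=bc$ with $c\in L^q(M)$ runs into an integrability obstruction: the relation $\frac1q=\frac1s-\frac1p$ forces $q\geq s$, so ``dividing by $b$'' overshoots the integrability at hand, and multiplying $x\in L^s(M)$ by a bounded function of $b$ only keeps one in $L^s(M)\not\subseteq L^q(M)$. Since $s<\infty$, we instead use $L^s(M)^*=L^{s'}(M)$ and Hahn--Banach: it suffices to show that any $y\in L^{s'}(M)$ annihilating $b L^q(M)$ also annihilates $QL^s(M)$.

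The key computation identifies these two annihilators. For $y\in L^{s'}(M)$ and $c\in L^q(M)$, cyclicity of the trace gives $\varphi(bcy)=\varphi(c\,yb)$, where $yb\in L^{q'}(M)$ by H\"older (one checks $\frac1{s'}+\frac1p=\frac1{q'}$). Thus $y$ annihilates $b L^q(M)$ if and only if $\varphi(c\,yb)=0$ for all $c\in L^q(M)$, which by the duality $L^{q'}(M)=L^q(M)^*$ (respectively the nondegeneracy of the $M$--$L^1(M)$ pairing when $q=\infty$) means precisely $yb=0$. In the same way, $y$ annihilates $QL^s(M)$ if and only if $yQ=0$. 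So everything reduces to the implication $yb=0\Rightarrow yQ=0$.

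For the final step I would exploit the support projection directly. Setting $g_n(t)=t^{-1}$ on $[1/n,\infty)$ and $g_n(t)=0$ otherwise, one has $g_n(b)\in M$ and $e_n:=b\,g_n(b)=\chi_{[1/n,\infty)}(\vert b\vert)$, a spectral projection of $b$ with $e_n\uparrow Q$. Hence $yb=0$ yields $ye_n=yb\,g_n(b)=0$ for all $n$. Writing the polar decomposition $y=u\vert y\vert$ with right support projection $r(y)$, the equality $ye_n=0$ is equivalent to $e_n\,r(y)=0$, so $e_n\leq 1-r(y)$ for every $n$; letting $n\to\infty$ gives $Q=\sup_n e_n\leq 1-r(y)$, that is $r(y)Q=0$, and therefore $yQ=u\vert y\vert r(y)Q=0$. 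The main obstacle is precisely this reverse inclusion: it resists any explicit solution of $x=bc$ for the exponent reasons above, and passing to the dual is what converts it into the clean operator-algebraic statement $yb=0\Rightarrow yQ=0$, settled by the spectral approximation $b\,g_n(b)\uparrow Q$.
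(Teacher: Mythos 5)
Your proof is correct and follows essentially the same route as the paper: both arguments reduce the reverse inclusion, via $L^s(M)^*=L^{s'}(M)$ and annihilators, to the implication that a functional killing $bL^q(M)$ satisfies $yb=0$ and hence kills $QL^s(M)$. The only difference is one of detail: the paper asserts $cb=0\Rightarrow cQ=0$ without comment, whereas you justify it carefully via the spectral approximation $b\,g_n(b)=\chi_{[1/n,\infty)}(b)\uparrow Q$, which is a sound (and welcome) elaboration of the same step.
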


\begin{proof} Let $s'$ be the conjugate number of $s$.
Since $(QL^s(M))^{\perp} = L^{s'}(M)(1-Q)$, it suffices to show
that $(b L^q(M))^{\perp} = L^{s'}(M)(1-Q)$. If $c\in (b
L^q(M))^{\perp}$, then $\varphi(cba)=0$ for any $a\in L^q(M)$,
hence $cb=0$. This implies $cQ=0$, hence $c\in L^{s'}(M)(1-Q)$.
This proves one inclusion and the other one is obvious.
\end{proof}

\begin{lemma}\label{4Factor}
Assume that $p\geq 2$ and let $q\geq 2$ be defined by $\frac{1}{p}
+ \frac{1}{q}=\frac{1}{2}$. Let $y\in L^{p'}(M)$, $a\in L^q(M)_+$
and $b\in L^2(M)_+$ such that
$$
\bigl\vert\varphi(yzd)\bigr\vert\,\leq\,\norm{da}_2\norm{bz}_2
$$
for any $z\in M$ and $d\in L^p(M)$. Let $Q_a$ and $Q_b$ be the
support projections of $a$ and $b$, respectively. Then there
exists $w\in M$ such that $\norm{w}\leq 1$, $y=awb$ and
$w=Q_awQ_b$.
\end{lemma}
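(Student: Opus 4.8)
The plan is to convert the bilinear estimate into a bounded bilinear form on a product of corners of $L^2(M)$, represent it by a Hilbert space operator, and then recognise that operator as right multiplication by the sought element $w$, using the fact that the commutant of the left multiplication representation of $M$ on $L^2(M)$ consists exactly of the right multiplications.

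First I would note that the hypothesis says precisely that the bilinear map $(d,z)\mapsto\varphi(yzd)$, for $d\in L^p(M)$ and $z\in M$, factors through the pair $(da,bz)\in L^2(M)\times L^2(M)$: if $da=0$ (resp. $bz=0$) the estimate forces $\varphi(yzd)=0$. Hence there is a well-defined bilinear form $\tilde B$ with $\tilde B(da,bz)=\varphi(yzd)$ and $\bigl|\tilde B(\xi,\eta)\bigr|\le\norm{\xi}_2\norm{\eta}_2$. Using Lemma \ref{4support} together with its right-handed version obtained by taking adjoints, I would identify $\overline{L^p(M)a}^{\norm{\ }_2}=L^2(M)Q_a$ and $\overline{bM}^{\norm{\ }_2}=Q_bL^2(M)$, so that $\tilde B$ extends by continuity to a contractive bilinear form on $L^2(M)Q_a\times Q_bL^2(M)$. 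The extra structure to exploit is the covariance $\tilde B(m\xi,\eta)=\tilde B(\xi,\eta m)$ for $m\in M$, which comes directly from the identity $\varphi(y\,zm\,d)=\varphi(y\,z\,md)$ and traciality.

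Since $u\mapsto\varphi(u\eta)$ gives a non-degenerate pairing of $L^2(M)Q_b$ with $Q_bL^2(M)$, I would then represent $\tilde B$ by a contraction $T\colon L^2(M)Q_a\to L^2(M)Q_b$ with $\tilde B(\xi,\eta)=\varphi(T\xi\,\eta)$. Feeding the covariance into this and cycling the trace yields $\varphi(T(m\xi)\eta)=\varphi(mT\xi\,\eta)$ for all admissible $\eta$, whence $T(m\xi)=mT\xi$ by non-degeneracy; that is, $T$ intertwines the left multiplications. Extending $T$ by zero on $L^2(M)(1-Q_a)$ produces $\tilde T\in B(L^2(M))$ commuting with the entire left regular representation of $M$, so $\tilde T=R_w$, right multiplication by a unique $w\in M$ with $\norm{w}\le\norm{\tilde T}\le1$. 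Because $\tilde T$ annihilates $L^2(M)(1-Q_a)$ and takes values in $L^2(M)Q_b$, the element $w$ satisfies $w=Q_aw$ and $w=wQ_b$, hence $w=Q_awQ_b$. Finally, unwinding the definitions gives $\varphi(yzd)=\tilde B(da,bz)=\varphi(da\,w\,bz)=\varphi\bigl((awb)(zd)\bigr)$ for all $z\in M$ and $d\in L^p(M)$; taking $z=1$ and using the non-degeneracy of the $L^{p'}$--$L^p$ duality forces $y=awb$. The main obstacle is the step producing $w$: one must ensure that the left-module map $T$ is implemented by a genuinely \emph{bounded} element of $M$ rather than an unbounded affiliated operator, and that the two one-sided support conditions combine correctly — this is exactly where the commutation theorem and the corner bookkeeping through $Q_a$ and $Q_b$ are essential.
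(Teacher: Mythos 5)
Your proof is correct and follows essentially the same route as the paper's: both convert the hypothesis into a bounded form on corners of $L^2(M)$ (via Lemma \ref{4support}), represent it by a contraction of $L^2(M)$ that intertwines one-sided multiplications, and invoke the commutation theorem to identify that contraction as multiplication by an element $w\in M$ with $\norm{w}\leq 1$, from which $y=awb$ and $w=Q_awQ_b$ follow. The only differences are mirror-image conventions: you work with a bilinear form via the trace pairing and obtain right multiplication by $w$, whereas the paper works with a sesquilinear form via the inner product (absorbing adjoints) and obtains left multiplication by $w$.
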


\begin{proof}
By Lemma \ref{4support}, $aL^p(M)$ and $bM$ are dense subspaces of
$Q_a L^2(M)$ and $Q_b L^2(M)$, respectively. Hence according to
our assumption, there exists a (necessarily unique) continuous
sesquilinear form $\sigma\colon Q_b L^2(M)\times Q_a L^2(M)\to
\Cdb$ such that $\sigma(bz, ad) = \varphi(yzd^*)$ for any $z\in M$
and any $d\in L^p(M)$. Let $\overline{\sigma}$ be the contractive
sesquilinear form on $L^2(M)$ defined by
$\overline{\sigma}(g,h)=\sigma(Q_bg,Q_ah)$ and let
$$
T\colon L^2(M)\longrightarrow L^2(M)
$$
be the associated linear contraction. By construction we have
\begin{equation}\label{4Ident}
\langle T(bz), ad\rangle_2 = \varphi(yzd^*),\qquad z\in M,\ d\in
L^p(M)
\end{equation}
and
\begin{equation}\label{4Identb}
\langle T(g),h\rangle_2 = \langle T(Q_bg),Q_ah\rangle_2,\qquad
g,h\in L^2(M),
\end{equation}
where $\langle\ ,\ \rangle_2$ denotes the inner product on
$L^2(M)$.

We claim that for any $c\in M$ and any $g\in L^2(M)$, we have
$T(gc) = T(g)c$. Indeed, for any $z\in M$ and $d\in L^p(M)$ we
have
$$
\langle T(bzc), ad\rangle_2 =\varphi(yzcd^*) = \varphi(yz(dc^*)^*)
=\langle T(bz), adc^*\rangle_2
$$
by (\ref{4Ident}). Consequently we have
$$
\langle T(Q_bgc),Q_ah\rangle_2 =\langle T(Q_bg),Q_ahc^*\rangle_2
$$
for any $g,h\in L^2(M)$, and hence
$$
\langle T(gc), h\rangle_2 =\langle T(g),hc^*\rangle_2 =\langle
T(g)c,h\rangle_2
$$
by (\ref{4Identb}). This proves the claim.

Consequently there exists $w\in M$, with $\norm{w}_\infty
=\norm{T}\leq 1$, such that $T(g)=wg$ for any $g\in L^2(M)$. Using
(\ref{4Ident}) again, we find  that
$$
\varphi(awb zd^*) = \varphi(w(bz)(ad)^*)=\varphi(yzd^*)
$$
for any $z\in M$ and any $d\in L^p(M)$. This shows that $y=awb$.

The identity  (\ref{4Identb}) ensures that $\langle Q_a wQ_b
g,h\rangle= \langle w g,h\rangle$ for any $g,h\in L^2(M)$. Hence
we have $w =Q_a w Q_b$.
\end{proof}

\bigskip
We introduce a notation which will be used throughout. Suppose
that $p,q,r,s\geq 1$ satisfy
$\frac{1}{q}+\frac{1}{r}+\frac{1}{s}=\frac{1}{p}$. Let $X$ be any
vector space, let $y\in L^r(M)\otimes X$ and let $(a_k)_k$ and
$(x_k)_k$ be finite families in $L^r(M)$ and $X$ respectively such
that $y=\sum_k a_k\otimes x_k$. Then for any $c\in L^q(M)$ and
$d\in L^s(M)$, we will write $cyd$ for the element of
$L^p(M)\otimes X$ defined by
$$
cyd\,=\,\sum_k ca_kd\otimes x_k.
$$

Let $F$ be an operator space, let $1\leq p<\infty$ and let $y\in
V\otimes F$. If $p\geq 2$, we let
$$
\norm{y}_{\alpha_p^\ell}\,=\,\inf\bigl\{\norm{c}_\infty
\minnorm{z}\norm{d}_{p}\bigr\},
$$
where the infimum runs over all $c,d\in V$ and all $z\in M\otimes
F$ such that $y=czd$. Here $\minnorm{z}$ denotes the norm of $z$
in $M\minten F$. Arguing as in the proof of \cite[Lemma 3.5]{P0},
it is not hard to check that $\norm{\ }_{\alpha_p^\ell}$ is a norm
on $V\otimes F$. The proof of the triangle inequality relies on
the convexity condition
$$
\bignorm{(d_1^* d_1 + d_2^*
d_2)^{\frac{1}{2}}}_p\leq\bigl(\norm{d_1}_p^2
+\norm{d_2}_p^2\bigr)^{\frac{1}{2}},\qquad d_1,d_2\in L^p(M),
$$
and the latter holds because $p\geq 2$.

If $p\leq 2$, we let $q\geq 2$ be such that
$\frac{1}{2}+\frac{1}{q} \,=\,\frac{1}{p}$, and we let
$$
\norm{y}_{\alpha_p^\ell}\,=
\,\inf\bigl\{\norm{a}_q\minnorm{z}\norm{b}_{2}\bigr\},
$$
where the infimum runs over all $a,b\in V$, and all $z\in M\otimes
F$ such that $y=azb$. Arguing again as in \cite[Lemma 3.5]{P0}, we
find that $\norm{\ }_{\alpha_p^\ell}$ is a norm on $V\otimes F$.
Then for any $p\geq 1$, we define the space
$$
L^p\{M;F\}_\ell
$$
as the completion of $V\otimes F$ for the norm $\norm{\
}_{\alpha_p^\ell}$.

Likewise, if $p\geq 2$, we let
$$
\norm{y}_{\alpha_p^r}\,=\,\inf\bigl\{\norm{c}_{p}\minnorm{z}\norm{d}_\infty
\bigr\},
$$
where the infimum runs over all $c,d\in V$ and all $z\in M\otimes
F$ such that $y=czd$. Then if $p\leq 2$ we let
$$
\norm{y}_{\alpha_p^r}\,=\,\inf
\bigl\{\norm{a}_2\minnorm{z}\norm{b}_{q}\bigr\},
$$
where the infimum runs over all $a,b\in V$, and all $z\in M\otimes
F$ such that $y=azb$. We obtain that $\norm{\ }_{\alpha_p^r}$ is a
norm on $V\otimes F$ as before, and we let
$$
L^p\{M;F\}_r
$$
be the completion of $V\otimes F$ for that norm.

In the case when $M=M_k$, these definitions reduce to the ones
given in \cite[Section 2]{JLM} and we have
$$
S^p_k\{F\}_\ell=L^p\{M_k;F\}_\ell\qquad\hbox{and}\qquad
S^p_k\{F\}_r=L^p\{M_k;F\}_r,
$$
where $S^p_k\{F\}_\ell$ and $S^p_k\{F\}_r$ are the spaces
introduced in the latter paper.

For any $\eta\in F^{*}$, the linear mapping $I_V\otimes\eta\colon
V\otimes F\to V$ (uniquely) extends to a bounded map
$\overline{\eta}\colon L^p\{M;F\}_\ell\to L^p(M)$, and we have
$\norm{\overline{\eta}}=\norm{\eta}$. Indeed assume for example
that $p\geq 2$, and let $y=czd\in V\otimes F$, with $c,d\in V$ and
$z\in M\otimes F$. Let $(a_k)_k$ and $(x_k)_k$ be finite families
in $M$ and $F$ respectively, such that $z=\sum_k a_k\otimes x_k$.
Then $(I_V\otimes\eta)y=\sum_k \langle\eta,x_k\rangle\,ca_k d$,
hence
\begin{align*}
\norm{(I_V\otimes\eta)y}_p\, &
\leq\,\norm{c}_\infty\Bignorm{\sum_k
\langle\eta,x_k\rangle\, a_k}_\infty\norm{d}_p\\
\, & \leq\,\norm{c}_\infty\norm{\eta}\minnorm{z}\norm{d}_p.
\end{align*}
Passing to the infimum over all $c,d,z$ factorising $y$, we obtain
that
$\norm{(I_V\otimes\eta)y}_p\leq\norm{\eta}\norm{y}_{\alpha_p^\ell}$.

Thanks to the above fact, we have a canonical (dense) inclusion
\begin{equation}\label{4Inc}
L^p(M)\otimes F\subset L^p\{M;F\}_\ell.
\end{equation}
More precisely, the bilinear mapping $V\times F\to V\otimes
F\subset L^p\{M;F\}_\ell$ obviously extends to a contractive
bilinear mapping $L^p(M)\times F\to L^p\{M;F\}_\ell$, which yields
a linear mapping $\kappa\colon L^p(M)\otimes F\to
L^p\{M;F\}_\ell$. Then we obtain (\ref{4Inc}) by showing that
$\kappa$ is one-to-one. For that purpose, let $y$ in
$L^p(M)\otimes F$ and assume that $\kappa(y)=0$. For any $\eta\in
F^*$, we have $(\overline{\eta}\circ\kappa) y = (I_{L^p}\otimes
\eta)y$, hence $(I_{L^p}\otimes \eta)y=0$. This shows that $y=0$.

The next lemma follows from the above discussion. We omit its easy
proof.

\begin{lemma}\label{4Pisier3}
\
\begin{itemize}
\item [(1)] Assume that $p\geq 2$. Then for any $z\in M\otimes F$
and any $d\in L^p(M)$, we have
$$
\norm{zd}_{L^p\{M;F\}_\ell}\,\leq\,\minnorm{z}\norm{d}_{p}.
$$
\item [(2)] Assume that $p\leq 2$, and that
$\frac{1}{2}+\frac{1}{q} \,=\,\frac{1}{p}$. Then for any $z\in
M\otimes F$ and any $a\in L^r(M),\, b\in L^2(M)$, we have
$$
\norm{azb}_{L^p\{M;F\}_\ell}\,\leq\,
\norm{a}_q\minnorm{z}\norm{b}_{2}.
$$
\item [(3)] The embedding (\ref{4Inc}) extends to a contractive
linear map $L^p(M)\pten F\to L^p\{M;F\}_\ell$, where $\pten$
denotes the  Banach space projective tensor product.
\end{itemize}
\end{lemma}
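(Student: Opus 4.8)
The plan is to dispatch (3) first and then use it to identify limits in the proofs of (1) and (2). For (3), recall that the discussion just above the statement produced a contractive bilinear map $L^p(M)\times F\to L^p\{M;F\}_\ell$ extending $(v,f)\mapsto v\otimes f$. By the universal property of the projective tensor product, a contractive bilinear map factors through a contractive linear map on $L^p(M)\pten F$, and on the algebraic tensor product this map is precisely $\kappa$; this is exactly what (3) claims, so nothing further is needed. In particular $\kappa$ is continuous for the projective norm, which is the fact I will exploit in (1) and (2).

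For (1), fix $z=\sum_k a_k\otimes x_k\in M\otimes F$ and $d\in L^p(M)$ with $p\geq2$, so that $zd=\kappa\bigl(\sum_k (a_kd)\otimes x_k\bigr)$ makes sense by the H\"older inequality (\ref{2Holder}). I would first settle the case $d=v\in V$: then $zv\in V\otimes F$, and if $e$ is a finite-trace projection with $e\,a_kv=a_kv$ for every $k$ (the join of the left support projections of the $a_kv$, each of finite trace), the identity $zv=e\,z\,v$ is an admissible factorisation in the infimum defining $\norm{\ }_{\alpha_p^\ell}$, with $e,v\in V$ and $z\in M\otimes F$. Hence $\norm{zv}_{\alpha_p^\ell}\leq\norm{e}_\infty\minnorm{z}\norm{v}_p=\minnorm{z}\norm{v}_p$. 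For general $d$ I take $v_n\in V$ with $v_n\to d$ in $L^p(M)$; by (\ref{2Holder}) each $a_kv_n\to a_kd$ in $L^p(M)$, so $\sum_k(a_kv_n)\otimes x_k\to\sum_k(a_kd)\otimes x_k$ in $L^p(M)\pten F$, and the contraction from (3) gives $zv_n\to zd$ in $L^p\{M;F\}_\ell$. Letting $n\to\infty$ in $\norm{zv_n}_{\alpha_p^\ell}\leq\minnorm{z}\norm{v_n}_p$ yields the claim.

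The argument for (2) has the same shape. For $a,b\in V$ and $z=\sum_k c_k\otimes x_k\in M\otimes F$ in the range $p\leq2$, the factorisation $azb=a\,z\,b$ is already admissible in the infimum defining $\norm{\ }_{\alpha_p^\ell}$, so $\norm{azb}_{\alpha_p^\ell}\leq\norm{a}_q\minnorm{z}\norm{b}_2$, with no support projection needed this time. For general $a\in L^q(M)$ and $b\in L^2(M)$ I would approximate by $a_n,b_n\in V$; the noncommutative H\"older inequality gives $a_nc_kb_n\to ac_kb$ in $L^p(M)$ for each $k$, hence $a_nzb_n\to azb$ in $L^p\{M;F\}_\ell$ by the continuity from (3), and the inequality passes to the limit.

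The only genuinely delicate point is the identification of the $\norm{\ }_{\alpha_p^\ell}$-limit of the $V$-approximants with the intended element $\kappa(zd)$ (respectively $\kappa(azb)$): a priori the completion might not force these to agree. This is precisely what part (3) resolves, since the continuity of $\kappa$ on $L^p(M)\pten F$, combined with the H\"older convergence of the individual entries, makes the two limits coincide. Everything else is a direct reading of the two defining infima, which is why the authors can afford to call the proof easy and omit it.
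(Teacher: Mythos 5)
Your proposal is correct and is essentially the ``easy proof'' the authors omit: it follows from the discussion preceding the lemma exactly as you describe, namely reading off the two defining infima (with a finite-trace support projection supplying the missing admissible left factor $c$ in case (1)), and then passing to limits of $V$-approximants via the contraction $L^p(M)\pten F\to L^p\{M;F\}_\ell$ of part (3), which indeed is just the universal property of the projective tensor product applied to the contractive bilinear map already constructed for the embedding (\ref{4Inc}). Your closing remark correctly isolates the only delicate point — identifying the $\alpha_p^\ell$-limit with $\kappa(zd)$, resp.\ $\kappa(azb)$ — and resolves it the way the paper's discussion intends.
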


We end this section with an observation regarding opposite
structures. We recall that the opposite operator space of $F$,
denoted by $F^{\rm op}$, is defined as being the vector space $F$
equipped with the following matrix norms. For any $[x_{ij}]\in
M_k\otimes F$,
$$
\bignorm{[x_{ij}]}_{M_k(F^{\rm op})}\,=\,
\bignorm{[x_{ji}]}_{M_k(F)}.
$$
(See \cite[Section 2.10]{P2}.) Then $M^{\rm op}$ coincides with
the von Neumann algebra obtained by endowing $M$ with the reverse
product $*$ defined by $a*c=ca$ (for $a,c\in M$). It is clear from
the definition that $M\minten F = M^{\rm op}\minten F^{\rm op}$
isometrically. We deduce that we have an isometric identification
\begin{equation}\label{4Opposite}
L^p\{M;F\}_r\,\simeq\, L^p\{M^{\rm op};F^{\rm op}\}_\ell.
\end{equation}
Indeed assume for example that $p\geq 2$ and let $y\in V\otimes
F$. Suppose that the norm of $y$ in $L^p\{M;F\}_r$ is $<1$. Then
there exist $c,d\in V$ and $z\in M\otimes F$ such that $y=czd$,
$\norm{c}_p<1$, $\norm{d}_\infty <1$  and $\norm{z}_{M\minten
F}<1$. Let us write $z=\sum_k a_k\otimes x_k,$ with $a_k\in M$ and
$x_k\in F$, so that $y=\sum_k ca_kd\otimes x_k$. Then $ca_kd=d*
a_k* c$ for any $k$, hence $y=d*\bigl(\sum_k a_k\otimes x_k\bigr)*
c= d*z*c$. Since $\norm{z}_{M\minten F}= \norm{z}_{M^{\rm
op}\minten F^{\rm op}}$, this implies that the norm of $y$ in
$L^p\{M^{\rm op};F^{\rm op}\}_\ell$ is $<1$. Reversing the
argument we find that the norms of $y$ in $L^p\{M;F\}_r$ and in
$L^p\{M^{\rm op};F^{\rm op}\}_\ell$ actually coincide.

\medskip
\section{Duality for $L^p\{M;F\}_\ell$.}
We let $R$ and $C$ be the standard row and column Hilbert spaces,
and we denote by $R_k$ and  $C_k$ their $k$-dimensional versions,
respectively. This section is devoted to various properties of the
dual space of $L^p\{M;F\}_\ell$, especially when $F=R$. We will
start with a description of the dual space of $S^p_k\{F\}_\ell$
for any $F$.

We recall that if $E_0$ and $E_1$ are any two operator spaces, and
if $(E_0, E_1)$ is a compatible couple in the sense of Banach
space interpolation theory, then $[E_0,E_1]_{\theta}$ has a
canonical operator space structure. Indeed its matrix norms are
given by the isometric identities
$M_k\bigl([E_0,E_1]_{\theta}\bigr)\,=\,\bigl[M_k(E_0),M_k(E_1)
\bigr]_{\theta}$. See \cite[Section 2.7]{P2} and \cite{P5} for
details and complements. For any $\theta\in[0,1]$, we let
$$
R(\theta)\,=\,[R,C]_\theta
$$
be the Hilbertian operator space obtained by applying this
construction to the couple $(R,C)$. Then we both have
$$
R(\theta)^*\,=\, R(1-\theta)\qquad\hbox{and}\qquad R(\theta)^{\rm
op} \,=\, R(1-\theta)
$$
completely isometrically for any $\theta\in [0,1]$.

Let $F$ be an operator space. We may identify $S^p_k\otimes F$
with $\ell^2_k\otimes F\otimes \ell^2_k$ be identifying
$e_i\otimes x\otimes e_j$ with $E_{ij}\otimes x$ for any $x\in F$
and any $1\leq i,j\leq k$. According to \cite{JLM}, this induces
isometric identifications
\begin{equation}\label{2Pisier5}
S^p_k\{F\}_\ell\,\simeq\, C_k \otimes_h F\otimes_h
R_k\bigl(\tfrac{2}{p}\bigr)\quad\hbox{and}\quad
S^p_k\{F\}_r\,\simeq\, R_k\bigl(1-\tfrac{2}{p}\bigr) \otimes_h
F\otimes_h R_k
\end{equation}
if $p\geq 2$, whereas
\begin{equation}\label{2Pisier6}
S^p_k\{F\}_\ell\,\simeq\,
R_k\bigl(2\bigl(1-\tfrac{1}{p}\bigr)\bigr) \otimes_h F\otimes_h
C_k\quad\hbox{and}\quad S^p_k\{F\}_r\,\simeq\, R_k \otimes_h
F\otimes_h R_k\bigl(\tfrac{2}{p}-1\bigr)
\end{equation}
if $p\leq 2$.

\begin{proposition}\label{2Duality2}
Let $1<p,p'<\infty$ be conjugate numbers and let $F$ be an
operator space. Then we have isometric identifications
\begin{equation}\label{2Pisier7}
\bigl(S^p_k\{F\}_\ell\bigr)^*\,\simeq\,S^{p'}_k\{F^{*\,\rm
op}\}_\ell \qquad\hbox{ and }\qquad
\bigl(S^p_k\{F\}_r\bigr)^*\,\simeq\,S^{p'}_k\{F^{*\,\rm op}\}_r
\end{equation}
through the duality pairing $(S^p_k\otimes
F)\times(S^{p'}_k\otimes F^*)\to\Cdb$ mapping the pair $(a\otimes
x,c\otimes\eta)$ to the complex number $tr(ac)
\langle\eta,x\rangle$ for any $a\in S^p_k$, $c\in S^{p'}_k$, $x\in
F$ and $\eta\in F^*$.
\end{proposition}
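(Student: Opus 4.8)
The plan is to derive both isometric identifications from the Haagerup tensor product descriptions (\ref{2Pisier5})--(\ref{2Pisier6}), the duality theory of the Haagerup tensor product, and the two relations $R_k(\theta)^*=R_k(1-\theta)$ and $R_k(\theta)^{\rm op}=R_k(1-\theta)$ recorded just before the statement. First I would reduce the $r$-identification to the $\ell$-identification by means of the opposite identity (\ref{4Opposite}). Taking $M=M_k$ and using that matrix transposition is a trace preserving $*$-isomorphism $M_k\to M_k^{\rm op}$, we get $S^p_k\{F\}_r\simeq S^p_k\{F^{\rm op}\}_\ell$ for every operator space $F$. Granting the $\ell$-statement for all operator spaces and applying it to $F^{\rm op}$, together with $(F^{\rm op})^{*\,\rm op}=F^*$, yields $\bigl(S^p_k\{F\}_r\bigr)^*\simeq\bigl(S^p_k\{F^{\rm op}\}_\ell\bigr)^*\simeq S^{p'}_k\{F^*\}_\ell\simeq S^{p'}_k\{F^{*\,\rm op}\}_r$, the last step again by (\ref{4Opposite}). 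So it suffices to treat the $\ell$-case.

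For the $\ell$-case assume first $p\geq 2$, so $p'\leq 2$. By (\ref{2Pisier5}) we have $S^p_k\{F\}_\ell\simeq C_k\otimes_h F\otimes_h R_k(\tfrac{2}{p})$. I would then invoke the Haagerup duality $(E_1\otimes_h E_2\otimes_h E_3)^*\simeq E_1^*\otimes_h E_2^*\otimes_h E_3^*$ (completely isometrically) and the identities $C_k^*=R_k$ and $R_k(\theta)^*=R_k(1-\theta)$ to obtain $\bigl(S^p_k\{F\}_\ell\bigr)^*\simeq R_k\otimes_h F^*\otimes_h R_k(1-\tfrac{2}{p})$. Next I pass to opposite operator spaces, using that the Haagerup tensor product reverses order under the opposite, $(A\otimes_h B\otimes_h C)^{\rm op}\simeq C^{\rm op}\otimes_h B^{\rm op}\otimes_h A^{\rm op}$, together with $R_k^{\rm op}=C_k$, $R_k(\theta)^{\rm op}=R_k(1-\theta)$ and $(F^*)^{\rm op}=F^{*\,\rm op}$. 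This gives that the previous space, viewed as a Banach space, coincides with $R_k(\tfrac{2}{p})\otimes_h F^{*\,\rm op}\otimes_h C_k$, since the opposite operation leaves the underlying Banach space norm unchanged. Finally, since $p'\leq 2$ and $2\bigl(1-\tfrac{1}{p'}\bigr)=\tfrac{2}{p}$, formula (\ref{2Pisier6}) identifies this last space with $S^{p'}_k\{F^{*\,\rm op}\}_\ell$, which is the desired conclusion. The case $p\leq 2$ is entirely analogous, exchanging the roles of (\ref{2Pisier5}) and (\ref{2Pisier6}): starting from $R_k(2(1-\tfrac1p))\otimes_h F\otimes_h C_k$ one dualises to $R_k(1-\tfrac{2}{p'})\otimes_h F^*\otimes_h R_k$ and then takes opposites to land on $C_k\otimes_h F^{*\,\rm op}\otimes_h R_k(\tfrac{2}{p'})\simeq S^{p'}_k\{F^{*\,\rm op}\}_\ell$.

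Two technical points deserve care. The first is the validity of the Haagerup duality in the form used: although $F$ may be infinite dimensional, the \emph{outer} factors $C_k,R_k,R_k(\theta)$ are finite dimensional, so one may peel them off one at a time (using associativity of $\otimes_h$) and apply the duality theorem in the case where one factor is finite dimensional, which holds without any reflexivity hypothesis on $F$. The second, and I expect the main obstacle, is to verify that the bilinear pairing induced by this entire chain of identifications is exactly the stated one, $(a\otimes x,c\otimes\eta)\mapsto tr(ac)\langle\eta,x\rangle$. The Haagerup duality pairing factorises across the three tensor legs as a product of the scalar pairings, so under the identification $e_i\otimes x\otimes e_j\leftrightarrow E_{ij}\otimes x$ of (\ref{2Pisier5})--(\ref{2Pisier6}) it produces $\delta$-functions in the matrix indices; these reassemble into the trace pairing because $tr(E_{ij}E_{i'j'})=\delta_{ji'}\delta_{ij'}$.

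The delicate feature to check here is the passage to opposites at the level of the pairing: the order-reversal $(A\otimes_h B\otimes_h C)^{\rm op}\simeq C^{\rm op}\otimes_h B^{\rm op}\otimes_h A^{\rm op}$ is implemented on underlying vectors by the flip $e_i\otimes x\otimes e_j\mapsto e_j\otimes x\otimes e_i$, which corresponds to a transposition $E_{ij}\mapsto E_{ji}$ on the Schatten factor. One must confirm that, after this transposition is combined with the replacement of $F^*$ by $F^{*\,\rm op}$, the resulting pairing is the bilinear trace form $tr(ac)$ (with no adjoints and no residual transpose), which is precisely why $F^{*\,\rm op}$ rather than $F^*$ occurs in the statement. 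Carrying this bookkeeping through carefully, on elementary tensors $a\otimes x$ and $c\otimes\eta$, is the step I would spend most effort on; everything else is a direct application of the quoted identifications.
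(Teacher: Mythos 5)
Your proposal is correct and follows essentially the same route as the paper's proof: the same chain of identifications via (\ref{2Pisier5})--(\ref{2Pisier6}), the self-duality of the Haagerup tensor product with $C_k^*=R_k$ and $R_k(\theta)^*=R_k(1-\theta)$, the order-reversing passage to opposites with $R_k^{\rm op}=C_k$ and $R_k(\theta)^{\rm op}=R_k(1-\theta)$, and the same ``left to the reader'' verification that the induced pairing is the stated trace pairing. The only (minor) divergence is that you deduce the $r$-identification formally from the $\ell$-case via (\ref{4Opposite}), whereas the paper runs the analogous computation directly; both are legitimate, and your handling of the two technical points (finite-dimensional outer factors justifying the duality, and the index bookkeeping under the flip) is exactly the care the paper implicitly assumes.
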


\begin{proof}
We will use the fact that if $E_1,\ldots, E_n$ are any operator
spaces, then $E_1\otimes_h\cdots\otimes_h E_n$ is isometrically
isomorphic to $E_n^{\rm op}\otimes_h \cdots\otimes_hE_1^{\rm op}$
via the linear mapping taking $x_1\otimes\cdots\otimes x_n$ to
$x_n\otimes\cdots\otimes x_1$ for any $x_1\in E_1, \ldots, x_n\in
E_n$ (see e.g. \cite[p. 97]{P2}).

We only prove the first identity in (\ref{2Pisier7}), the second
one being similar. We use the self-duality of the Haagerup tensor
product (see e.g. \cite[Th. 9.4.7]{ER2}). Assume that $p\geq 2$.
By the above observations, we have
\begin{align*}
\bigl(S^p_k\{F\}_\ell\bigr)^*\,& \simeq\, C_k^*\otimes_h
F^*\otimes_h R_k\bigl(\tfrac{2}{p}\bigr)^*\\
& \simeq\, R_k \otimes_h F^*\otimes_h
R_k\bigl(1-\tfrac{2}{p}\bigr)\\
& \simeq\, R_k\bigl(1-\tfrac{2}{p}\bigr)^{\rm op}\otimes_h F^{*\,
\rm op}\otimes_h
R_k^{\rm op}\\
& \simeq\, R_k\bigl(\tfrac{2}{p}\bigr)\otimes_h F^{*\, \rm
op}\otimes_h C_k\\
& \simeq\, S^{p'}_k\{F^{*\,\rm op}\}_\ell.
\end{align*}
Moreover it is not hard to check (left to the reader) that the
duality pairing leading to these isometric isomorphisms is the one
given in the statement.

The proof for $p\leq 2$ is similar.
\end{proof}

\begin{remark}\label{2Duality4}
Let $S^p_k[F]$ denote Pisier's operator space valued Schatten
space \cite[Chapter 1]{P0}. We recall that for any $y\in
S^p_k\otimes F$, the norm $\norm{y}_{S^p_k[F]}$  is equal to
$\inf\{\norm{c}_{2p}\minnorm{z}\norm{d}_{2p}\}$, where the infimum
runs over all $c,d\in S^{2p}_{k}$ and all $z\in M_k(F)=M_k\minten
F$ such that $y=czd$. Moreover we have
\begin{equation}\label{2Pisier8}
S^p_k[F]\simeq R_k\bigl(1-\tfrac{1}{p}\bigr)\otimes_h F\otimes_h
R_k\bigl(\tfrac{1}{p}\bigr)
\end{equation}
isometrically. Then the proof of Proposition \ref{2Duality2}
yields an isometric identification
\begin{equation}\label{2Duality3}
S^p_k[F]^*\simeq\,S^{p'}_k[F^{*\rm op}].
\end{equation}
Using transposition, the latter result is the same as \cite[Cor.
1.8]{P0}.

We finally observe that in general the identifications in
(\ref{2Pisier7}) are not completely isometric (already with
$k=1$).
\end{remark}

\bigskip
The above proposition leads to a natural  duality problem, which
turns out to be crucial for our investigations in the next two
sections. Let $1<p,p'<\infty$ be two conjugate numbers, and
consider an arbitrary semifinite von Neumann algebra
$(M,\varphi)$. For any operator space $F$, consider the duality
pairing
$$
(L^p(M) \otimes F)\times(L^{p'}(M) \otimes F^*)\longrightarrow
\Cdb
$$
defined by
\begin{equation}\label{4Duality1}
(a\otimes x,c\otimes\eta)\,\longmapsto\,
\varphi(ac)\,\langle\eta,x\rangle
\end{equation}
for any $a\in L^p(M)$, $c\in L^{p'}(M)$, $x\in F$  and  $\eta\in
F^*$. In view of Proposition \ref{2Duality2}, it is natural to
wonder whether this pairing induces an isometric embedding of
$L^{p'}\{M;F^{*\rm op}\}_\ell$ into $L^p\{M;F\}_\ell^*$. Arguing
as in the proof of \cite[Th. 4.1]{P0}, and using Proposition
\ref{2Duality2}, we may obtain that this holds true when $M$ is
hyperfinite. However it is false in general, see Remark \ref{4Rem}
(2) below. In the rest of this section we will focus on the
special case when $F=R$ and we will show a positive result in that
case.

We recall that $R^*=C$ and that $C^{\rm op}=R$, so that  $R^{*\rm
op}=R$. In Sections 4 and 5, we will use the fact that for any
$1<p<\infty$, the above pairing induces a contraction
$L^{p'}\{M;R\}_\ell\to  L^{p}\{M;R\}_\ell^*$. The next theorem is
a more precise result that we prove for the sake of completeness.

\begin{theorem}\label{4Duality}
\
\begin{itemize}
\item [(1)] For any $1<p\leq 2$, we have
$$
L^{p'}\{M;R\}_\ell\,\hookrightarrow\, L^{p}\{M;R\}_\ell^*
\qquad\hbox{isometrically}.
$$
\item [(2)] For any $2<p<\infty$, we have an isometric isomorphism
$$
L^p\{M;R\}_{\ell}^*\,\simeq\, L^{p'}\{M;R\}_\ell.
$$
\end{itemize}
\end{theorem}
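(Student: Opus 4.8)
The plan is to derive part~(1) from part~(2) and to concentrate the real work on part~(2), whose engine is the factorisation Lemma~\ref{4Factor}. Granting (2) for every exponent $>2$, fix $1<p<2$ and apply (2) to $p'>2$: this gives an isometric isomorphism $L^{p'}\{M;R\}_\ell^*\simeq L^p\{M;R\}_\ell$ implemented by the pairing~(\ref{4Duality1}), so that $L^p\{M;R\}_\ell$ is the dual of $L^{p'}\{M;R\}_\ell$. The canonical isometric embedding of $L^{p'}\{M;R\}_\ell$ into its bidual then composes with this identification to produce an isometric embedding $L^{p'}\{M;R\}_\ell\hookrightarrow L^p\{M;R\}_\ell^*$, and a routine check shows it is again induced by~(\ref{4Duality1}); this is exactly~(1). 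The endpoint $p=2$ is elementary, $L^2\{M;R\}_\ell$ being Hilbertian and self-dual under~(\ref{4Duality1}).

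For part~(2), fix $2<p<\infty$, put $p'<2$, and let $q$ be defined by $\frac1p+\frac1q=\frac12$ (so that $\frac12+\frac1q=\frac1{p'}$ governs the norm of $L^{p'}\{M;R\}_\ell$). The discussion preceding the theorem already furnishes a contraction $j\colon L^{p'}\{M;R\}_\ell\to L^p\{M;R\}_\ell^*$ induced by~(\ref{4Duality1}). I would obtain both surjectivity and the reverse norm estimate from one construction: given $\phi\in L^p\{M;R\}_\ell^*$ with $\norm{\phi}\le1$, I would produce $\xi\in L^{p'}\{M;R\}_\ell$ with $j(\xi)=\phi$ and $\norm{\xi}_{\alpha_{p'}^\ell}\le\norm{\phi}$. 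Combined with the contractivity of $j$, this forces $j$ to be a surjective isometry, proving~(2).

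To build $\xi$, I would first record the components of $\phi$: since $\norm{d\otimes e_j}_{L^p\{M;R\}_\ell}\le\norm{d}_p$ (Lemma~\ref{4Pisier3}), the map $d\mapsto\phi(d\otimes e_j)$ is a bounded functional on $L^p(M)$, so there are $c_j\in L^{p'}(M)$ with $\phi(d\otimes e_j)=\varphi(c_jd)$; set $\xi=\sum_j c_j\otimes e_j$, which agrees with $\phi$ on the dense subspace $L^p(M)\otimes R$. Testing $\phi$ on the generators $zd$ (with $z\in M\otimes R$ satisfying $\minnorm{z}\le1$ and $d\in L^p(M)$) and using $\norm{zd}_{L^p\{M;R\}_\ell}\le\norm{d}_p$ turns $\norm{\phi}\le1$ into a Cauchy--Schwarz-type control of the $c_j$. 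The decisive step is to extract from this, by a Hahn--Banach (Pietsch-type) argument, positive weights $a\in L^q(M)_+$ and $b\in L^2(M)_+$ putting the estimate in the exact shape
$$|\varphi(c_jzd)|\le\norm{da}_2\,\norm{bz}_2,\qquad z\in M,\ d\in L^p(M),$$
required by Lemma~\ref{4Factor}. Applying that lemma---most cleanly after amplifying to $\N=M\,\overline{\otimes}\,B(\ell^2)$, so that the family $(w_j)$ is recovered as a single contraction rather than one $w_j$ at a time---yields $w_j\in M$ with $c_j=aw_jb$ and, via the support condition $w=Q_awQ_b$, with $\sum_j w_jw_j^*\le1$. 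Thus $\xi=azb$ with $z=\sum_j w_j\otimes e_j$ and $\minnorm{z}\le1$, which is precisely a factorisation witnessing $\norm{\xi}_{\alpha_{p'}^\ell}\le\norm{a}_q\minnorm{z}\norm{b}_2\le\norm{\phi}$.

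The hard part is this extraction-plus-factorisation step, and it is exactly where the special structure of $R$ enters. Because $R$ contributes only a single row---equivalently, a single square function---one application of Lemma~\ref{4Factor} suffices to recover the entire middle factor; for a general operator space $F$ the middle term would be genuinely $F$-valued and could not be reconstructed from a scalar Cauchy--Schwarz estimate, which is why the duality fails in general (Remark~\ref{4Rem}). Moreover Lemma~\ref{4Factor} is purely von Neumann algebraic and holds for arbitrary $M$, so---unlike the hyperfinite argument modelled on \cite{P0} and sketched before the theorem, which relies on approximation by matrix subalgebras---the present approach does not degenerate for non-hyperfinite $M$. I expect the genuine technical difficulty to lie in the Hahn--Banach extraction of the weights $a,b$ and in the bookkeeping of support projections that confines $z$ to $M\otimes R$.
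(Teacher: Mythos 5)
There is a genuine gap, and it sits exactly at the step you call decisive. The minimax/Hahn--Banach extraction of weights $a\in L^q(M)_+$, $b\in L^2(M)_+$ followed by Lemma \ref{4Factor} is indeed the engine of the paper's proof, but the paper runs it only on a \emph{finite} tensor $y'\in V\otimes\H$, in order to show that the contraction $j$ is isometric on that dense subspace: there the recovered middle factor $z=\sum_n w_n\otimes e_n$ is a finite sum, hence lies in $M\otimes R$, and Lemma \ref{4Pisier3} (2) applies. You instead run it on an arbitrary functional $\phi$, whose component sequence $(c_j)_{j\geq 1}$ is infinite. The argument then produces $(w_j)_j$ with $\sum_j w_jw_j^*\leq 1$ only in the weak-$*$ sense, and $z=\sum_j w_j\otimes e_j$ need not belong to $M\minten R$: that would require the tail norms $\bignorm{\sum_{j\geq k}w_jw_j^*}_\infty$ to tend to $0$, which boundedness does not give (take $w_j=E_{jj}$). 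Consequently $\xi=azb$ is not exhibited as an element of $L^{p'}\{M;R\}_\ell$, which by definition is the \emph{norm} completion of $V\otimes R$; your factorisation only places $\phi$ in a weak closure of that space. The sanity check that kills the argument: nothing in your construction uses $2<p<\infty$, so it would apply verbatim at $p=2$ and ``prove'' that $L^2\{B;R\}_\ell=S^2\{R\}_\ell$ is its own dual, whereas Remark \ref{4Rem} (1) shows the dual is $B\otimes_h C$, strictly larger than $S^\infty\otimes_h C\simeq S^2\{R\}_\ell$ --- the defect being precisely weak versus norm convergence of the middle factor.

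The missing idea is the entire second half of the paper's proof of (2): surjectivity is obtained not from a factorisation of $\phi$ but from a convergence argument showing that the partial sums $\sum_{n=1}^k y'_n\otimes e_n$ of the component representation converge to $\zeta$ in dual norm, so that $\zeta$ is a norm limit of finite tensors. This rests on the interpolation identity (\ref{4duality7}), $L^p\{M;R_N\}_\ell\simeq\bigl[M\minten R_N, L^2\{M;R_N\}_\ell\bigr]_{2/p}$, which by duality yields the superadditivity (\ref{4Convexity1}) with exponent $s'$, where $\frac1s=\frac12+\frac1p$; the point, noted explicitly in the paper, is that $s'<\infty$ precisely because $p>2$, and that finiteness together with (\ref{4Convexity2}) is what forces the tails to vanish (at $p=2$ one gets $s'=\infty$, consistent with the failure of surjectivity there). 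Two further corrections: your reduction of (1) to (2) works for $1<p<2$ granted an independent proof of (2), but it misses $p=2$, and your justification there is false --- $L^2\{M;R\}_\ell$ is neither Hilbertian nor self-dual, by the same Remark \ref{4Rem} (1); the $p=2$ case of (1) needs the finite-tensor isometry argument, i.e.\ exactly the part of the machinery (Lemma \ref{4Convexity}, minimax, Haagerup's normal-part trick, Lemma \ref{4Factor}) that you did correctly reconstruct. Note finally that the paper's logic runs in the opposite direction from yours: it first proves the isometric embedding (\ref{4Duality3}) for all $2\leq p<\infty$, deduces the finite-dimensional duality (\ref{4Duality6}) and hence part (1) by reflexivity and density, and only then proves surjectivity for $p>2$.
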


\bigskip
In the sequel we let $(e_n)_{n\geq 1}$ denote the canonical basis
of $R$ and we recall that for any finite sequence $(z_n)_n$ in
$M$, we have
$$
\Bignorm{\sum_n z_n\otimes e_n}_{M\minten R}\,=\,\Bignorm{\sum_n
z_nz_n^*}_\infty^{\frac{1}{2}}.
$$

\begin{lemma}\label{4Convexity}
Let $2\leq p<\infty$. For any finite families $(d_j)_j$ in
$L^p(M)$  and $(z_{nj})_{n,j}$ in $M$, we have
$$
\Bignorm{\sum_{n,j} z_{nj}d_j\otimes
e_n}_{L^p\{M;R\}_\ell}\,\leq\, \Bignorm{\Bigl(\sum_j
d_j^{*}d_j\Bigr)^{\frac{1}{2}}}_p\, \Bignorm{\sum_{n,j}
z_{nj}z_{nj}^*}_\infty^{\frac{1}{2}}.
$$
\end{lemma}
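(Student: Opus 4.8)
The plan is to collapse the doubly-indexed family $(d_j)_j$ into a single right factor and then invoke Lemma~\ref{4Pisier3}(1). Set $b=\bigl(\sum_j d_j^*d_j\bigr)^{1/2}\in L^p(M)_+$, so that the first factor on the right-hand side is precisely $\norm{b}_p$. If I can produce $v_j\in M$ with $d_j=v_jb$ and $\sum_j v_j^*v_j\le 1$, then, setting
\[
w=\sum_n\Bigl(\sum_j z_{nj}v_j\Bigr)\otimes e_n\in M\otimes R,
\]
I obtain $y=wb$ in the product notation introduced before Lemma~\ref{4Pisier3}, whence that lemma gives $\norm{y}_{L^p\{M;R\}_\ell}\le\minnorm{w}\,\norm{b}_p$. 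It then remains only to check that $\minnorm{w}\le\norm{\sum_{n,j}z_{nj}z_{nj}^*}_\infty^{1/2}$.

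For the column factorisation I would pass to $M_N(M)$, where $N$ is the number of indices $j$, and consider the single column $D=\sum_j d_j\otimes E_{j1}\in L^p(M_N(M))$. A direct computation gives $D^*D=b^2\otimes E_{11}$, hence $\vert D\vert=b\otimes E_{11}$, and the polar decomposition $D=V\vert D\vert$ in $L^p(M_N(M))$ produces a partial isometry $V=[V_{ij}]\in M_N(M)$ with $V^*V$ equal to the support projection $Q_b\otimes E_{11}$. Reading off the first column yields $d_j=V_{j1}b$, so I set $v_j=V_{j1}$; then $\sum_j v_j^*v_j=(V^*V)_{11}=Q_b\le 1$, which is the sharp bound I need.

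To estimate $\minnorm{w}$ I would realise $M\minten R$ spatially, say $M\subset B(H)$ and $R\subset B(\ell^2)$ with $e_n=E_{1n}$, so that $M\minten R\hookrightarrow B(H\otimes\ell^2)$ completely isometrically and is an $M$-bimodule. Writing $\zeta_j=\sum_n z_{nj}\otimes e_n\in M\minten R$ and viewing $\zeta=(\zeta_j)_j$ as a row and $u=(v_j)_j$ as a column over this operator space, one has $w=\zeta u$, so the elementary row-times-column estimate for operators gives $\minnorm{w}\le\norm{\zeta}_{\rm row}\,\norm{u}_{\rm col}$. Here $\norm{u}_{\rm col}=\norm{\sum_j v_j^*v_j}_\infty^{1/2}\le 1$ by the previous step, while $\zeta_j\zeta_j^*=\bigl(\sum_n z_{nj}z_{nj}^*\bigr)\otimes E_{11}$ gives $\norm{\zeta}_{\rm row}=\norm{\sum_j\zeta_j\zeta_j^*}_\infty^{1/2}=\norm{\sum_{n,j}z_{nj}z_{nj}^*}_\infty^{1/2}$. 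Combining the three estimates yields the claimed inequality.

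The only genuinely delicate point is the column factorisation with the sharp normalisation $\sum_j v_j^*v_j\le 1$: a naive term-by-term Douglas factorisation would only give $\norm{v_j}\le 1$, which is too weak to decouple the estimate, and attempting a single common-factor decomposition directly inside $M$ would create cross terms $v_jv_k^*$ that spoil the clean square function $\sum_{n,j}z_{nj}z_{nj}^*$. Passing to the single column $D$ in $L^p(M_N(M))$ and using one polar decomposition is exactly what controls all the $v_j$ simultaneously through $V^*V=Q_b\otimes E_{11}$. Everything else is bookkeeping with the spatial representation of $M\minten R$ and with the hypothesis $p\ge 2$, which is needed both to ensure $b\in L^p(M)$ and for the validity of Lemma~\ref{4Pisier3}(1).
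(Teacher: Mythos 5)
Your proof is correct and follows essentially the same route as the paper's: factor each $d_j$ as $v_jb$ with $\sum_j v_j^*v_j\leq 1$, absorb the $v_j$ into the coefficients $z_{nj}$, bound the resulting element of $M\minten R$ by operator Cauchy--Schwarz, and finish with Lemma \ref{4Pisier3} (1). The only real difference is in how the factorisation is manufactured: the paper obtains its $w_j$ one at a time via Douglas factorisation from $d_j^*d_j\leq d^2$ (with $w_jQ=w_j$) and then verifies $\sum_j w_j^*w_j=Q$ by a direct range/kernel argument, whereas you get all the $v_j$ simultaneously from a single polar decomposition of the column $D$ in $L^p(M_N(M))$ --- both yield exactly the same decomposition, so this is a cosmetic (if tidy) variant rather than a different method.
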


\begin{proof} We suppose that $M\subset B(H)$ as before. Let
$d=\Bigl(\sum_j d_j^{*}d_j\Bigr)^{\frac{1}{2}}$ and let $Q$ be its
support projection. For any $j$, we have $0\leq d_j^{*}d_j\leq
d^{2}$ hence there exists a (necessarily unique) $w_j\in M$ such
that
$$
w_jd=d_j\qquad\hbox{and}\qquad w_jQ=w_j.
$$
Then we have
$$
d^{2}\,=\,\sum_j d_j^{*}d_j\,=\,d\Bigl(\sum_j
w_j^{*}w_j\Bigr)d\qquad\hbox{and}\qquad Q\Bigl(\sum_j
w_j^{*}w_j\Bigr)Q\,=\, \sum_j w_j^{*}w_j.
$$
This readily implies that $\sum_j w_j^{*}w_j =Q$. Indeed, these
two bounded operators coincide on the range of $d$ and on the
kernel of $Q$. In particular, we have
$$
\Bignorm{\sum_j w_j^{*}w_j}_\infty\leq 1.
$$

Let $g_1,\ldots,g_n,\ldots$ and $h$ be elements of $H$. Then
$$
\sum_n\Bigl\langle\Bigl(\sum_j
z_{nj}w_j\Bigr)g_n,h\Bigr\rangle\,=\, \sum_{n,j}\bigl\langle
w_j(g_n), z_{nj}^{*}(h)\bigr\rangle.
$$
Hence by Cauchy-Schwarz, we have
\begin{align*}
\Bigl\vert \sum_n\Bigl\langle\Bigl(\sum_j
z_{nj}w_j\Bigr)g_n,h\Bigr\rangle \Bigr\vert\, &\leq\,
\Bigl(\sum_{n,j}\norm{w_j(g_n)}^{2}\Bigr)^{\frac{1}{2}}\,
\Bigl(\sum_{n,j}\norm{z_{nj}^{*}(h)}^{2}\Bigr)^{\frac{1}{2}}\\
&\leq\,\Bignorm{\sum_j w_j^{*}w_j}_\infty^{\frac{1}{2}}\,
\Bigl(\sum_{n}\norm{g_n}^{2}\Bigr)^{\frac{1}{2}}\,
\Bignorm{\sum_{n,j} z_{nj} z_{nj}^{*}}_\infty^{\frac{1}{2}}\,\norm{h}\\
&\leq\, \Bignorm{\sum_{n,j} z_{nj}
z_{nj}^{*}}_\infty^{\frac{1}{2}}\,
\Bigl(\sum_{n}\norm{g_n}^{2}\Bigr)^{\frac{1}{2}}\,\norm{h}.
\end{align*}
For any $n\geq 1$, let
$$
z'_n\,=\,\sum_j z_{nj} w_j.
$$
The above calculation shows that
$$
\Bignorm{\sum_n z'_n\otimes e_n}_{M\minten R}\,\leq\,
\Bignorm{\sum_{n,j} z_{nj} z_{nj}^{*}}_\infty^{\frac{1}{2}}.
$$
Moreover we have
$$
\Bignorm{\sum_{n,j} z_{nj}d_j\otimes e_n}_{L^p\{M;R\}_\ell}\,=\,
\Bignorm{\Bigl(\sum_{n} z'_{n}\otimes
e_n\Bigr)d}_{L^p\{M;R\}_\ell} \,\leq\,\norm{d}_p\,\Bignorm{\sum_n
z'_n\otimes e_n}_{M\minten R}
$$
by Lemma \ref{4Pisier3} (1). The result follows at once.
\end{proof}

\begin{proof} (Of Theorem \ref{4Duality}.)
The first step of the proof will consist in showing that for any
$2\leq p<\infty$, we have
\begin{equation}\label{4Duality3}
L^{p'}\{M;R\}_\ell\,\subset\, L^{p}\{M;R\}_\ell^*
\qquad\hbox{isometrically}.
\end{equation}
We let $V=V(M)$ be given by (\ref{4V}) and we let $\H\subset R$ be
the linear span of the $e_n$'s. By Lemma \ref{4Pisier3} (3),
$V\otimes \H$ is both dense in $L^p\{M;R\}_\ell$ and
$L^{p'}\{M;R\}_\ell$. In the sequel we regard $V\otimes \H$ as the
space of finite sequences in $V$. Indeed we identify such a
sequence $(y_n)_n$ with $\sum_{n\geq 1} y_n\otimes e_n$.

We let $q\geq 2$ such that $\frac{1}{2}+\frac{1}{q}=\frac{1}{p'}$.
Equivalently,
$$
\frac{1}{q} +\frac{1}{p}=\frac{1}{2}\,.
$$
Let $y=(y_n)_n$ and $y'=(y_n')_n$ in $V\otimes \H$. Let $c,d\in V$
and let $(z_n)_n$ be a sequence of $M$ such that $y_n=cz_nd$ for
any $n\geq 1$. Likewise, let $a,b\in V$ and let $(z_n')_n$ be a
sequence of $M$ such that $y_n'=az_n'b$ for any $n\geq 1$. The
duality pairing $\langle y,y'\rangle$ from (\ref{4Duality1}) is
given by
$$
\langle y,y'\rangle\,=\,\sum_{n}\varphi(y_n
y'_n)\,=\,\sum_{n}\varphi(cz_nda z_n'b)\,=\,
\sum_{n}\varphi(bcz_nda z_n').
$$
By Cauchy-Schwarz, we deduce that
\begin{align*}
\bigl\vert \langle y,y'\rangle\bigr\vert \, & \leq\, \sum_{n}
\bigl\vert\varphi(bcz_ndaz_n')\bigr\vert\,\leq \sum_{n}
\norm{bcz_n}_2\norm{daz_n'}_2\\
& \leq\, \Bigl(\sum_{n} \norm{bcz_n}_2^2\Bigr)^{\frac{1}{2}}\,
\Bigl(\sum_{n} \norm{daz_n'}_2^2\Bigr)^{\frac{1}{2}}.
\end{align*}
Moreover we have
$$
\sum_{n}\norm{bcz_n}_2^2\,= \sum_{n}
\varphi(bcz_nz_n^*c^*b^*)\,=\varphi\Bigl(bc\Bigl( \sum_n
z_nz_n^*\Bigr)c^*b^*\Bigr)\,\leq\norm{c}_\infty^2\norm{b}_2^2
\,\Bignorm{ \sum_n z_nz_n^*}_\infty.
$$
Likewise,
$$
\sum_{n}\norm{daz_n'}_2^2\,\leq\norm{da}_2^2\,\Bignorm{ \sum_n
z_n'{z_n'}^*}_\infty,
$$
and hence
$$
\sum_{n\geq
1}\norm{daz_n'}_2^2\,\leq\norm{d}_p^2\norm{a}_q^2\,\Bignorm{
\sum_n z_n'{z_n'}^*}_\infty.
$$
All together we deduce that
$$
\bigl\vert \langle y ,y'\rangle\bigr\vert \,\leq\, \norm{d}_p
\norm{a}_q \norm{c}_\infty\norm{b}_2\, \Bignorm{\sum_n z_n\otimes
e_n}_{M\minten R}\Bignorm{\sum_n z_n'\otimes e_n}_{M\minten R}.
$$
Passing to the infimum over all possible $a,b,c,d\in V$ and $z_n,
z_n'$ in $M$ as above, we deduce that
$$
\bigl\vert \langle y,y'\rangle\bigr\vert
\,\leq\,\norm{y}_{L^p\{M;R\}_\ell}\,
\norm{y'}_{L^{p'}\{M;R\}_\ell}.
$$
This shows that the duality pairing (\ref{4Duality1}) for $F=R$
induces a contraction
$$
L^{p'}\{M;R\}_\ell\longrightarrow L^p\{M;R\}_\ell^*.
$$

To show that this contraction is actually an isometry, we let
$y'=(y'_n)_n$ in $V\otimes \H$, we let $\zeta\colon
L^p\{M;R\}_{\ell}\to\Cdb$ be the corresponding functional and we
assume that $\norm{\zeta}\leq 1$. According to Lemma
\ref{4Convexity} we have
\begin{align*}
\Bigl\vert\sum_{n,j} \varphi (y'_n z_{nj} d_j)\,\Bigr\vert\,
&=\,\Bigl\vert \Bigl\langle \zeta,  \sum_{n,j} z_{n j}d_j\otimes
e_n\Bigr\rangle\Bigr\vert\\
& \leq\, \Bignorm{\Bigl(\sum_j d_j^{*}d_j\Bigr)^{\frac{1}{2}}}_p\,
\Bignorm{\sum_{n,j} z_{nj}z_{nj}^*}_\infty^{\frac{1}{2}}
\end{align*}
for any finite families $(d_j)_j$ in $L^p(M)$ and $(z_{nj})_{n,j}$
in $M$. Multiplying each $z_{nj}$ by an appropriate complex number
of modulus one, we deduce that
$$
\sum_{n,j} \bigl\vert \varphi(y'_n  z_{nj} d_j) \bigr\vert\, \leq
\, \Bignorm{\Bigl(\sum_j d_j^{*}d_j\Bigr)^{\frac{1}{2}}}_p\,
\Bignorm{\sum_{n,j} z_{nj}z_{nj}^*}_\infty^{\frac{1}{2}}.
$$
Note that $\frac{q}{2}$  is the conjugate number of $\frac{p}{2}$
and let $K_1$ be the positive part of the unit ball of
$L^\frac{q}{2}(M)$, equipped with the $\sigma(L^\frac{q}{2}(M),
L^{\frac{p}{2}}(M))$-topology. Likewise, let $K_2$ be the positive
part of the unit ball of $M^*$, equipped with the $w^*$-topology.
Since $\norm{d}_p^2=\norm{d^*d}_{\frac{p}{2}}$ for any $d\in
L^p(M)$, it follows from above that for any $(d_j)_j$ in $L^p(M)$
and any $(z_{nj})_{n,j}$ in $M$, we have
$$
2\,\sum_{n,j} \bigl\vert \varphi(y'_n  z_{nj} d_j) \bigr\vert\,
\leq \, \sup_{A\in K_1} \varphi\Bigl(\Bigl(\sum_j d_j^*d_j\Bigr)
A\Bigr) \,+\,\sup_{B\in K_2}\Bigl\langle B,\sum_{n,j}
z_{nj}z_{nj}^*\Bigr\rangle.
$$
Since $K_1$ and $K_2$ are compact, we deduce from \cite[Lemma
2.3.1]{ER2} (minimax principle) that there exist $A\in K_1$ and
$B\in K_2$ such that
$$
2\,\sum_{n,j} \bigl\vert \varphi(y'_n  z_{nj} d_j) \bigr\vert\,
\leq \, \varphi\Bigl(\Bigl(\sum_j d_j^*d_j\Bigr) A\Bigr) \,+\,
\Bigl\langle B,\sum_{n,j} z_{nj}z_{nj}^*\Bigr\rangle
$$
for any $d_j$ and $z_{nj}$ as above. Using the classical identity
$2st=\inf_{\delta>0} \delta t^2 + \delta^{-1}s^2$ for nonnegative
real numbers $s,t\geq 0$, we finally deduce that
\begin{equation}\label{4Ineq}
\sum_{n} \bigl\vert \varphi(y'_n  z_n d)\bigr\vert\, \leq
\,\varphi( d^*d A)^{\frac{1}{2}}\,\Bigl\langle B,\sum_n z_n
z_n^*\Bigr\rangle^{\frac{1}{2}},\qquad d\in L^p(M),\ z_n\in M.
\end{equation}
We now argue as in the proof of  \cite[Prop. 2.3]{H} to show that
$B$ may be replaced by its normal part in the above estimate. Let
$B_{\rm sing}$ be the singular part of $B$. It is shown in
\cite{H} that there is an increasing net $(e_t)_t$ of projections
in $M$ converging to $1$ in the $w^{*}$-topology, such that
$B_{\rm sing}(e_t)=0$ for any $t$. This implies that
$$
\Bigl\langle B_{\rm sing},\sum_n (e_tz_n) (e_tz_n)^*\Bigr\rangle\,
=\, \Bigl\langle  B_{\rm sing} ,e_t\Bigl(\sum_n z_n z_n^*\,\Bigr)
e_t\Bigr\rangle\, =0.
$$
Since $\varphi(y'_n  z_n d)=\lim_t\varphi(y'_n  e_tz_n d)$, this
implies that (\ref{4Ineq}) holds true with $B-B_{\rm sing}$
instead of $B$.

Thus we may assume that $B$ is normal, and we regard it as an
element of $L^1(M)_+$. Let $b=B^{\frac{1}{2}}\in L^2(M)_+$ be its
square root. For any $z_1,\ldots, z_n,\ldots$ in $M$, we have
$$
\Bigl\langle B,\sum_n z_nz_n^*\Bigr\rangle = \sum_n \varphi( b^2
z_nz_n^*)=\sum_n\norm{bz_n}_2^2.
$$
Likewise if we let $a=A^{\frac{1}{2}}\in L^q(M)_+$, then we have
$\varphi( d^*d A)=\norm{da}_2^2$ for any $d\in L^p(M)$.
Consequently, we have
$$
\sum_{n} \bigl\vert \varphi(y'_n z_n d) \bigr\vert\, \leq
\,\norm{da}_2\,\Bigl(\sum_n \norm{bz_n}_2^{2}
\Bigr)^{\frac{1}{2}},\qquad d\in L^p(M),\ z_n\in M.
$$
Applying Lemma \ref{4Factor} to each $y'_n$, we deduce that there
is a finite sequence $(w_n)_n$ in $M$ such that $y'_n=aw_nb$ and
$w_n=Q_aw_nQ_b$ for any $n\geq 1$, where $Q_a$ and $Q_b$ denote
the support projections of $a$ and $b$, respectively. Since
$L^p(M) a$ is dense in $L^2(M)Q_a$, and $b M$ is dense in $Q_b
L^2(M)$ (see Lemma \ref{4support}), the above estimate yields
$$
\Bigl\vert\varphi\Bigl(\sum_{n} w_n g_n
h\Bigr)\Bigr\vert\,\leq\,\norm{h}_2\, \Bigl(\sum_{n}
\norm{g_n}_2^2\Bigl)^{\frac{1}{2}},\qquad h\in L^2(M)Q_a,\ g_n\in
Q_b L^2(M).
$$
Since  $w_n=Q_aw_nQ_b$ this implies that
$$
\Bigl\vert\varphi\Bigl(\sum_{n} w_n g_n
h\Bigr)\Bigr\vert\,\leq\,\norm{h}_2\,
\Bigl(\sum_{n}\norm{g_n}_2^2\Bigl)^{\frac{1}{2}},\qquad h\in
L^2(M),\ g_n\in L^2(M).
$$
Regarding $M\subset B(L^2(M))$ in the usual way, we deduce that
$\bignorm{\sum_{n}w_n w_n^*}_{\infty}\leq 1$. Appealing to Lemma
\ref{4Pisier3} (2), this proves that
$\norm{y'}_{L^{p'}\{M;R\}_\ell}\leq 1$, and concludes the proof of
(\ref{4Duality3}).

The latter intermediate result implies that for any $2\leq
p<\infty$, we have
\begin{equation}\label{4Duality6}
L^p\{M;R_N\}_{\ell}^*\,\simeq\, L^{p'}\{M;R_N\}_\ell
\end{equation}
for any integer $N\geq 1$. Since the above spaces are reflexive,
this implies that (\ref{4Duality6}) actually holds true for any
$1<p<\infty$. In turn this implies that (\ref{4Duality3}) holds
true for any $1<p<\infty$, because $V\otimes\H$ is dense in
$L^{p'}\{M;R\}_\ell$. In particular we obtain part (1) of the
theorem.

\bigskip
We now turn to the proof of (2), which will consist in showing
that for $2<p<\infty$, the isometry given by (\ref{4Duality3}) is
onto. Note that according to (\ref{4Inter}), we have
\begin{equation}\label{4Interbis}
L^p(M)=[M,L^2(M)]_{\theta},
\end{equation}
where $\theta=\frac{2}{p}$. We will now check that for any integer
$N\geq 1$, we have
\begin{equation}\label{4duality7}
L^p\{M;R_N\}_\ell\,\simeq\,\bigl[M\minten R_N, L^2\{M;R_N
\}_\ell\bigr]_{\theta}\qquad\hbox{isometrically}.
\end{equation}
For that purpose, let $y\in V\otimes R_N$ and let
$\norm{y}_{\theta}$ denote its norm in the above  interpolation
space.

Assume that $\norm{y}_{\alpha_p^\ell}<1$. There exist $c,d\in V$
and $z\in M\otimes R_N$ such that $y=czd$, $\norm{z}_{\rm min}<1$,
$\norm{c}_\infty<1$ and $\norm{d}_p<1$. Consider the strip
$$
\Sigma=\{\lambda\in\Cdb\, :\, 0< {\rm Re}(\lambda)< 1\}.
$$
According to (\ref{4Interbis}), there exists a continuous function
$D\colon \overline{\Sigma}\to M+L^2(M)$ whose restriction to
$\Sigma$ is analytic, such that $D(\theta)=d$, the functions
$t\mapsto D(it)$ and $t\mapsto D(1+it)$ belong to $C_0(\Rdb;M)$
and $C_0(\Rdb;L^2(M))$ respectively, and such that
$\norm{D(it)}_\infty <1$ and $\norm{D(1+it)}_2<1$ for any
$t\in\Rdb$. We define
$$
f\colon \overline{\Sigma}\longrightarrow M\minten R_N +
L^2\{M;R_N\}_\ell
$$
by letting
$$
f(\lambda)\, =\, cz D(\lambda),\qquad \lambda\in
\overline{\Sigma}.
$$
Then $f$ is continuous, its restriction to $\Sigma$ is analytic
and we have $f(\theta)=y$. Moreover the functions $t\mapsto f(it)$
and $t\mapsto f(1+it)$ belong to $C_0(\Rdb;M\minten R_N)$ and
$C_0(\Rdb;L^2\{M;R_N\}_\ell)$, respectively. Further for any
$t\in\Rdb$ we have
$$
\norm{f(1+it)}_{\alpha_2^\ell}\leq
\norm{c}_\infty\minnorm{z}\norm{D(1+it)}_{2} <1
$$
by Lemma \ref{4Pisier3} (1). Also we have $\minnorm{f(it)}<1$ for
any $t\in\Rdb$, hence $\norm{y}_{\theta}<1$.

Assume conversely that $\norm{y}_{\theta}<1$ and write
$y=(y_1,\ldots,y_N)$. Thus there is an $N$-tuple $(f_1,\ldots,
f_N)$ of continuous functions from $\overline{\Sigma}$ into
$M+L^2(M)$ such that $f_n(\theta)=y_n$ and $f_{n\vert\Sigma}$ is
analytic for any $n=1,\ldots,N$, and such that
$$
\Bignorm{\sum_{n=1}^N f_n(it)\otimes e_n}_{M\minten R_N}
<1\qquad\hbox{and}\qquad \Bignorm{\sum_{n=1}^N f_n(1+it)\otimes
e_n}_{L^2\{M;R_N\}_\ell}<1
$$
for any $t\in\Rdb$. Let $a ,b\in V$ and  $z'_1,\ldots z'_N$ in $M$
such that
$$
\Bignorm{\sum_{n=1}^N z'_n\otimes e_n}_{M\minten R_N}<1,\quad
\norm{a}_q<1,\quad\hbox{and}\quad\norm{b}_2<1.
$$
Since $[L^2(M),M]_\theta=L^q(M)$, there is a continuous function
$A\colon\overline{\Sigma}\to M+L^2(M)$ whose restriction to
$\Sigma$ is analytic, such that $A(\theta)=a$ and for any
$t\in\Rdb$, $\norm{A(it)}_2 <1$ and $\norm{A(1+it)}_\infty <1$.
Consider $F\colon\overline{\Sigma}\to \Cdb$ defined by
$$
F(\lambda)\,=\,\sum_{n=1}^N\varphi\bigl( A(\lambda) z'_n b
f_n(\lambda)\bigr),\qquad \lambda\in\overline{\Sigma}.
$$
Then $F$ is a well-defined continuous function, whose restriction
to $\Sigma$ is analytic. For any $t\in\Rdb$, we have
$$
\vert F(1+it)\vert\,\leq \Bignorm{\sum_{n=1}^N f_n(1+it)\otimes
e_n}_{L^2\{M;R_N\}_\ell}\,\Bignorm{\sum_{n=1}^N A(1+it)z'_n
b\otimes e_n}_{L^2\{M;R_N\}_\ell},
$$
by the first part of the proof of this theorem. Thus $\vert
F(1+it)\vert<1$. Likewise, we have
$$
\vert F(it)\vert\,\leq \Bignorm{\sum_{n=1}^N f_n(it)\otimes
e_n}_{M\minten R_N}\,\Bignorm{\sum_{n=1}^N A(it)z'_n b\otimes
e_n}_{L^1\{M;R_N\}_\ell}<1
$$
for any $t\in\Rdb$. It therefore follows from the three lines
lemma that $\vert F(\theta)\vert<1$. Since
$$
F(\theta)\,=\,\sum_{n=1}^{N}\varphi\bigl(a z'_n b y_n\bigr)
$$
is the action of $y$ on $\sum_{n=1}^N az'_n b\otimes e_n$, this
shows that the norm of $y$ as an element of
$L^{p'}\{M;R_N\}_\ell^*$ is $\leq 1$. By (\ref{4Duality6}), this
means that $\norm{y}_{\alpha_p^\ell}\leq 1$.

\bigskip
We will conclude our proof of (2) by adapting some ideas from
\cite[Chapter 1]{P0}. We momentarily fix two integers $1<k<m$ and
we let $P\colon R_m\to R_m$ be the orthogonal projection onto
$R_k= {\rm Span}\{e_1,\ldots,e_k\}$. We let $\overline{P} =
I_{V}\otimes P$ on $V\otimes R_m$.  For any $y\in V\otimes R_m$,
we have
$$
\minnorm{y}\,\leq\,\bigl(\minnorm{\overline{P}(y)}^2\, +\,
\minnorm{(I-\overline{P})(y)}^2\bigr)^{\frac{1}{2}}.
$$
Indeed this assertion simply means that for any $y_1,\ldots, y_m$
in $M$, we have
$$
\Bignorm{\sum_{n=1}^m y_n y_n^*}^{\frac{1}{2}}\,\leq \Bigl(
\Bignorm{\sum_{n=1}^{k} y_n y_n^*} + \Bignorm{\sum_{n = k+1}^m y_n
y_n^*}\Bigr)^{\frac{1}{2}}.
$$
Moreover it is plain that
$$
\norm{y}_{\alpha_2^\ell}\,\leq\,\norm{\overline{P}(y)}_{\alpha_2^\ell}\,
+\, \norm{(I-\overline{P})y}_{\alpha_2^\ell}.
$$
Recall that $2<p<\infty$ and let $s>1$ be defined by $\frac{1}{s}
= \frac{1}{2} +\frac{1}{p}$. By interpolation, using
(\ref{4duality7}),  we deduce from above that the (well defined)
linear mapping
$$
(V\otimes R_k) \oplus (V\otimes [R_m\ominus R_{k}])\longrightarrow
V\otimes R_m
$$
taking any $(\overline{P}(y), y-\overline{P}(y))$ to $y$ extends
to a contraction
$$
 L^{p}\{M;R_k\}_\ell\, {\mathop{\oplus}\limits^s}\,
L^{p}\{M;R_{m}\ominus R_{k}\}_\ell\,\longrightarrow\,
L^{p}\{M;R_m\}_\ell.
$$
By (\ref{4Duality6}) its adjoint is a contraction
$$
L^{p'}\{M;R_m\}_\ell\,\longrightarrow\, L^{p'}\{M;R_k\}_\ell
\,{\mathop{\oplus}\limits^{s'}}\, L^{p'}\{M;R_{m}\ominus
R_{k}\}_\ell
$$
and this adjoint maps any $y'\in V\otimes R_m$ to the pair
$(\overline{P}(y'), y'-\overline{P}(y'))$.

We deduce that for any finite family $(y'_1,\ldots, y'_m)$ in
$L^{p'}(M)$ and any $1<k<m$, we have
\begin{equation}\label{4Convexity1}
\Bignorm{\sum_{n=1}^k y'_n\otimes e_n}^{s'}_{\alpha_{p'}^\ell}\,
+\, \Bignorm{\sum_{n=k+1}^m y'_n\otimes
e_n}^{s'}_{\alpha_{p'}^\ell}\,\leq \Bignorm{\sum_{n=1}^m
y'_n\otimes e_n}^{s'}_{\alpha_{p'}^\ell}.
\end{equation}
(It should be observed that $s'$ is finite.) Let $\zeta\in
L^{p}\{M;R\}_\ell^*$. For any integer $n\geq 1$, let
$\zeta_n\colon L^p(M)\to \Cdb$ be defined by $\zeta_n(y) =
\zeta(y\otimes e_n)$. Then $\zeta_n$ is represented by some
$y'_n\in L^{p'}(M)$, and it is easy to show, using the density of
$V\otimes \cup_m R_m$ in $L^{p}\{M;R\}_\ell$,  that
\begin{equation}\label{4Convexity2}
\norm{\zeta}_{L^{p}\{M;R\}_\ell^*}\,=\,
\lim_{m\to\infty}\Bignorm{\sum_{n=1}^m y'_n \otimes
e_n}_{\alpha_{p'}^\ell}.
\end{equation}
Letting $m\to\infty$ in (\ref{4Convexity1}), we deduce that
$$
\Bignorm{\sum_{n=1}^k y'_n\otimes e_n}^{s'}_{\alpha_{p'}^\ell}\,
+\, \Bignorm{\zeta - \sum_{n=1}^k y'_n\otimes
e_n}^{s'}_{L^p\{M,R\}_\ell^*}\,\leq\,\norm{\zeta}^{s'}_{L^p\{M,R\}_\ell^*}
$$
for any $k\geq 1$. Using (\ref{4Convexity2}) again, this implies
that
$$
\bignorm{\zeta - \sum_{n=1}^k y'_n\otimes
e_n}_{L^p\{M,R\}_\ell^*}\,\longrightarrow\, 0
$$
when $k\to\infty$. Thus $\zeta$ belongs to the closure of
$L^{p'}(M)\otimes R$, hence $\zeta\in L^{p'}\{M;R\}_\ell$.
\end{proof}

\begin{remark}\label{4Rem}

\

(1) The isometric embedding in Theorem \ref{4Duality} (1) is not
surjective in general. Indeed let $B=B(\ell^2)$ and set
$S^2\{R\}_\ell =L^{2}\{B;R\}_\ell$. As  in (\ref{2Pisier5}), we
have
\begin{equation}\label{4Rem1}
S^2\{R\}_\ell  \simeq C\otimes_h R\otimes_h C
\end{equation}
and passing to the opposite structures, this yields
$$
S^2\{R\}_\ell  \simeq R\otimes_h C\otimes_h R.
$$
Regard $S^{1}=B_*$ as the predual operator space of $B$. By
well-known computations, we deduce that $S^2\{R\}_\ell \simeq
S^{1}\otimes_h R$ and that $S^2\{R\}_\ell^{*} \simeq  B\otimes_h
C$. On the other hand, $S^2\{R\}_\ell\simeq S^\infty\otimes_h C$
by (\ref{4Rem1}). Hence the embedding of $S^2\{R\}_\ell$ into its
dual corresponds to  $\iota\otimes I_C$, where $\iota\colon
S^\infty\hookrightarrow B$ is the canonical embedding of the
compact operators into the bounded operators.

Likewise for any $1<p\leq 2$, the embedding of $S^{p'}\{R\}_\ell$
into $S^{p}\{R\}_\ell^*$ corresponds to
$$
\iota \otimes I_{R\bigl(\tfrac{2}{p'}\bigr)}\colon
S^\infty\otimes_h R\bigl(\tfrac{2}{p'}\bigr)\hookrightarrow
B\otimes_h R\bigl(\tfrac{2}{p'}\bigr).
$$

\smallskip (2) Let $F$ be an operator space, let $1<p<\infty$ and
suppose that
\begin{equation}\label{4Rem2}
L^{p'}\{M;F^{*\rm op}\}_\ell\,\longrightarrow\,
L^{p}\{M;F\}_{\ell}^{*}\quad\hbox{contractively}.
\end{equation}
Then we also have
\begin{equation}\label{4Rem3}
M\minten F^{*\rm op}\,\longrightarrow\,
L^{1}\{M;F\}_{\ell}^{*}\quad\hbox{contractively}.
\end{equation}
Indeed assume that $p\geq 2$, and let $2< q\leq\infty$ such that
$\frac{1}{p}+\frac{1}{q}=\frac{1}{2}$. Let $w\in M\otimes F^{*\rm
op}$ and let $y\in V\otimes F$ with $\norm{y}_{\alpha_1^\ell}<1$.
Then we can write $y=azb$ for some $a,b\in V$ and some $z\in
M\otimes F$ such that $\norm{a}_2<1$, $\norm{b}_2<1$ and
$\minnorm{z}<1$. Let us factorise $a$ and $b$ in the form
$a=a_1a_2$ and $b=b_1b_2$, with $a_1,a_2,b_1,b_2\in V$ verifying
$\norm{a_1}_2<1, \norm{a_2}_\infty<1,\norm{b_1}_p<1,
\norm{b_2}_q<1$. It is plain that
$$
\langle w,y\rangle=\langle w,azb\rangle=\langle
b_2wa_1,a_2zb_1\rangle.
$$
Hence by our assumption, we have
\begin{align*}
\vert \langle w,y\rangle\vert\,&\leq\,
\norm{b_2wa_1}_{L^{p'}\{M;F^{*\rm op}\}_\ell}\,
\norm{a_2zb_1}_{L^{p}\{M;F\}_\ell}\\ &\leq\,\norm{a_1}_2
\norm{a_2}_\infty\norm{b_1}_p\norm{b_2}_q\minnorm{w}\minnorm{z}\leq
\minnorm{w}.
\end{align*}
This shows (\ref{4Rem3}). It is a well-known consequence of
Haagerup's characterization of injectivity \cite{H2} that if the
von Neumann algebra $M$ is not injective, then (\ref{4Rem3}) does
not hold true for $F=\ell^{\infty}$. The above argument shows that
for any $1<p<\infty$, (\ref{4Rem2}) cannot hold true either in
this case.

\smallskip (3) Using a standard approximation argument, we deduce
from (\ref{4duality7}) that for any $p\geq 2$,
$$
L^p\{M;R\}_\ell\,\simeq\,\bigl[M\minten R, L^2\{M;R
\}_\ell\bigr]_{\frac{2}{p}}\qquad\hbox{isometrically}.
$$
Also, slightly modifying our arguments in the proof of Theorem
\ref{4Duality}, one can show that
$$
M\minten R_N \,\simeq\, L^1\{M;R_N\}_\ell^*
$$
for any $N\geq 1$. Details are left to the reader.

\smallskip (4) Lemma \ref{4Pisier3},
Theorem \ref{4Duality} and all formulas above have versions for
the `$r$-case', i.e. with the spaces $L^p\{M;F\}_r$ in place of
$L^p\{M;F\}_\ell$. These versions can be obtained by mimicking the
proofs of the `$\ell$-case', or by applying that `$\ell$-case'
together with (\ref{4Opposite}). Thus the `$r$-version' of Theorem
\ref{4Duality}  says that for any $1<p <\infty$, we have
$$
L^{p'}\{M;C\}_{r}\,\hookrightarrow\,
L^{p}\{M;C\}_r^{*}\quad\hbox{isometrically},
$$
and that this embedding is onto if $p>2$.

\end{remark}

\medskip
\section{Rigid factorizations and dilations of $L^p$ operators}
In this section we study various properties for bounded linear
maps on noncommutative $L^p$-spaces. We need to introduce the
matricial structure of $L^p(M)$. If $(M,\varphi)$ is any
semifinite von Neumann algebra, we equip $M_k(M)=M_k\otimes M$
with the trace $tr\otimes\varphi$ for any $k\geq 1$, where $tr$ is
the usual trace on $M_k$. This gives rise to the noncommutative
$L^p$-spaces $L^p(M_k(M))$. According to \cite[p. 141]{P2}, there
exists a (necessarily unique) operator space structure on $L^p(M)$
such that
$$
S^p_k[L^p(M)]\,\simeq\, L^p(M_k(M))\quad\hbox{isometrically}
$$
for any $k\geq 1$. (This structure is obtained by interpolation
between the predual operator space of $M^{\rm op}$ and $M$.)

We say that a linear map $u\colon L^p(M)\to L^p(M)$ is positive if
it maps $L^p(M)_+$ into itself. (Note that $L^p(M)$ is spanned by
$L^p(M)_+$.) Next we say that $u$ is completely positive if
$$
I_{S^p_k}\otimes u\colon L^p(M_k(M))\longrightarrow  L^p(M_k(M))
$$
is positive for any $k\geq 1$.

\bigskip
We will consider isometries on noncommutative $L^p$-spaces, and we
will use their description given by Yeadon's theorem (see also
Remark \ref{4Y2} below).

\begin{theorem}\label{4Y1}\, {\rm (Yeadon \cite{Y})}
Let $(M,\varphi)$ and $(N,\psi)$ be two semifinite von Neumann
algebras, let $1<p\not= 2<\infty$, and let $T\colon L^p(N)\to
L^p(M)$ be a linear isometry. There exist a one-to-one normal
Jordan homomorphism $J\colon N\to M$, a positive unbounded
operator $B$ affiliated with $J(N)'\cap M$ and a partial isometry
$W\in M$ such that $W^*W$ is the support projection of $B$,
$\psi(a) =\varphi(B^pJ(a))$ for all $a\in N_+$, and
$$
T(a)\,=\, WBJ(a),\qquad a\in N\cap L^p(N).
$$
\end{theorem}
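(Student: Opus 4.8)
The plan is to exploit the geometry of $L^p$ for $1<p\neq 2<\infty$, since all of the rigidity comes from the fact that, away from $p=2$, the norm $\norm{\ }_p$ remembers far more than the mere linear structure. I would not try to read off the triple $(J,B,W)$ directly; instead I would first extract a \emph{disjointness–preservation} property of $T$ and only afterwards assemble the algebraic data. The engine is the following equality principle: for $a,c\in L^p(M)$ with $a^*c=0$ (and dually $ac^*=0$), one computes $\vert a+\lambda c\vert^2=\vert a\vert^2+\vert\lambda\vert^2\vert c\vert^2$, whence $\norm{a+\lambda c}_p^p=\norm{a}_p^p+\vert\lambda\vert^p\norm{c}_p^p$ for every scalar $\lambda$; and, crucially for $p\neq 2$, a strong enough form of this additivity \emph{forces} that orthogonality of supports. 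Since $T$ is an isometry it preserves $\norm{\ }_p^p$ along every complex line, hence preserves the two-variable function $\lambda\mapsto\norm{a+\lambda c}_p^p$, and therefore carries disjointly supported pairs to disjointly supported pairs.

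To prove the equality principle I would differentiate. For $1<p<\infty$ the functional $x\mapsto\norm{x}_p^p$ is Fr\'echet differentiable, its derivative being the duality map $L^p(M)\to L^{p'}(M)$, which sends a positive element $x$ to $p\,x^{p-1}$. The point is that for $p\neq 2$ the \emph{second}-order term in the expansion of $\norm{a+\lambda c}_p^p$ detects the interaction between the supports of $a$ and $c$, so that matching Taylor coefficients through the isometry $T$ pins down the orthogonality. This is the step I expect to be the genuine analytic core, and the place where the hypothesis $p\neq 2$ enters essentially (for $p=2$ the statement is false, every unitary of the Hilbert space $L^2(M)$ being an isometry). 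It is the noncommutative analogue of the Banach--Lamperti mechanism, but one cannot integrate pointwise and must instead work with $\varphi$ and the Borel functional calculus of $\vert\cdot\vert$.

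With disjointness preservation in hand I would build the algebraic structure. Applying it to spectral projections of elements of $N$, one obtains an orthogonality–preserving, square–preserving map on the projection lattice of the finite corners of $N$; by normality this extends to a one-to-one normal Jordan homomorphism $J\colon N\to M$. The partial isometry $W$ is the common phase in the polar decompositions $T(a)=W\vert T(a)\vert$, which the orthogonality structure forces to be independent of the positive element $a$; writing $\vert T(a)\vert=BJ(a)$ then defines the positive density $B$, and the same structure guarantees that a single $W$ and a single $B$ work simultaneously for all $a$, with $W^*W$ the support projection of $B$ and $B$ affiliated with $J(N)'\cap M$. This last commutation is exactly the assertion that $T$ is a \emph{weighted} Jordan homomorphism rather than a twisted one. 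Finally the trace identity $\psi(a)=\varphi\bigl(B^pJ(a)\bigr)$ is forced by comparing $\norm{a}_p^p=\psi(a^p)$ with $\norm{T(a)}_p^p=\varphi\bigl(\vert WBJ(a)\vert^p\bigr)=\varphi\bigl(B^pJ(a)^p\bigr)=\varphi\bigl(B^pJ(a^p)\bigr)$ for $a\in N_+\cap L^p(N)$, using $W^*WB=B$ and the commutation of $B$ with $J(N)$; the factorisation $T(a)=WBJ(a)$ then extends from this positive data by linearity and density.

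The main obstacle I anticipate is twofold. First, the analytic orthogonality lemma must be established in the genuinely noncommutative setting, via the differentiability of operator $L^p$-norms with respect to the trace $\varphi$, which is more delicate than its commutative counterpart. Second, because $\varphi$ and $\psi$ are only \emph{semifinite}, the unit $1$ need not lie in $L^p$, so $T(1)$ is unavailable; I would therefore localise to projections $e\in N$ with $\psi(e)<\infty$, carry out the construction on each finite corner $eNe$, and patch the resulting partial isometries and densities together consistently, checking that $J$, the operator $B$, and the relation between $W^*W$ and the support of $B$ all survive the limit. Verifying this compatibility across the net of finite corners, together with the affiliation of $B$ to $J(N)'\cap M$, is where the bookkeeping is heaviest.
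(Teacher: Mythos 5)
You should note first that the paper contains no proof of this statement: Theorem \ref{4Y1} is quoted from Yeadon's paper \cite{Y} and used as a black box, so the only meaningful comparison is with Yeadon's published argument. At the level of architecture your plan coincides with it (and with Lamperti's commutative ancestor): extract a two-sided disjointness-preservation property of $T$ from an equality principle for $\norm{\ }_p^p$, apply it to projections of finite trace, assemble $J$, $W$ and $B$ by patching over the finite corners, and force the trace identity $\psi(a)=\varphi(B^pJ(a))$ by comparing $\norm{a}_p^p$ with $\norm{T(a)}_p^p$. The easy direction of your equality principle is also correct as stated (with both $a^*c=0$ and $ac^*=0$, the operators $\vert a\vert^2$ and $\vert c\vert^2$ are disjointly supported, whence the additivity of $\lambda\mapsto\norm{a+\lambda c}_p^p$).

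The genuine gap is that the heart of the theorem, the converse of that equality principle (additivity forces $a^*c=ac^*=0$), is only named, and the method you propose for it is doubtful. Matching ``second-order Taylor coefficients'' presupposes that a second-order expansion of $\lambda\mapsto\norm{a+\lambda c}_p^p$ exists, and this fails in exactly the relevant cases: for $1<p<2$ the functional $x\mapsto\norm{x}_p^p$ is $C^1$ but not twice differentiable (already in the disjoint case the function is $\norm{a}_p^p+\vert\lambda\vert^p\norm{c}_p^p$, whose second derivative at $0$ is infinite), and even for $2<p<4$ the scalar function $s\mapsto s^{p/2}$ has unbounded second derivative at $0$, so the expansion of $\varphi\bigl((\vert a\vert^2+t(a^*c+c^*a)+t^2\vert c\vert^2)^{p/2}\bigr)$ in $t$ raises double-operator-integral issues that are precisely the hard part; the argument as sketched is therefore circular where it is not missing. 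Yeadon's proof of the key lemma avoids differentiation altogether: it combines the operator parallelogram law $\vert x+y\vert^2+\vert x-y\vert^2=2\vert x\vert^2+2\vert y\vert^2$ with the equality case of the trace inequality for positive operators, $\varphi\bigl((A+B)^r\bigr)\geq\varphi(A^r)+\varphi(B^r)$ for $r\geq1$ (reversed for $r\leq 1$), where equality holds if and only if $AB=0$; taking $r=p/2$ is exactly where $p\neq2$ enters, and the argument works for all $0<p\neq2<\infty$. Secondarily, the steps you defer --- extending the map on finite projections to a normal one-to-one Jordan homomorphism on all of $N$, producing a single partial isometry $W$ and a single unbounded $B$ affiliated with $J(N)'\cap M$, and verifying compatibility across the net of finite corners --- are asserted rather than carried out, and in the semifinite setting this is where most of Yeadon's paper is spent. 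As it stands, your text is a correct road map of the known proof with its central analytic lemma left open.
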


\begin{remark}\label{4Y2}
We will need a little information on Jordan homomorphisms, for
which we refer e.g. to \cite[pp. 773-777]{KR}. Let $M,N$ be von
Neumann algebras. We recall that a Jordan homomorphism $J\colon
N\to M$ is a linear map satisfying $J(a^2)=J(a)^2$ and
$J(a^*)=J(a)^*$ for any $a\in N$. Assume that $J\colon N\to M$ is
a normal Jordan homomorphism, and let $D\subset M$ be the von
Neumann algebra generated by the range of $J$. Then there exists
two central projections $e_1,e_2$ of $D$ such that the map
$\pi_1\colon N\to M$ defined by $\pi_1(a)=J(a)e_1$ is a
$*$-representation, the map $\pi_2\colon N\to M$ defined by
$\pi_2(a)=J(a)e_2$ is a $*$-anti-representation, and $e_1+e_2$ is
equal to the unit of $D$. Thus we have $J=\pi_1+\pi_2$.
\end{remark}

\bigskip
Throughout the rest of this section, we fix a number $1<p\not =
2<\infty$, and we let $p'$ denote its conjugate number. Let
$(N,\psi)$ be a semifinite von Neumann algebra and let $u\colon
L^p(N)\to L^p(N)$ be a linear mapping. We say that $u$ admits a
rigid factorisation if there exist another semifinite von Neumann
algebra $(M,\varphi)$ and two linear isometries $T\colon L^p(N)\to
L^p(M)$ and $S\colon L^{p'}(N)\to L^{p'}(M)$ such that $u=S^*T$.
\begin{displaymath}
\xymatrix{ & L^p(M) \ar[rd]^{S^*} & \\  L^p(N) \ar[rr]^{u}
\ar[ru]^{T} & & L^p(N)}
\end{displaymath}

We note that any completely positive contraction $u\colon S^p_k\to
S^p_k$ is completely contractive. This follows from \cite[Prop.
2.2 and Lem. 2.3]{P1}. The main result of this section is the
following.

\begin{theorem}\label{4Main} Assume that $1<p\not =
2<\infty$. There exist an integer $k\geq 1$ and a completely
positive contraction $u\colon S^p_k\to S^p_k$ which does not have
a rigid factorisation.
\end{theorem}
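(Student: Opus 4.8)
The plan is to prove the theorem by contradiction, exploiting the duality machinery developed in Sections 2 and 3, particularly the rigidity encoded in the factorisation $u = S^*T$ together with Yeadon's structural description of $L^p$-isometries. First I would suppose that some completely positive contraction $u \colon S^p_k \to S^p_k$ does admit a rigid factorisation, so that $u = S^*T$ with isometries $T \colon S^p_k \to L^p(M)$ and $S \colon S^{p'}_k \to L^{p'}(M)$ into some semifinite $L^p(M)$. Applying Theorem~\ref{4Y1} to each isometry, I would write $T(a) = W_T B_T J_T(a)$ and $S(c) = W_S B_S J_S(c)$ with normal Jordan homomorphisms $J_T, J_S$, positive affiliated operators $B_T, B_S$ commuting with the respective ranges, and partial isometries $W_T, W_S$. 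Using Remark~\ref{4Y2}, I would split each Jordan homomorphism into its representation and anti-representation parts via central projections, so that the factorisation data is controlled by genuine (anti-)homomorphisms of $M_k$ into $M$.

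The next step is to compute $S^*T$ explicitly in terms of this data and compare it with the given completely positive $u$. The pairing $\langle S^*T(a), c\rangle = \langle T(a), S(c)\rangle = \varphi\bigl(W_T B_T J_T(a)\, W_S B_S J_S(c)\bigr)$ must reduce, for all $a \in S^p_k$ and $c \in S^{p'}_k$, to $tr(u(a)c)$. The central idea is that matching these two bilinear forms forces strong compatibility conditions: the ranges of $J_T$ and $J_S$ must interact rigidly, and complete positivity of $u$ constrains which combinations of representation/anti-representation parts can occur. Here is where I would bring in the operator-space valued spaces $L^p\{M;R\}_\ell$ and the duality Theorem~\ref{4Duality}: a rigid factorisation that is compatible with the complete positivity (hence complete contractivity, as noted just before the statement) of $u$ should yield, after tensoring with a row or column structure, a contractive embedding of the type forbidden by Remark~\ref{4Rem}(2). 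In other words, I expect the existence of such $(T,S)$ to produce a contraction $L^{p'}\{M;F^{*\,\rm op}\}_\ell \to L^p\{M;F\}_\ell^*$ for a suitable operator space $F$ (likely built from $R$ or $\ell^\infty$), which by the argument of Remark~\ref{4Rem}(2) entails injectivity of $M$ or some similarly rigid property.

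The crux, and the main obstacle, is to manufacture a \emph{single} explicit completely positive contraction $u$ on some $S^p_k$ whose factorisation would violate a genuine operator-space inequality. The difficulty is twofold: first, identifying the right $u$ — presumably one whose matrix of coefficients encodes a specific ``row/column clash'' that is completely positive at the Banach level but whose dilation would demand an incompatible mixing of the $R(\tfrac{2}{p})$ and $C_k$ factors appearing in the identifications~(\ref{2Pisier5}); and second, translating the constraint ``$J$ is a Jordan homomorphism that is part representation, part anti-representation'' into a precise failure of the completely isometric version of Proposition~\ref{2Duality2}. Since Remark~\ref{2Duality4} already flags that the identifications in~(\ref{2Pisier7}) are \emph{not} completely isometric even for $k=1$, I anticipate the obstruction to surface as a gap between the isometric duality available for $L^p\{M;R\}_\ell$ and the completely isometric duality that a rigid factorisation compatible with complete positivity would require. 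I would therefore aim to choose $u$ so that both the representation and anti-representation parts of any candidate $J_T$ are forced to be nonzero simultaneously, since this is exactly the incompatibility that $p \neq 2$ rules out.

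Because the detailed construction of the explicit $u$ is deferred to Section~5 in the paper (the present Section~4 gives a non-constructive existence proof), I expect that the cleanest route for Theorem~\ref{4Main} itself is the abstract one: assume a rigid factorisation exists for \emph{every} completely positive contraction on every $S^p_k$, extract from Yeadon's theorem and the duality results a uniform contractive map of the shape~(\ref{4Rem2}), and then derive a contradiction by specializing $M$ to a non-injective von Neumann algebra exactly as in Remark~\ref{4Rem}(2). The delicate point to watch is ensuring that the isometries $T$ and $S$ genuinely land in the \emph{same} $L^p(M)$ and that the adjoint $S^*$ is computed with respect to the pairing~(\ref{2dual2}), so that the algebraic identity $u = S^*T$ can be transported faithfully through the operator-space valued $L^p$-machinery without losing the complete-positivity information that drives the whole argument.
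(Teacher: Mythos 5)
Your starting ingredients are the right ones (Yeadon's Theorem \ref{4Y1}, the Jordan splitting of Remark \ref{4Y2}, and the intuition that a row/column clash is the obstruction), but the mechanism you propose for the final contradiction would fail. In a rigid factorisation the von Neumann algebra $M$ is existentially quantified: the hypothesis hands you some unknown semifinite $M$ together with isometries into $L^p(M)$ and $L^{p'}(M)$, and you have no freedom to ``specialize $M$ to a non-injective von Neumann algebra''. Moreover, Remark \ref{4Rem}(2) asserts that the embedding (\ref{4Rem2}) \emph{fails} for non-injective $M$ (with $F=\ell^\infty$); this is a limitation of the duality machinery for such algebras, not an obstruction to the existence of a factorisation, so no contradiction can be extracted from it. This is exactly why the paper restricts to $F=R$: Theorem \ref{4Duality} holds for \emph{every} semifinite $M$, so whatever algebra the hypothetical factorisation produces, the duality step goes through.

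What is missing from your outline is the quantitative invariant and a map that violates it. The paper first proves Proposition \ref{4Tensor}: via Yeadon's theorem and the Jordan splitting, any isometry $T$ decomposes as $T_1+T_2$, where $T_1\otimes I_F$ is contractive from $L^p\{N;F\}_\ell$ to $L^p\{M;F\}_\ell$ and $T_2\otimes I_F$ is contractive from $L^p\{N;F\}_\ell$ to $L^p\{M;F^{\rm op}\}_r$. Combining this (for both $T$ and $S$) with Theorem \ref{4Duality} and Proposition \ref{2Duality2} yields Corollary \ref{4Rigid}: every rigidly factorisable $u$ satisfies
\[
\bignorm{u\otimes I_{\ell^2}\colon S^p_k\{R\}_\ell\longrightarrow S^p_k\{R\}_\ell \, +_p\, S^p_k\{C\}_r}\,\leq\, 4.
\]
The theorem then reduces to exhibiting a completely positive contraction violating this bound --- precisely the step you flag as ``the crux and the main obstacle'' and leave unresolved. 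The paper's Section 4 resolves it non-constructively: if every completely positive contraction on every $S^p_k$ factored rigidly, then (writing an arbitrary map $u$ as $(u_1-u_2)+i(u_3-u_4)$ with the $u_j$ completely positive of norm at most $\rnorm{u}$) one would get $\norm{u\otimes I_{\ell^2}}\leq 16\,\rnorm{u}$ for \emph{all} linear maps $u$ on $S^p_k$; Theorem \ref{3Char1} then factors $I_{\ell^2}$ through an operator space $F$, and the identifications (\ref{2Pisier5}) and (\ref{2Pisier8}) turn this into $\norm{v}_{2p}\leq 16\norm{v}_\infty$ for every $v\colon\ell^2_k\to\ell^2_k$, which is false for $k>16^{2p}$. (Section 5 instead exhibits an explicit $u$ violating the bound of Corollary \ref{4Rigid}.) Without Corollary \ref{4Rigid} and one of these two closing arguments, your outline does not yield the theorem.
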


The origin of this result is the search for a noncommutative
analog of Akcoglu's dilation theorem \cite{A,AS}. Let
$(\Omega,\mu)$ be a measure space, and let $u\colon L^p(\Omega)\to
L^p(\Omega)$ be a positive contraction. Akcoglu's theorem asserts
that there exist another measure space $(\Omega',\mu')$, two
contractions
$$
J\colon L^p(\Omega)\longrightarrow
L^p(\Omega')\qquad\hbox{and}\qquad Q\colon
L^p(\Omega')\longrightarrow L^p(\Omega),
$$
and an invertible isometry $U\colon L^p(\Omega')\to L^p(\Omega')$
such that $u^n=QU^nJ$ for any integer $n\geq 0$.
\begin{displaymath}
\xymatrix{ L^p(\Omega') \ar[r]^{U^n} & L^p(\Omega') \ar[d]^{Q}\\
L^p(\Omega) \ar[r]^{u^n} \ar[u]^{J} & L^p(\Omega)}
\end{displaymath}

Owing to that statement, we consider a noncommutative $L^p$-space
$L^p(N)$, a linear mapping $u\colon L^p(N)\to L^p(N)$, and we say
that $u$ is {\it dilatable} if there exist another noncommutative
$L^p$-space $L^p(M)$, two linear contractions $J\colon L^p(N)\to
L^p(M)$ and $Q\colon L^p(M)\to L^p(N)$, and an invertible isometry
$U\colon L^p(M)\to L^p(M)$ such that $u^n=QU^nJ$ for any integer
$n\geq 0$. Any dilatable operator is clearly a contraction and
Akcoglu's theorem implies that any positive contraction on a
commutative $L^p$-space is dilatable.

If $u\colon L^p(N)\to L^p(N)$ is a dilatable operator on a
noncommutative $L^p$-space, then $QJ$ is equal to the identity of
$L^p(N)$. Since $\norm{J}\leq 1$ and $\norm{Q}\leq 1$, this
implies that  $J$ and $Q^*$ are isometries. Furthermore we have
$u=QUJ$, hence $u=S^*T$, with $T=UJ$ and $S=Q^*$. This shows that
$u$ admits a rigid factorisation. As a consequence of Theorem
\ref{4Main}, we therefore obtain the following corollary, saying
that there is no direct analog of Akcoglu's theorem on
noncommutative $L^p$-spaces.

\begin{corollary}
For any $1<p\not = 2<\infty$, there is an integer $k\geq 1$ and a
completely positive contraction $u\colon S^p_k\to S^p_k$ which is
not dilatable.
\end{corollary}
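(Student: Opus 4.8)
The plan is to convert a hypothetical rigid factorisation into rigid structural data by means of Yeadon's theorem, and then to argue \emph{non-constructively} that the family of maps so obtained is too small to fill up the (full-dimensional) set of completely positive contractions of $S^p_k$ once $k$ is large. First I would set $N=M_k$, so that $L^p(N)=S^p_k$, and write out the definition: a rigid factorisation is a pair of isometries $T\colon L^p(M_k)\to L^p(M)$ and $S\colon L^{p'}(M_k)\to L^{p'}(M)$ with $u=S^*T$. The crucial rigidity to exploit is that the \emph{same} pair $(T,S)$ also factors the Banach adjoint, namely $u^*=T^*S$; thus $u$ and $u^*$ are governed by one common $M$ and one common pair of isometries, and the whole mechanism is available precisely because $p\neq 2$.

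Next I would apply Yeadon's theorem (Theorem \ref{4Y1}) to both $T$ and $S$. This produces normal Jordan homomorphisms $J_T,J_S\colon M_k\to M$, positive operators $B_T,B_S$ affiliated with $J_T(M_k)'\cap M$ and $J_S(M_k)'\cap M$, and partial isometries $W_T,W_S\in M$, subject to the trace identities $tr(a)=\varphi(B_T^{p}J_T(a))$ and $tr(a)=\varphi(B_S^{p'}J_S(a))$. Using Remark \ref{4Y2} I would split each Jordan homomorphism as $\pi_1+\pi_2$, a representation plus an anti-representation carried by complementary central projections, so that only finitely many discrete ``types'' can occur. Substituting these forms into $\langle u(a),c\rangle=\varphi\big(W_TB_TJ_T(a)\,W_SB_SJ_S(c)\big)$ expresses each matrix entry of $u$ as the trace of a fixed word in the transported matrix units $J_T(E_{ij})$, $J_S(E_{ij})$ and the data $B_T,B_S,W_T,W_S$, exhibiting $u$ as read off from the moment data of $\varphi$ on the subalgebra these elements generate.

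The non-constructive conclusion I would then pursue is a dimension count. Using that $M_k$ is finite dimensional, that $B_T,B_S$ commute with the ranges of $J_T,J_S$, and that the two trace identities pin down their ``weights'', I would compress $M$ to a controlled corner and argue that the set $\mathcal D_k$ of rigidly factorable completely positive contractions is contained in a countable union of images of boundedly many finite-dimensional parameter spaces. Since the completely positive contractions of $S^p_k$ form a compact convex body of real dimension of order $k^4$, this would force $\mathcal D_k$ to be a proper (in fact nowhere dense) subset for $k$ large, so that some completely positive contraction $u$ fails to be rigidly factorable; this is exactly the existence statement ``for some $k$''. The step I expect to be the main obstacle is making this parameter count honest: one must control the size of $M$, handling the possibly continuous spectrum of the $B$'s and the multiplicities of the $J$'s, so that only boundedly many real parameters genuinely enter $u$.

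To certify that a \emph{generic} completely positive contraction escapes $\mathcal D_k$, rather than merely counting dimensions, I would invoke the duality machinery of Section~3. Because $J_T,J_S$ are (Jordan) homomorphisms, the isometries $T,S$ respect the multiplicative row/column structure, so from $u=S^*T$ and $u^*=T^*S$ one can read off a compatibility between a ``column'' factorisation norm of $u$ computed in $L^{p}\{M;C\}_r$ and a ``row'' factorisation norm computed in $L^{p}\{M;R\}_\ell$, the two being linked through the isometric identifications of Theorem \ref{4Duality} and its $r$-version. A rigid factorisation would then force these two numerical invariants of $u$ to agree up to the fixed constants coming from the Yeadon data, whereas for $p\neq 2$ the asymmetry between $R$ and $C$ makes them differ for a typical completely positive contraction. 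Proving that strict discrepancy — and, again, checking that it is robust enough to survive the passage to an arbitrary $M$ — is where the substance of the argument lies; the theorem for a suitable $k$ then follows by a compactness/genericity argument.
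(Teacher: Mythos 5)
Your central strategy --- showing that the rigidly factorable maps form a ``small'' set (coverable by countably many finite-dimensional parameter families, hence nowhere dense among the completely positive contractions) --- is not merely hard to make honest at the step you flag; it is false, and no control on the size of $M$ can rescue it. Let $\mathcal D_k$ denote the set of maps $S^p_k\to S^p_k$ admitting a rigid factorisation. First, $\mathcal D_k$ is convex: if $u_i=S_i^*T_i$ through $M_i$ for $i=0,1$, then $T=\bigl(\lambda^{1/p}T_0,(1-\lambda)^{1/p}T_1\bigr)$ and $S=\bigl(\lambda^{1/p'}S_0,(1-\lambda)^{1/p'}S_1\bigr)$ are isometries into $L^p(M_0\oplus M_1)$ and $L^{p'}(M_0\oplus M_1)$ respectively, and $S^*T=\lambda u_0+(1-\lambda)u_1$; so direct sums implement convex combinations, and this is exactly the mechanism by which ``arbitrarily many parameters genuinely enter'' and fill up open sets. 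Second, $\mathcal D_k$ contains every map $x\mapsto B^*xB$ with $\norm{B}_\infty\leq 1$: let $W\in M_{2k}$ be a unitary dilation of $B$ (so that $B$ is the upper left $k\times k$ corner of $W$), and put $T(x)=x\oplus 0\in S^p_{2k}$ and $S(y)=W(y\oplus 0)W^*\in S^{p'}_{2k}$; both are isometries, and writing $\iota\colon\ell^2_k\to\ell^2_{2k}$ for the inclusion, $\langle S^*T(x),y\rangle=tr\bigl(\iota x\iota^* W\iota y\iota^* W^*\bigr)=tr(xByB^*)=\langle B^*xB,y\rangle$. By convexity, $\mathcal D_k$ therefore contains every completely positive contraction of the form $\sum_i\lambda_i B_i^*xB_i$ with $\lambda_i\geq 0$, $\sum_i\lambda_i\leq 1$, $\norm{B_i}_\infty\leq 1$. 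This family contains a positive multiple of every completely positive map (rescale any Kraus decomposition), its affine hull is the entire real space of hermitian-preserving maps on $M_k$ (real dimension $k^4$), and a convex set with full affine hull has nonempty interior. So the rigidly factorable completely positive contractions contain a full-dimensional convex body: they are not nowhere dense, and no dimension count, moment-data parametrisation, Baire-category or genericity argument can separate them from all completely positive contractions. The obstruction is purely quantitative, occurring at the critical scale $\norm{u}=1$ for rather special contractions, and it must be detected by a numerical invariant.

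This is what the paper does, and it is also the part your last paragraph only gestures at. (Note first a small missing reduction: the corollary is about dilatability, and one must record that dilatable implies rigidly factorable --- $QJ=\mathrm{id}$ with $\norm{J},\norm{Q}\leq1$ forces $J$ and $Q^*$ to be isometries, so $u=S^*T$ with $T=UJ$, $S=Q^*$.) Yeadon's theorem and the Jordan splitting $J=\pi_1+\pi_2$ are used not to count parameters but to prove Proposition \ref{4Tensor}: any isometry $T\colon L^p(N)\to L^p(M)$ decomposes as $T_1+T_2$ where $T_1\otimes I_F$ is contractive into $L^p\{M;F\}_\ell$ and $T_2\otimes I_F$ into $L^p\{M;F^{\rm op}\}_r$. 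Applying this to both $T$ and $S$ and invoking the duality Theorem \ref{4Duality} yields the key uniform bound of Corollary \ref{4Rigid}: every rigidly factorable $u$ satisfies $\bignorm{u\otimes I_{\ell^2}\colon S^p_k\{R\}_\ell\to S^p_k\{R\}_\ell+_pS^p_k\{C\}_r}\leq 4$. Theorem \ref{4Main} then follows either non-constructively --- assuming that \emph{every} completely positive contraction, for \emph{every} $k$, factors (a uniform hypothesis, not a genericity statement), one gets the bound $16\rnorm{u}$ for all linear $u$, and Theorem \ref{3Char1} produces the absurd inequality $\norm{v}_{2p}\leq16\norm{v}_\infty$ for all $v\colon\ell^2_k\to\ell^2_k$ --- or constructively, by the explicit contraction of Section 5 which violates the bound $4$. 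Your instinct that the $R$/$C$ asymmetry for $p\neq 2$ and Theorem \ref{4Duality} are the relevant tools is correct, but without Proposition \ref{4Tensor} and the resulting uniform bound there is no invariant for a counterexample to violate, and, as shown above, ``typical'' or ``generic'' cannot serve as a substitute.
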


We refer the reader to \cite{AD} for a related but different
notion of factorisation of linear maps as the product of an
isometry and of the adjoint of an isometry.

\bigskip
We will give two proofs of Theorem \ref{4Main}, one at the end of
this section and another one in Section 5. Both will rely on the
following decomposition result of independent interest.

\begin{proposition}\label{4Tensor}
Let $1<p\not= 2<\infty$ and let $(M,\varphi)$ and $(N,\psi)$ be
two semifinite von Neumann algebras. Let $T\colon L^p(N)\to
L^p(M)$ be a linear isometry. Then there exist two contractions
$T_1,T_2\colon L^p(N)\to L^p(M)$ such that
$$
T=T_1+T_2,
$$
and for any operator space $F$,
\begin{equation}\label{4Tensor1}
\bignorm{T_1\otimes I_{F}\colon L^p\{N;F\}_\ell\longrightarrow
L^p\{M;F\}_\ell}\leq 1
\end{equation}
and
\begin{equation}\label{4Tensor2}
\bignorm{T_2\otimes I_{F}\colon L^p\{N;F\}_\ell\longrightarrow
L^p\{M;F^{\rm op}\}_r}\leq 1.
\end{equation}
\end{proposition}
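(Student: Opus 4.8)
The plan is to apply Yeadon's theorem (Theorem \ref{4Y1}) to the isometry $T$ and to split it according to the representation / anti-representation decomposition of the underlying Jordan homomorphism. Thus I write $T(a)=WBJ(a)$ for $a\in N\cap L^p(N)$, where $J\colon N\to M$ is a one-to-one normal Jordan homomorphism, $B$ is a positive operator affiliated with $J(N)'\cap M$ with $W^*W$ equal to the support projection of $B$, and $\psi(a)=\varphi(B^pJ(a))$ for $a\in N_+$. By Remark \ref{4Y2} there are orthogonal central projections $e_1,e_2$ of the von Neumann algebra $D$ generated by the range of $J$, with $e_1+e_2=1_D$, such that $\pi_1=J(\cdot)e_1$ is a normal $*$-representation and $\pi_2=J(\cdot)e_2$ a normal $*$-anti-representation. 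I define
$$
T_1(a)=WB\pi_1(a)\qquad\hbox{and}\qquad T_2(a)=WB\pi_2(a),\qquad a\in N\cap L^p(N),
$$
so that $T=T_1+T_2$ on the dense subspace $N\cap L^p(N)$. A structural fact I record first is that, since the spectral projections of $B$ lie in $J(N)'$, the operator $B$ commutes with all of $D$, hence with $e_1,e_2$ and with the ranges of $\pi_1,\pi_2$. Once (\ref{4Tensor1}) and (\ref{4Tensor2}) are proved, contractivity of $T_1,T_2$ is automatic by taking $F=\Cdb$, since then $L^p\{N;\Cdb\}_\ell=L^p(N)$ and $L^p\{M;\Cdb\}_\ell=L^p\{M;\Cdb^{\rm op}\}_r=L^p(M)$ isometrically; so the whole content lies in the two operator space valued bounds, which I verify on the dense subspace $V(N)\otimes F$ (Lemma \ref{4Pisier3}(3)) and extend by continuity.

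For (\ref{4Tensor1}) with $p\geq 2$, take a factorisation $y=czd$, $c,d\in V(N)$, $z=\sum_k z_k\otimes x_k\in N\otimes F$. Using multiplicativity of $\pi_1$ and that $B$ commutes with its range, I get
$$
(T_1\otimes I_F)(czd)=WB\pi_1(c)\,z'\,\pi_1(d)=\bigl(W\pi_1(c)\bigr)\,z'\,\bigl(B\pi_1(d)\bigr),\qquad z'=(\pi_1\otimes I_F)(z).
$$
Here $W\pi_1(c)\in M$ with $\norm{W\pi_1(c)}_\infty\leq\norm{c}_\infty$, and $\minnorm{z'}\leq\minnorm{z}$ since the $*$-homomorphism $\pi_1$ is a complete contraction. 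The key point is the right factor: as $B$ commutes with $\pi_1(d)$ and $\pi_1$ preserves functional calculus, $\vert B\pi_1(d)\vert^p=B^p\pi_1(\vert d\vert^p)$, whence
$$
\norm{B\pi_1(d)}_p^p=\varphi\bigl(B^pJ(\vert d\vert^p)e_1\bigr)\leq\varphi\bigl(B^pJ(\vert d\vert^p)\bigr)=\psi(\vert d\vert^p)=\norm{d}_p^p,
$$
the inequality coming from $0\leq e_1\leq 1$ and traciality. Feeding this into Lemma \ref{4Pisier3}(1) gives the bound $\norm{c}_\infty\minnorm{z}\norm{d}_p$, and the infimum yields (\ref{4Tensor1}). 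For $p\leq 2$ I run the same computation on $y=azb$ ($a\in L^q$, $b\in L^2$), now distributing $B=B^{p/q}B^{p/2}$ so that $WB^{p/q}\pi_1(a)\in L^q(M)$ and $B^{p/2}\pi_1(b)\in L^2(M)$ with the analogous trace estimates, and conclude via Lemma \ref{4Pisier3}(2).

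For (\ref{4Tensor2}) the guiding idea is that a $*$-anti-representation into $M$ is a normal $*$-representation $\pi_2\colon N\to M^{\rm op}$, so the argument above should produce a bound into $L^p\{M^{\rm op};F\}_\ell$, which is $L^p\{M;F^{\rm op}\}_r$ by (\ref{4Opposite}). Concretely, for $p\geq 2$ and $y=czd$, anti-multiplicativity gives $(T_2\otimes I_F)(czd)=\bigl(WB\pi_2(d)\bigr)\,z''\,\pi_2(c)$ with $z''=(\pi_2\otimes I_F)(z)$; read in $M^{\rm op}$ this is $\pi_2(c)\ast z''\ast\bigl(WB\pi_2(d)\bigr)$, a legitimate $\alpha_p^\ell$-factorisation over $M^{\rm op}$ with bounded left factor $\pi_2(c)$ and $L^p$ right factor $WB\pi_2(d)$. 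Since $L^p(M^{\rm op})=L^p(M)$ isometrically, I compute the ordinary $L^p(M)$-norm of that factor through $\vert WB\pi_2(d)\vert^p=B^p\pi_2(\vert d^*\vert^p)$ and $\psi(\vert d^*\vert^p)=\norm{d}_p^p$, while $\pi_2\colon N\to M^{\rm op}$ being a complete contraction controls $z''$. This shows $T_2\otimes I_F\colon L^p\{N;F\}_\ell\to L^p\{M^{\rm op};F\}_\ell$ is contractive, and (\ref{4Opposite}) (applied to $F^{\rm op}$, using $(F^{\rm op})^{\rm op}=F$) converts this into (\ref{4Tensor2}); the case $p\leq 2$ is identical with the split $B=B^{p/q}B^{p/2}$.

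I expect the main obstacle to be the bookkeeping around the unbounded weight $B$: one must repeatedly use that $B$ commutes with the whole of $D$ (not just with $J(N)$) to shuttle it to the correct side of each factorisation, check that the $\pi_i$ commute with functional calculus so that the identity $\psi=\varphi(B^pJ(\cdot))$ can be applied with the central cut-off $e_i$, and keep the opposite-algebra identifications ($L^p(M^{\rm op})=L^p(M)$, $M\minten F=M^{\rm op}\minten F^{\rm op}$, and $\vert x\vert_{M^{\rm op}}=\vert x^*\vert$) consistent throughout the $T_2$ computation.
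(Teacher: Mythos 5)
Your proposal is correct and follows essentially the same route as the paper's own proof: Yeadon's theorem, the splitting $J=\pi_1+\pi_2$ via the central projections $e_1,e_2$ (so that your $T_i(a)=WB\pi_i(a)$ agrees with the paper's $T_i(a)=T(a)e_i$ on the dense subspace), the shuttling of powers of $B$ across the range of $\pi_1$ combined with $\psi(\cdotp)=\varphi(B^pJ(\cdotp))$ and Lemma \ref{4Pisier3}, and the treatment of $T_2$ by viewing $\pi_2$ as a $*$-representation into $M^{\rm op}$ together with the identification (\ref{4Opposite}). The only differences are cosmetic: you write out the case $p\geq 2$ and sketch $p\leq 2$, while the paper does the reverse, and you make explicit the $T_2$ computation (including the flip $\pi_2(d)^*\pi_2(d)=\pi_2(\vert d^*\vert^2)$) that the paper leaves as a remark.
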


\begin{proof}
Let $T\colon L^p(N)\to L^p(M)$ be a linear isometry, and let
$W,B,J$ be provided by Yeadon's theorem \ref{4Y1}, so that
$T=WBJ$. We apply Remark \ref{4Y2} to the normal Jordan
homomorphism $J\colon N\to M$, and let $e_1,e_2,\pi_1,\pi_2$ be
given by this statement. Since $B$ commutes with the range of $J$,
it commutes with $e_1$, and hence $B$ commutes with the range of
$\pi_1$.

We define $T_1, T_2\colon  L^p(N)\to L^p(M)$ by letting
$$
T_1(a)\,=\, T(a)e_1\qquad\hbox{and}\qquad T_2(a)\,=\, T(a)e_2
$$
for any $a\in L^p(N)$. By construction, $T=T_1+T_2$.

\smallskip
Assume that $p< 2$ and let $q> 2$ be such that
$\frac{1}{2}+\frac{1}{q} \,=\,\frac{1}{p}$. Let $V=V(N)$ and let
$y\in V\otimes F$ such that $\norm{y}_{\alpha_p^\ell}<1$. Thus we
can write $y=azb$ for some $a,b\in V$ and $z\in N\otimes F$ such
that
$$
\norm{a}_q\leq 1,\quad \norm{b}_2\leq
1,\quad\hbox{and}\quad\minnorm{z}\leq 1.
$$
Let $(c_k)_k$ and $(x_k)_k$ be finite families in $N$ and $F$
respectively such that $z=\sum_k c_k\otimes x_k$. Then
$$
(T_1\otimes I_F)y\,=\, \sum_k T_1(ac_kb)\otimes x_k.
$$
Let $\theta=\frac{p}{2}$, so that $1-\theta=\frac{p}{q}$. Since
$\pi_1= J(\cdotp)e_1$ is a $*$-representation whose range commutes
with $B$, we have
$$
T_1(ac_kb) = WB\pi_1(ac_kb) = WB \pi_1(a)\pi_1(c_k)\pi_1(b) =
WB^{1-\theta}\pi_1(a)\pi_1(c_k)B^{\theta}\pi_1(b)
$$
for any $k$. Hence
\begin{align*}
(T_1\otimes I_F)y\, & =\, WB^{1-\theta}\pi_1(a)\,\Bigl(\sum_k
\pi_1(c_k)\otimes x_k\Bigr)\,B^{\theta}\pi_1(b)\\
& =\,WB^{1-\theta} \pi_1(a)\bigl(\pi_1\otimes
I_F\bigr)(z)B^{\theta}\pi_1(b).
\end{align*}
By Lemma \ref{4Pisier3}, we deduce that
$$
\bignorm{(T_1\otimes I_F)y}_{L^p\{M;F\}_\ell}\,\leq\,
\norm{WB^{1-\theta} \pi_1(a)}_q \minnorm{(\pi_1\otimes I_F)(z)}
\norm{B^{\theta}\pi_1(b)}_2.
$$
Since $W$ is the support projection of $B$, we have $\vert
WB^{1-\theta} \pi_1(a)\vert = \vert  B^{1-\theta} \pi_1(a)\vert$.
Since $B$ commutes with the range of $\pi_1$, and $\pi_1$ is a
$*$-representation, we deduce that
$$
\vert WB^{1-\theta} \pi_1(a)\vert^q =
B^{q(1-\theta)}\vert\pi_1(a)\vert^q = B^p\pi_1(\vert a\vert^q).
$$
Thus
$$
\norm{WB^{1-\theta} \pi_1(a)}_q^q = \varphi\bigl( B^p\pi_1(\vert
a\vert^q)\bigr)   \leq \varphi\bigl( B^p J(\vert a\vert^q)\bigr) =
\psi(\vert a\vert^q) = \norm{a}_q^q\leq 1.
$$
Likewise, we have
$$
\norm{B^{\theta}\pi_1(b)}_2\leq\norm{b}_2\leq 1.
$$
The $*$-representation $\pi_1$ is a complete contraction, hence
$$
\minnorm{(\pi_1\otimes I_F)(z)}\leq\minnorm{z}\leq 1.
$$
Thus we obtain that $\bignorm{(T_1\otimes
I_F)y}_{L^p\{M;F\}_\ell}\leq 1$. This shows (\ref{4Tensor1}), that
is, $T_1\otimes I_F$ extends to a contraction from
$L^p\{N;F\}_\ell$ into $L^p\{M;F\}_\ell$. The proof for $p\geq 2$
is similar.

The inequality (\ref{4Tensor2}) can be proved by similar
arguments. It also follows from the above proof and the
identification (\ref{4Opposite}). Indeed, saying that $\pi_2\colon
N\to M$ is an $*$-anti-representation means that $\pi_2$ is a
$*$-representation from $N$ into $M^{\rm op}$.
\end{proof}

\begin{remark}\label{4Complement}
Let $T, T_1,T_2\colon L^p(N)\to L^p(M)$ as above. Then we also
have
$$
\bignorm{T_1\otimes I_{F}\colon L^p\{N;F\}_r\longrightarrow
L^p\{M;F\}_r}\leq 1
$$
and
$$
\bignorm{T_2\otimes I_{F}\colon L^p\{N;F\}_r\longrightarrow
L^p\{M;F^{\rm op}\}_\ell}\leq 1
$$
for any operator space $F$. These estimates have the same proofs
as (\ref{4Tensor1}) and (\ref{4Tensor2}). Appealing to
(\ref{4Opposite}), they can be also viewed as a formal consequence
of the latter estimates.
\end{remark}

\bigskip
Our first proof of Theorem \ref{4Main} will appeal to
$L^p$-matricially normed spaces and some results from \cite{JLM}.
Let $X$ be a Banach space. For any integers $k,m\geq 1$ and any
$y\in S^p_k\otimes X$ and $y'\in S^p_m\otimes X$, let
$$
y\oplus y'\,=\,\left[\begin{array}{cc} y & 0\\ 0 &
y'\end{array}\right]
$$
denote the corresponding block diagonal element of
$S^{p}_{k+m}\otimes X$. Suppose that for any integer $k\geq 1$,
the matrix space $S^p_k\otimes X$ is equipped with a norm $\norm{\
}_{\alpha}$ and that the natural embedding $y\mapsto y\oplus 0$
from $S^p_k\otimes_\alpha X$ into $S^p_{k+1}\otimes_\alpha X$ is
an isometry. Here $S^p_k\otimes_\alpha X$ denotes the vector space
$S^p_k\otimes X$ equipped with the norm $\norm{\ }_{\alpha}$ and
by the above assumption, there is no ambiguity in the use of a
single notation $\norm{\ }_\alpha$ (not depending on $k$) for all
these matrix norms. We say that $X$ equipped with $\norm{\
}_\alpha$ is an $L^p$-matricially normed space if
$S^p_1\otimes_{\alpha}X=X$ isometrically and if the following two
properties hold.

\begin{itemize}
\item [(P1)] For any integer $k\geq 1$, for any $c,d\in M_{k}$ and
for any $y\in S^p_k\otimes X$, we have
$$
\norm{cyd}_\alpha\,\leq\,
\norm{c}_\infty\norm{y}_\alpha\norm{d}_\infty,
$$
where $\norm{\ }_\infty$ denotes the operator norm.

\item [(P2)] For any integers $k,m\geq 1$, and for any $y\in
S^p_k\otimes X$ and $y'\in S^p_m\otimes X$, we have
$$
\norm{y\oplus y'}_\alpha\, =\,\bigl(\norm{y}_\alpha^p
+\norm{y'}_\alpha^p\bigr)^{\frac{1}{p}}.
$$
\end{itemize}

Let $u\colon S^p_k\to S^p_k$ be a linear map. Following \cite{P1},
the regular norm of $u$, denoted by $\rnorm{u}$, is defined as the
smallest constant $K\geq 0$ such that
$$
\bignorm{u\otimes I_F\colon S^p_k[F]\longrightarrow S^p_k[F]} \leq
K
$$
for any operator space $F$.

\begin{theorem}\label{3Char1} (\cite{JLM})
Let $X,Y$ be two $L^p$-matricially normed spaces, with associated
norms on the matrix spaces $S^p_k\otimes X$ and $S^p_k\otimes Y$
denoted by $\norm{\ }_{\alpha}$ and $\norm{\ }_{\beta}$,
respectively. Let $\sigma\colon X\to Y$ be a bounded operator, and
assume that there is a constant $C\geq 0$ such that
\begin{equation}\label{3Char2}
\bignorm{u\otimes \sigma \colon S^p_k\otimes_\alpha
X\longrightarrow S^p_k\otimes_\beta Y}\,\leq\, C\rnorm{u}
\end{equation}
for any $u\colon S^p_k\to S^p_k$ and any $k\geq 1$. Then there
exist an operator space $F$ and two bounded operators
$$
\tau\colon X\longrightarrow F\qquad\hbox{and}\qquad \rho\colon
F\longrightarrow Y
$$
such that $\sigma=\rho\circ\tau$, $\tau$ has dense range and for
any $k\geq 1$,
\begin{equation}\label{3Char3}
\bignorm{I_{S^p_k}\otimes \tau \colon S^p_k\otimes_\alpha
X\longrightarrow S^p_k[F]}\leq C
\qquad\hbox{and}\qquad\bignorm{I_{S^p_{k}}\otimes \rho \colon
S^p_k[F] \longrightarrow S^p_k\otimes_\beta Y}\leq 1.
\end{equation}
\end{theorem}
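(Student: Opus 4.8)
The plan is to realise the factorisation with $\tau$ a canonical dense-range map and $\rho$ an extension of $\sigma$, so that the two estimates in (\ref{3Char3}) collapse into a pair of one-sided inequalities for a single operator space norm to be constructed on $X$. Concretely, I would take $F$ to be the completion of $X$ (modulo the null space of the gauge constructed below) equipped with a suitable operator space structure, let $\tau\colon X\to F$ be the canonical map, which automatically has dense range, and define $\rho$ on $\tau(X)$ by $\rho(\tau(x))=\sigma(x)$. With these choices the left-hand estimate in (\ref{3Char3}) reads $\|z\|_{S^p_k[F]}\le C\|z\|_{\alpha}$ and the right-hand one reads $\|(I_{S^p_k}\otimes\sigma)z\|_{\beta}\le\|z\|_{S^p_k[F]}$ for every $k$ and every $z\in S^p_k\otimes X$; the continuity of $\rho$ on $\tau(X)$ is exactly the second inequality, and $\sigma=\rho\circ\tau$ holds by construction.

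Next I would construct $F$ as the \emph{maximal} operator space structure on $X$ subject to the upper constraint $\|z\|_{S^p_k[F]}\le C\|z\|_{\alpha}$ for all $k$. The existence of a largest such structure follows because the pointwise supremum of a family of operator space matrix norms all dominated by the fixed gauge $C\|\cdot\|_{\alpha}$ again satisfies Ruan's axioms; here the $L^p$-matricial axioms are precisely what make the constraint admissible, (P1) guaranteeing compatibility with the bimodule action of $M_k$ and (P2) guaranteeing the block-diagonal direct-sum axiom. Choosing $F$ maximal is the correct normalisation: it forces the left-hand inequality to hold by construction (so the bound on $\tau$ is free), while making the right-hand side $\|z\|_{S^p_k[F]}$ as large as the constraint allows, which is the most favourable situation for the contractivity of $\rho$.

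The heart of the proof is then the lower bound $\|(I_{S^p_k}\otimes\sigma)z\|_{\beta}\le\|z\|_{S^p_k[F]}$, and this is where hypothesis (\ref{3Char2}) must be used in full. Since $S^p_k[\cdot]$ is functorial and $\|v\otimes I_F\colon S^p_k[F]\to S^p_m[F]\|\le\rnorm{v}$ for every regular map $v$ by the very definition of the regular norm, the maximal norm $\|\cdot\|_{S^p_k[F]}$ admits a variational description obtained by saturating the gauge $C\|\cdot\|_{\alpha}$ under the action of regular contractions $v\colon S^p_k\to S^p_m$. I would then run a Hahn--Banach/minimax separation argument, exactly in the spirit of the use of the minimax principle \cite[Lemma 2.3.1]{ER2} in the proof of Theorem \ref{4Duality}: assuming $\|(I_{S^p_k}\otimes\sigma)z\|_{\beta}>\|z\|_{S^p_k[F]}$ should produce, after separation, a map $u\colon S^p_k\to S^p_k$ with $\rnorm{u}$ under control for which (\ref{3Char2}) is violated, a contradiction.

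The main obstacle is precisely this last step: pinning down the exact variational description of the maximal operator space norm $\|\cdot\|_{S^p_k[F]}$ and matching it, term by term, with the regular norm appearing in (\ref{3Char2}). Making the separation quantitatively compatible with the factor $C\rnorm{u}$ — rather than merely with $C\|\cdot\|_{\alpha}$, which the single choice $u=I_{S^p_k}$ would already supply — requires careful bookkeeping with the axioms (P1) and (P2) and with the factorisation formula (\ref{2Pisier8}) for $S^p_k[F]$. By contrast, the density of the range of $\tau$, the verification of Ruan's axioms for the supremum structure, and the passage from finite matrices to the completion are routine; the $\beta$-side variant of the construction (building from below the minimal operator space through which $\sigma$ factors contractively and dualising) provides a useful cross-check on the estimate.
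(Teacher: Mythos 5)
The paper contains no proof of this statement: it is quoted verbatim from \cite{JLM}. So your proposal can only be judged on its own merits, and on those merits it has two gaps, one of which is an actual error in the argument you do give, and the other of which is the entire mathematical content of the theorem.

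First, the existence of a \emph{maximal} operator space structure $F$ on $X$ subject to $\norm{z}_{S^p_k[F]}\leq C\norm{z}_{\alpha}$ is not established by your supremum argument. The pointwise supremum of a family of operator space structures $(F_i)$ does satisfy Ruan's axioms (it is the structure induced by the diagonal embedding of $X$ into $\oplus_\infty F_i$), but your constraint is not a constraint on the matrix norms $M_k(F)$: it is a constraint on the derived norms $\norm{\cdot}_{S^p_k[F]}$, which by definition (Remark \ref{2Duality4}) are \emph{infima} $\inf\{\norm{c}_{2p}\minnorm{z}\norm{d}_{2p}\}$ over factorisations $y=czd$. For the supremum structure this infimum runs over factorisations whose middle term must be controlled in all the $M_k(F_i)$ \emph{simultaneously}; a factorisation that is nearly optimal for $F_1$ need not be acceptable for $F_2$, and the functor $S^p_k[\,\cdot\,]$ does not commute with intersections of operator space structures (the discrepancy between $S^p[R\cap C]$ and $S^p[R]\cap S^p[C]$ is exactly the issue behind noncommutative Khintchine inequalities). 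So admissibility is not preserved under suprema, and the maximal admissible structure may fail to exist. Your invocation of (P1) and (P2) here conflates two levels: Ruan's axioms concern $M_k(F)$, where the direct-sum axiom is a maximum, while (P1) and (P2) are properties of the gauge $\alpha$ on $S^p_k\otimes X$, where the direct-sum property is an $\ell^p$-sum; neither helps the supremum pass the constraint. You also never exhibit a single admissible structure, so the family you maximise over could a priori be empty.

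Second, and decisively, the estimate $\bignorm{(I_{S^p_k}\otimes\sigma)z}_{\beta}\leq\norm{z}_{S^p_k[F]}$ is the only place where the hypothesis (\ref{3Char2}) can enter, and you leave it open, flagging it yourself as ``the main obstacle''. This is not bookkeeping: it is the theorem. Concretely, (\ref{3Char2}) gives the sandwich
\[
\bignorm{(I_{S^p_k}\otimes\sigma)z}_{\beta}\,\leq\,\gamma_k(z):=\sup\bigl\{\norm{(u\otimes\sigma)z}_{\beta}\,:\,\rnorm{u}\leq 1\bigr\}\,\leq\, C\norm{z}_{\alpha},
\]
and what must be proved is that the family $(\gamma_k)_k$ (or a suitable modification of it) is of the form $\norm{\cdot}_{S^p_k[F]}$ for an actual operator space $F$ --- a Ruan-type representation theorem for operator space valued Schatten norms, which is precisely the technical core of \cite{JLM}. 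Your plan presupposes exactly this point (the ``variational description'' of the maximal norm and its term-by-term match with the regular norm) instead of proving it. As written, the proposal therefore reduces the statement to an assertion essentially equivalent to the statement itself: the existence of an operator space whose Schatten norms sit between $\bignorm{(I_{S^p_k}\otimes\sigma)(\cdot)}_{\beta}$ and $C\norm{\cdot}_{\alpha}$.
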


\begin{remark}\label{2Sums}

\

(1)  Let $\norm{\ }_{\alpha_0}$  and  $\norm{\ }_{\alpha_1}$ be
norms on the matrix spaces $S^p_k\otimes X$ such that $X$ equipped
with $\norm{\ }_{\alpha_0}$ (resp. $\norm{\ }_{\alpha_1}$) is an
$L^p$-matricially normed space. We define a norm $\norm{\ }_\beta$
on each $S^p_k\otimes X$ by the following formula. For any $y\in
S^p_k\otimes X$,
$$
\norm{y}_\beta\,=\,\inf\bigl\{\bigl(\norm{y_0}_{\alpha_0}^p\,
+\,\norm{y_1}_{\alpha_1}^p\bigr)^{\frac{1}{p}}\, :\, y_0,y_1\in
S^p_k\otimes X, \ y=y_0+y_1\bigr\}.
$$
It turns out that $X$ equipped with $\norm{\ }_\beta$ is an
$L^p$-matricially normed space. This structure is obtained as the
`sum' of the ones given by $S^p_k\otimes_{\alpha_0} X$ and
$S^p_k\otimes_{\alpha_0} X$, and we simply write
$$
S^p_k\otimes_\beta X \, =\, S^p_k\otimes_{\alpha_0} X\, +_p\,
S^p_k\otimes_{\alpha_1} X
$$
in this case.

It is obvious that $\norm{\ }_\beta$ satisfies (P1) and the
inequality ``$\leq$" in (P2). To prove the reverse inequality
``$\geq$" in (P2), take $y\in S_k^p\otimes X$ and $y'\in
S_m^p\otimes X$  and assume that
$$
\left\Vert\left[\begin{array}{cc} y & 0\\ 0 & y'\end{array}\right]
\right\Vert_\beta <1.
$$
Then there exists a decomposition
$$
\left[\begin{array}{cc} y & 0\\ 0 & y'\end{array}\right] \, =\,
\left[\begin{array}{cc} y^{0}_{11} & y^{0}_{12} \\ y^{0}_{21} &
y^{0}_{22} \end{array}\right] \, +\,
\left[\begin{array}{cc} y^{1}_{11}  & y^{1}_{12} \\
y^{1}_{21}  & y^{1}_{22} \end{array} \right]\quad\hbox{with}\quad
\left\Vert\left[\begin{array}{cc} y^{0}_{11} & y^{0}_{12} \\
y^{0}_{21} & y^{0}_{22}
\end{array}\right]\right\Vert_{\alpha_0}^p\,+\, \left\Vert
\left[\begin{array}{cc} y^{1}_{11} & y^{1}_{12} \\
y^{1}_{21} & y^{1}_{22}
\end{array}\right]\right\Vert_{\alpha_1}^p\, <1.
$$
Since
$$
\left[\begin{array}{cc} y^{0}_{11} & 0 \\
0 & y^{0}_{22}
\end{array}\right]\,=\,\frac{1}{2}\Biggl(
\left[\begin{array}{cc} y^{0}_{11} & y^{0}_{12} \\
y^{0}_{21} & y^{0}_{22}
\end{array}\right]\, +\, \left[\begin{array}{cc} I_k & 0 \\
0& -I_m
\end{array}\right]\left[\begin{array}{cc} y^{0}_{11} & y^{0}_{12} \\
y^{0}_{21} & y^{0}_{22}
\end{array}\right]\left[\begin{array}{cc}I_k & 0 \\
0& -I_m
\end{array}\right]\Biggr),
$$
we obtain by applying (P1) and (P2) to $\norm{\ }_{\alpha_0}$ that
$$
\norm{y^{0}_{11}}_{\alpha_0}^p +
\norm{y^{0}_{22}}_{\alpha_0}^p\,\leq \,
\left\Vert\left[\begin{array}{cc} y^{0}_{11} & y^{0}_{12} \\
y^{0}_{21} & y^{0}_{22}
\end{array}\right]\right\Vert_{\alpha_0}^p.
$$
Similarly,
$$
\norm{y^{1}_{11}}_{\alpha_1}^p +
\norm{y^{1}_{22}}_{\alpha_1}^p\,\leq \,
\left\Vert\left[\begin{array}{cc} y^{1}_{11} & y^{1}_{12} \\
y^{1}_{21} & y^{1}_{22}
\end{array}\right]\right\Vert_{\alpha_1}^p.
$$
Since $y=y^{0}_{11} + y^{1}_{11}$ and $y'=y^{0}_{22} +
y^{1}_{22}$, we deduce that
$$
\norm{y}_{\beta}^p + \norm{y'}_{\beta}^p\,\leq\,
\norm{y^{0}_{11}}_{\alpha_0}^p + \norm{y^{0}_{22}}_{\alpha_0}^p +
\norm{y^{1}_{11}}_{\alpha_1}^p +
\norm{y^{1}_{22}}_{\alpha_1}^p\,<1,
$$
which proves the desired inequality.

\smallskip
(2) Let $F$ be an operator space and recall that we have
$$
S^p_k\{F\}_\ell = S^p_k\otimes_{\alpha_p^\ell}
F\qquad\hbox{and}\qquad S^p_k\{F\}_r = S^p_k\otimes_{\alpha_p^r}
F.
$$
According to \cite[Section 2]{JLM}, $F$ equipped with $\norm{\
}_{\alpha_p^\ell}$ (resp. $\norm{\ }_{\alpha_p^r}$) is an
$L^p$-matricially normed space. In the sequel we will use the
$L^p$-matricially normed space structure on $\ell^2$ defined as
the sum of $S_k^p\{R\}_\ell$ and $S_k^p\{C\}_r$.
\end{remark}

The following is independent of Theorem \ref{3Char1} and will be
used in both proofs of Theorem \ref{4Main}

\begin{corollary}\label{4Rigid}
Let $1<p\not= 2<\infty$ and suppose that  $u\colon S^p_k\to S^p_k$
admits a rigid factorisation. Then
$$
\bignorm{u\otimes I_{\ell^2}\colon S^p_k\{R\}_\ell \longrightarrow
S_k^p\{R\}_\ell \, +_p\, S_k^p\{C\}_r}\leq 4.
$$
\end{corollary}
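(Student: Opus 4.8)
The plan is to unfold the rigid factorisation $u=S^*T$ and then apply the tensorial decomposition of isometries provided by Proposition~\ref{4Tensor} to each of the two isometries $T\colon S^p_k\to L^p(M)$ and $S\colon S^{p'}_k\to L^{p'}(M)$. Writing $T=T_1+T_2$ and $S=S_1+S_2$ as in that proposition, we obtain
\begin{equation*}
u\,=\,S^*T\,=\,(S_1^*+S_2^*)(T_1+T_2)\,=\,S_1^*T_1+S_1^*T_2+S_2^*T_1+S_2^*T_2.
\end{equation*}
The idea is to estimate each of the four pieces $S_i^*T_j\otimes I_{\ell^2}$ separately as a map into the appropriate target, so that each contributes at most $1$ to the $+_p$ norm, giving the bound $4$. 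The key is that Proposition~\ref{4Tensor} controls $T_j\otimes I_F$ going into an $\ell$-space or an $r$-space (with an opposite structure on $F$), and dually $S_i^*\otimes I_F$ will be controlled by the duality results of Theorem~\ref{4Duality}, applied with $F=R$ so that $F^{*\,\rm op}=R$ again.

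The core step I would carry out first is to fix $F=R$ and track where each composite lands. For the pieces built from $T_1$ (which maps $L^p\{N;R\}_\ell$ contractively into $L^p\{M;R\}_\ell$ by \eqref{4Tensor1}), I want $S_i^*\otimes I_R$ to map $L^p\{M;R\}_\ell$ contractively back into a matricial $R$- or $C$-structure on $S^p_k$; here the adjoint of the isometry $S_i$ acts, and I invoke the $r$-version of Theorem~\ref{4Duality} recorded in Remark~\ref{4Rem}(4), together with Remark~\ref{4Complement}, to handle the transpose correctly. For the pieces built from $T_2$, estimate \eqref{4Tensor2} sends $L^p\{N;R\}_\ell$ into $L^p\{M;R^{\rm op}\}_r=L^p\{M;C\}_r$ (using $R^{\rm op}=C$), and then the companion duality statement for the $C$-column case feeds this into the $S_k^p\{C\}_r$ summand. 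Matching the two isometry-pieces from $S$ against the two from $T$ in this way routes $S_1^*T_1$ and $S_2^*T_1$ toward the $S^p_k\{R\}_\ell$ summand and $S_1^*T_2,S_2^*T_2$ toward the $S^p_k\{C\}_r$ summand, each with norm at most one.

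The main obstacle I anticipate is the careful bookkeeping of adjoints and opposite structures: $S^*$ involves transposing an isometry between the \emph{conjugate} spaces $L^{p'}$, so to apply Proposition~\ref{4Tensor} one must first invoke it for $S$ (an isometry on $L^{p'}$, where $p'$ and $p$ swap roles relative to the critical exponent $2$), then dualise. Concretely, the contraction $S_i\otimes I_F\colon L^{p'}\{N;F\}_\ell\to L^{p'}\{M;\cdot\}$ must be transposed using the isometric dualities of Theorem~\ref{4Duality}, and one has to check that the duality pairing \eqref{4Duality1} is exactly the one intertwining $u$ with $u\otimes I_{\ell^2}$, i.e. that the matricial action on $S^p_k$ is compatible with the operator-space pairing of Proposition~\ref{2Duality2}. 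Verifying that each adjoint $S_i^*\otimes I_R$ is a contraction into the correct summand—respecting the $R$ versus $C$ and the $\ell$ versus $r$ dichotomy—is where all the structural identifications \eqref{4Opposite}, \eqref{4Tensor1}, \eqref{4Tensor2}, and the $r$-version of Theorem~\ref{4Duality} must be combined without a sign or transpose error.

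Once the four composites are each shown to be contractions into $S^p_k\{R\}_\ell$ or $S^p_k\{C\}_r$ as appropriate, the definition of the $+_p$ sum norm in Remark~\ref{2Sums}(1) together with the triangle inequality for the four terms yields
\begin{equation*}
\bignorm{u\otimes I_{\ell^2}\colon S^p_k\{R\}_\ell\longrightarrow S^p_k\{R\}_\ell\,+_p\,S^p_k\{C\}_r}\,\leq\,4,
\end{equation*}
which is the claim. I would present the argument by first stating the two dual estimates for $S_i^*$, then assembling the four-term bound, keeping the opposite-structure identifications explicit so the reader can check that each term really lands in its designated summand.
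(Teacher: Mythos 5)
Your strategy is exactly the paper's: decompose both isometries via Proposition \ref{4Tensor}, expand $u=S_1^*T_1+S_2^*T_1+S_1^*T_2+S_2^*T_2$, dualise the $S_i$-factors using Theorem \ref{4Duality} (and its $r$-version from Remark \ref{4Rem}\,(4)) together with Proposition \ref{2Duality2}, and bound each composite by $1$. But your final routing of the four terms is wrong, and precisely at the spot you yourself flagged as delicate. You claim $S_2^*T_1$ lands in $S^p_k\{R\}_\ell$ and $S_2^*T_2$ in $S^p_k\{C\}_r$; both are backwards. The reason is that $S_2$ comes from the \emph{anti-representation} part of Yeadon's decomposition, so it exchanges the two structures: by Remark \ref{4Complement}, $S_2\otimes I_C$ maps $S^{p'}_k\{C\}_r$ into $L^{p'}\{M;C^{\rm op}\}_\ell=L^{p'}\{M;R\}_\ell$, and dualising this (Theorem \ref{4Duality} plus Proposition \ref{2Duality2}, with $C^{*\,\rm op}=C$) yields $S_2^*\colon L^p\{M;R\}_\ell\to S^p_k\{C\}_r$, so that $S_2^*T_1$ goes to the $C$-summand. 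Likewise $S_2\otimes I_R$ maps $S^{p'}_k\{R\}_\ell$ into $L^{p'}\{M;C\}_r$ by \eqref{4Tensor2}, and dualising via the $r$-version of Theorem \ref{4Duality} yields $S_2^*\colon L^p\{M;C\}_r\to S^p_k\{R\}_\ell$, so that $S_2^*T_2$ goes to the $R$-summand. The correct rule is a parity count: each index-$2$ factor flips $(\ell,R)\leftrightarrow(r,C)$, hence $S_1^*T_1,\,S_2^*T_2$ land in $S^p_k\{R\}_\ell$ while $S_2^*T_1,\,S_1^*T_2$ land in $S^p_k\{C\}_r$. As written, your two $S_2$-claims are not justified by anything you cite and fail in general: to get $S_2^*\otimes I_R$ contractive from $L^p\{M;R\}_\ell$ into $S^p_k\{R\}_\ell$ you would need $S_2\otimes I_R$ contractive from $S^{p'}_k\{R\}_\ell$ into $L^{p'}\{M;R\}_\ell$, which is exactly what Proposition \ref{4Tensor} does \emph{not} give for the anti-representation piece (think of $S$ being transposition, a typical isometry with $S_1=0$: transposition does not respect the $\ell$-structure, which is the whole point of the $\ell$/$r$ dichotomy).

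Fortunately the defect is local and does not change the conclusion: the $+_p$ estimate via Remark \ref{2Sums}\,(1) only requires that each of the four terms be a contraction into \emph{one} of the two summands, so with the corrected routing you still get
\begin{equation*}
\bignorm{u\otimes I_{\ell^2}\colon S^p_k\{R\}_\ell\longrightarrow S^p_k\{R\}_\ell\,+_p\,S^p_k\{C\}_r}\,\leq\,
\bigl(2^p+2^p\bigr)^{\frac{1}{p}}\,\leq\,4.
\end{equation*}
Once you swap the targets of the two $S_2$-terms, your argument coincides with the paper's proof.
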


\begin{proof}
Suppose that $u\colon S^p_k\to S^p_k$ admits a rigid
factorisation. By definition there exist a semifinite von Neumann
algebra $M$ and two linear isometries
$$
T\colon S^p_k\longrightarrow L^p(M)\qquad\hbox{and}\qquad S\colon
S^{p'}_k\longrightarrow L^{p'}(M)
$$
such that $u=S^*T$. According to Proposition \ref{4Tensor}, we
have a decomposition $T=T_1+T_2$ for some $T_1,T_2\colon S^p_k\to
L^p(M)$ satisfying
$$
\bignorm{T_1\otimes I_{F}\colon S^p_k\{F\}_\ell\to
L^p\{M;F\}_\ell}\leq 1  \quad\hbox{and}\quad \bignorm{T_2\otimes
I_{F}\colon S^p_k\{F\}_\ell\to L^p\{M;F^{\rm op}\}_r}\leq 1
$$
for any operator space $F$. Likewise we have a decomposition
$S=S_1+S_2$ for some $S_1,S_2\colon S^{p'}_k\to L^{p'}(M)$
satisfying
$$
\bignorm{S_1\otimes I_{G}\colon S^{p'}_k\{G\}_\ell \to
L^{p'}\{M;G\}_\ell}\leq 1  \quad\hbox{and}\quad
\bignorm{S_2\otimes I_{G}\colon S^{p'}_k\{G\}_\ell \to
L^p\{M;G^{\rm op}\}_r}\leq 1
$$
for any operator space $G$. By Remark \ref{4Complement}, we also
have
$$
\bignorm{S_1\otimes I_{G}\colon S^{p'}_k\{G\}_r\to
L^{p'}\{M;G\}_r}\leq 1  \quad\hbox{and}\quad \bignorm{S_2\otimes
I_{G}\colon S^{p'}_k\{G\}_r\to L^p\{M;G^{\rm op}\}_\ell}\leq 1.
$$
Mixing the two decompositions, we have
$$
u\,=\, S_1^{*}T_1\,+\,S_2^{*}T_1\,+\,S_1^{*}T_2\,+\,S_2^{*}T_2.
$$
Since $S_1\otimes I_{R}$ is a contraction from
$S^{p'}_k\{R\}_\ell$ into $L^{p'}\{M;R\}_\ell$, it follows from
Theorem \ref{4Duality} that $S_1^{*}\otimes I_{R}$ extends to a
contraction from $L^{p}\{M;R\}_\ell$ into $S^{p}_k\{R\}_\ell$.
Consequently,
$$
\bignorm{S_1^{*}T_1\otimes I_{R}\colon S^p_k\{R\}_\ell
\longrightarrow S^p_k\{R\}_\ell}\leq 1.
$$
Likewise, since $S_2\otimes I_{R}$ is a contraction from
$S^{p'}_k\{C\}_r$ into $L^{p'}\{M;R\}_\ell$, it follows from
Theorem \ref{4Duality} and Proposition \ref{2Duality2} that
$S_2^{*}\otimes I_{R}$ extends to a contraction from
$L^{p}\{M;R\}_\ell$ into $S^{p}_k\{C\}_r$. Consequently,
$$
\bignorm{S_2^{*}T_1\otimes I_{R}\colon
S^p_k\{R\}_\ell\longrightarrow S^p_k\{C\}_r}\leq 1.
$$
Similarly we obtain that
$$
\bignorm{S_1^{*}T_2\otimes I_{R}\colon S^{p}_k\{R\}_\ell\to
S^{p}_k\{C\}_r}\leq 1  \quad\hbox{and}\quad
\bignorm{S_2^{*}T_2\otimes I_{R}\colon S^{p}_k\{R\}_\ell\to
S^{p}_k\{R\}_\ell}\leq 1.
$$
The result follows at once.
\end{proof}

\begin{proof} (Of Theorem \ref{4Main}.)
By duality we may suppose that $p>2$. Following Remark
\ref{2Sums}, let $\norm{\ }_\beta$ denote the matrix norms on
$\ell^2$ given by
$$
S^p_k\otimes_\beta \ell^2 \, =\, S_k^p\{R\}_\ell \, +_p\,
S_k^p\{C\}_r.
$$
Assume that for any integer $k\geq 1$, every completely positive
contraction $S^p_k\to S^p_k$ admits a rigid factorisation. Let
$u\colon S^p_k\to S^p_k$ be an arbitrary linear map. By \cite{P1}
and \cite[Cor. 8.7]{P5}, one can find four completely positive
maps $u_1,u_2,u_3,u_4\colon S^p_k\to S^p_k$ such that $u=(u_1-u_2)
+i(u_3-u_4)$ and for any $j=1,\ldots,4$,
$\norm{u_j}\leq\rnorm{u}$. By Corollary \ref{4Rigid} we deduce
that
$$
\bignorm{u\otimes I_{\ell^2}\colon S^p_k\{R\}_\ell\longrightarrow
S^p_k\otimes_\beta \ell^{2}}\leq 16\rnorm{u}.
$$
Let us apply Theorem \ref{3Char1} with $X=Y=\ell^{2}$, and
$\sigma=I_{\ell^{2}}$. Thus there exist an operator space $F$ and
two bounded operators $\tau\colon\ell^{2}\to F$ and $\rho\colon
F\to \ell^{2}$ such that $\rho\circ\tau=I_{\ell^{2}}$ and for any
$k\geq 1$,
$$
\bignorm{I_{S^p_k}\otimes \tau \colon
S^p_k\{R\}_\ell\longrightarrow S^p_k[F]}\leq 16
\qquad\hbox{and}\qquad\bignorm{I_{S^p_{k}}\otimes \rho \colon
S^p_k[F] \longrightarrow S^p_k\otimes_\beta \ell^{2}}\leq 1.
$$
Moreover we can assume that $F$ is equal to the range of $\tau$
and hence,  $\rho=\tau^{-1}$. We can now conclude and get to a
contradiction as in the proof of \cite[Theorem 2.6]{JLM}. We only
give a sketch of the argument and refer the reader to the latter
paper for details.

By means of (\ref{2Pisier5}) and (\ref{2Pisier8}), the above
estimates imply that
$$
\norm{\tau^{-1}}\leq 1\qquad\hbox{and}\qquad
\bignorm{I_{\ell^2_k}\otimes \tau \colon C_k \otimes_h R
\longrightarrow R_k\bigl(1-\tfrac{1}{p}\bigr)\otimes_h F}\,\leq\,
16
$$
for any $k\geq 1$. Using the well-known isometric identifications
$$
C_k\otimes_h R_k \simeq M_k\qquad\hbox{and}\qquad
CB\bigl(C_k,R_k\bigl(1-\tfrac{1}{p}\bigr)\bigr)\simeq S^{2p}_k,
$$
we can deduce that $\norm{v}_{2p}\leq 16\norm{v}_\infty$ for any
linear mapping $v\colon\ell^2_k\to\ell^2_k$. This is false if
$k>16^{2p}$.
\end{proof}

\begin{remark}
So far we have only considered noncommutative $L^p$-spaces
associated with a semifinite trace. In fact semifiniteness was
necessary to define the spaces $L^p\{M;F\}_\ell$ (or
$L^p\{M;F\}_r$), and hence the duality results stated in Section 3
make sense only in the tracial setting. We wish to indicate
however that Corollary \ref{4Rigid} and Theorem \ref{4Main} extend
to the non tracial case.

More precisely, let $M$ be an arbitrary von Neumann algebra and
for any $1\leq p\leq \infty$, let $L^p(M)$ denote the
noncommutative $L^p$-space constructed by Haagerup \cite{H0}. We
refer the reader to \cite{T} for a complete description of these
spaces, and to \cite{PX2} or \cite{JX} for a brief presentation.
We recall that if $M$ is semifinite and $\varphi$ is a n.s.f.
trace on $M$, then Haagerup's space $L^p(M)$ is isometrically
isomorphic to the usual tracial $L^p$-space (see Section 2). Our
extension of Corollary \ref{4Rigid} is as follows: for any
$1<p\not=2<\infty$, for any integer $k\geq 1$ and for any pair of
isometries
\begin{equation}\label{4NonT}
T\colon S^p_k\longrightarrow L^p(M)\qquad\hbox{and}\qquad S\colon
S^{p'}_k\longrightarrow L^{p'}(M),
\end{equation}
we have
$$
\bignorm{S^*T\otimes I_{\ell^2}\colon S^p_k\{R\}_\ell
\longrightarrow S_k^p\{R\}_\ell \, +_p\, S_k^p\{C\}_r}\leq 4.
$$
Likewise, Theorem \ref{4Main} extends as follows: for $k\geq 1$
large enough, there exists a completely positive contraction
$u\colon S^p_k\to S^p_k$ such that whenever $M$ is a (not
necessarily semifinite) von Neumann algebra there is no pair
$(T,S)$ of isometries as in (\ref{4NonT}) such that $u=S^*T$.

The proofs of these extensions are similar to the ones given above
in the tracial case, up to technical details. They require the
extension of Yeadon's theorem obtained in \cite[Th. 3.1]{JRS} as
well as the duality techniques from \cite[Section 1]{JX}. We skip
the details.
\end{remark}

\begin{remark}
Let $(\Omega,\mu)$ be a measure space and let $u\colon
L^p(\Omega)\to L^p(\Omega)$ be a contraction (with
$1<p\not=2<\infty$). The following assertions are equivalent.

\begin{itemize}
\item [(i)] $u$ admits a rigid factorisation.

\item [(ii)] There exist a measure space $(\Omega',\mu')$ and two
linear isometries $T\colon L^{p}(\Omega)\to L^{p}(\Omega')$ and
$S\colon L^{p'}(\Omega)\to L^{p'}(\Omega')$ such that $u=S^*T$
({\it commutative} rigid factorisation).

\item [(iii)] For any integer $k\geq 1$,
$$
\bignorm{u\otimes I_{\ell^\infty_k}\colon
L^p(\Omega;\ell^\infty_k) \longrightarrow
L^p(\Omega;\ell^\infty_k)} \leq 1.
$$
(Equivalently, $u$ is regular and $\rnorm{u}\leq 1$, see
\cite{P1}.)

\item [(iv)] There exists a positive contraction $v$ on
$L^p(\Omega)$ such that $\vert u(f)\vert\leq v(\vert f\vert)$ for
any $f\in L^p(\Omega)$.
\end{itemize}

The equivalence of (ii) and (iv) follows from \cite[Section
3]{Peller}, and the equivalence of (iii) and (iv) is well-known
(see e.g. \cite{MN}). So we only need to show that (i) implies
(iii). For this purpose, assume that $u=S^*T$, where $T\colon
L^p(\Omega)\to L^p(M)$ and $S\colon L^{p'}(\Omega)\to L^{p'}(M)$
are isometries. For any integer $k\geq 1$, let
$L^p(M;\ell^\infty_k)$ and $L^{p'}(M;\ell^1_k)$ be the operator
space valued spaces introduced in \cite{J}. Arguing as in the
proof of Proposition \ref{4Tensor} it is not hard to show that
$$
T\otimes I_{\ell^\infty_k}\colon L^p(\Omega;\ell^\infty_k)
\longrightarrow L^p(M;\ell^\infty_k)\qquad\hbox{and}\qquad
S\otimes I_{\ell^1_k}\colon L^{p'}(\Omega;\ell^1_k)
\longrightarrow L^{p'}(M;\ell^1_k)
$$
are contractions. Using \cite[Prop. 3.6]{J}, we deduce that
$u\otimes I_{\ell^\infty_k}$ is a contraction on
$L^p(\Omega;\ell^\infty_k)$.
\end{remark}

\medskip
\section{A concrete example}
The proof of Theorem \ref{4Main} given above has a serious
drawback. Indeed, it does not show any concrete example of a
completely positive contraction $u\colon S^p_k\to S^p_k$ without a
rigid factorisation. The aim of this section is to present such an
example, thus giving another proof of that theorem. This second
proof does not use Theorem \ref{3Char1}.

\bigskip
Throughout we let $1<p<\infty$, we consider an integer $k\geq 1$.
Let $u_1\colon S^p_k\to S^p_k$ be defined by letting $u_1(E_{i1})=
k^{-\frac{1}{2p}}E_{ii}$ for any $i\geq 1$  and $u_1(E_{ij})= 0$
for any $j\geq 2$ and any $i\geq 1$. This can be written as
$$
u_1(x)\,=\,\sum_{i=1}^k a_{i}^{*} x b_i,\qquad x\in S^p_k,
$$
where
$$
a_i=E_{ii}\qquad\hbox{and}\qquad b_i= k^{-\frac{1}{2p}}
E_{1i},\qquad 1\leq i\leq k.
$$
Consider the three linear maps $u_2,u_3,u_4\colon S^p_k\to S^p_k$
defined by letting
$$
u_2(x)\,=\,\sum_{i=1}^k b_{i}^{*} x a_i,\quad
u_3(x)\,=\,\sum_{i=1}^k a_{i}^{*} x a_i,\quad\hbox{and}\quad
u_4(x)\,=\,\sum_{i=1}^k b_{i}^{*} x b_i
$$
for any $x\in S^p_k$. Then $u_3$ is the canonical diagonal
projection taking any $x=[x_{ij}]\in S^p_k$ to the diagonal matrix
$\sum_i x_{ii} E_{ii}$. Thus $\norm{u_3}=1$. Next, $u_4$ is the
rank one operator taking any $x=[x_{ij}]\in S^p_k$ to
$k^{-\frac{1}{p}} x_{11} I_k$, where $I_k$ denotes the identity
matrix. Since $\norm{I_k}_p= k^{\frac{1}{p}}$, we have
$\norm{u_4}=1$. According to \cite[Theorem 8.5]{P5} and \cite{P1},
this implies that $\rnorm{u_1}\leq 1$. In particular, $u_1$ is a
contraction. Likewise, $u_2$ is a contraction.

We now consider the average
\begin{equation}\label{5defu1}
u=\frac{1}{4}\,\bigl(u_1+u_2+u_3+u_4\bigr)
\end{equation}
of these four maps. Then $u\colon S^p_k\to S^p_k$ is a
contraction. Moreover we have
\begin{equation}\label{5defu2}
u(x)\,=\,\frac{1}{4}\,\sum_{i=1}^k (a_{i} +b_i)^{*} x (a_i
+b_i),\qquad x\in S^p_k.
\end{equation}
Hence $u$ is completely positive.

\begin{theorem}\label{5Main}
Assume that $1<p<\infty$, and let $u\colon S^p_k\to S^p_k$ be the
completely positive contraction defined by (\ref{5defu1}) and/or
(\ref{5defu2}).
\begin{enumerate}
\item [(1)] We have
$$
\lim_{k\to\infty}\,\bignorm{u\otimes I_{\ell^2_k}\colon
S_k^p\{R_k\}_\ell\longrightarrow S_k^p\{R_k\}_\ell \,+_p\,
S_k^p\{C_k\}_r}\,=\infty.
$$
\item [(2)] Assume that $p\not=2$. Then for $k$ large enough, the
operator $u$ does not admit a rigid factorisation.
\end{enumerate}
\end{theorem}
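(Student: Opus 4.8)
The whole statement rests on one explicit test vector, so the plan is to estimate $u\otimes I$ on it from above on the domain side and from below on the target side. I would first record that (2) is a formal consequence of (1): if $u$ admitted a rigid factorisation, then Corollary \ref{4Rigid} would bound the operator norm appearing in (1) by $4$ (Corollary \ref{4Rigid} is stated with $R,C$, but its proof applies verbatim with the finite-dimensional $R_k,C_k$, since the duality $L^p\{M;R_N\}_\ell^*\simeq L^{p'}\{M;R_N\}_\ell$ used there is established for all finite $N$ inside Theorem \ref{4Duality}); letting $k\to\infty$ and invoking (1) gives a contradiction once $k$ is large. The hypothesis $p\neq2$ enters only here, through Yeadon's theorem inside Corollary \ref{4Rigid}. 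So the real work is (1), and I may assume $p\geq2$, the case $p<2$ being entirely analogous.

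For the test vector, put $y=\sum_{i=1}^k E_{i1}\otimes e_i\in S_k^p\otimes\ell_k^2$. The factorisation $y=I_k\,z\,E_{11}$ with $z=\sum_i E_{i1}\otimes e_i$ satisfies $\minnorm{z}=\bignorm{\sum_i E_{i1}E_{1i}}_\infty^{1/2}=1$ and $\norm{E_{11}}_p=1$, so $\norm{y}_{S_k^p\{R_k\}_\ell}\leq1$. For the image I would use that $u_1(E_{i1})=k^{-1/(2p)}E_{ii}$ for every $i$, whereas a direct check shows that $u_2,u_3,u_4$ kill $E_{i1}$ unless $i=1$. Hence $(u\otimes I)(y)=\tfrac14k^{-1/(2p)}w_0+g\otimes e_1$, where $w_0=\sum_{i=1}^k E_{ii}\otimes e_i$ is the diagonal $\ell^2_k$-valued matrix and $g=\tfrac14(k^{-1/(2p)}E_{11}+E_{11}+k^{-1/p}I_k)$ satisfies $\norm{g}_p\leq1$; the term $g\otimes e_1$ is harmless, its norm in the sum space being $\leq\norm{g}_p\leq1$ by Lemma \ref{4Pisier3}.

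The crux is a lower bound for $\norm{w_0}$ in the sum space. The dual of $S_k^p\{R_k\}_\ell+_pS_k^p\{C_k\}_r$ is the intersection $S_k^{p'}\{R_k\}_\ell\cap S_k^{p'}\{C_k\}_r$ normed by $\bigl(\norm{\cdot}^{p'}_{S_k^{p'}\{R_k\}_\ell}+\norm{\cdot}^{p'}_{S_k^{p'}\{C_k\}_r}\bigr)^{1/p'}$; here I combine the elementary Banach-space duality $(X+_pY)^*=X^*\cap Y^*$ with the identifications $S_k^p\{R_k\}_\ell^*=S_k^{p'}\{R_k\}_\ell$ and $S_k^p\{C_k\}_r^*=S_k^{p'}\{C_k\}_r$ coming from Theorem \ref{4Duality}, its $r$-analogue (Remark \ref{4Rem}(4)) and Proposition \ref{2Duality2}. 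Testing against $\zeta=\sum_i E_{ii}\otimes e_i$, the pairing (\ref{4Duality1}) gives $\langle w_0,\zeta\rangle=\sum_i tr(E_{ii}E_{ii})=k$, while diagonal factorisations as above give $\norm{\zeta}_{S_k^{p'}\{R_k\}_\ell}\leq k^{1/p'}$ and $\norm{\zeta}_{S_k^{p'}\{C_k\}_r}\leq k^{1/p'}$, so that the norm of $\zeta$ in the intersection is $\leq(2k)^{1/p'}$. This yields $\norm{w_0}\geq k/(2k)^{1/p'}=2^{-1/p'}k^{1/p}$, whence $\norm{(u\otimes I)(y)}\geq\tfrac14 2^{-1/p'}k^{1/(2p)}-1$. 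Dividing by $\norm{y}\leq1$ proves (1).

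The main obstacle is precisely this lower bound: one must verify that the dual of the $\ell^p$-sum is the claimed intersection and that the two duality identifications (for the $\ell$-row and the $r$-column structures) hold through a single compatible pairing, so that one functional $\zeta$ controls every decomposition $w_0=w_0'+w_0''$. A related subtlety, which is in fact what makes (2) go through, is that $\zeta$ is finitely supported, so its norm in the intersection is unchanged if $R_k,C_k$ are enlarged to $R,C$; consequently the divergence in (1) persists with target $S_k^p\{R\}_\ell+_pS_k^p\{C\}_r$, the exact space of Corollary \ref{4Rigid}, which is what the contradiction in the first paragraph requires.
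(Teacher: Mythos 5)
Your proof is correct, and it reaches the paper's conclusion by a genuinely different argument at the decisive step. What coincides with the paper: the reduction of (2) to (1) via Corollary \ref{4Rigid}, the test vector $y=\sum_{i=1}^k E_{i1}\otimes e_i$ (the paper's $w=\sum_i e_i\otimes e_i\otimes e_1$), the computation of $(u\otimes I_{\ell^2_k})(y)$, and the resulting growth rate $k^{1/(2p)}$; you are in fact more explicit than the paper about the mismatch between the spaces $R_k,C_k$ in statement (1) and the spaces $R,C$ in Corollary \ref{4Rigid}, and both of your ways of bridging it work. What differs is the lower bound on the image. The paper proves Lemma \ref{5Elem} (diagonal projections attached to completely $1$-unconditional bases are completely contractive) and Lemma \ref{5Diag} (the triple-diagonal projection $P$ is contractive on $S^p_k\{R_k\}_\ell$, and likewise on $S^p_k\{C_k\}_r$, with range isometrically $\ell^p_k$), via the Haagerup tensor descriptions (\ref{2Pisier5})--(\ref{2Pisier6}) and operator-space interpolation; it then applies $P$ to the image and reads off an $\ell^p_k$-norm, which controls the $+_p$-sum because $P$ is contractive on each summand. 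You instead dualize: you test against the diagonal element $\zeta=\sum_j E_{jj}\otimes e_j$, bound $\norm{\zeta}_{S^{p'}_k\{R_k\}_\ell}$ and $\norm{\zeta}_{S^{p'}_k\{C_k\}_r}$ by $k^{1/p'}$ with explicit factorizations, and combine Proposition \ref{2Duality2} for the $\ell$- and $r$-structures (both induced by the single pairing (\ref{4Duality1}) --- the compatibility you flag, which indeed holds) with H\"older applied to an arbitrary decomposition $w_0=w_0'+w_0''$. This bypasses Lemmas \ref{5Elem} and \ref{5Diag} entirely, so your route is shorter and softer --- its only inputs are Proposition \ref{2Duality2} and elementary factorizations --- at the cost of a harmless constant and of the structural information (the diagonal of these spaces is exactly $\ell^p_k$ and the projection onto it is contractive) which the paper's lemmas establish and which has independent interest. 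One caveat: ``the case $p<2$ is entirely analogous'' conceals a real change. For $p<2$ the norm $\alpha_p^\ell$ has the factorized form $\norm{a}_q\minnorm{z}\norm{b}_2$, your factorization $y=I_k\,z\,E_{11}$ no longer gives $\norm{y}\leq 1$, and in fact $\norm{y}_{S^p_k\{R_k\}_\ell}=k^{1/p-1/2}$; the divergence rate then degrades to $k^{1/2-1/(2p)}$, which still tends to infinity, so both assertions survive. (The paper's own write-up glosses the same point: its norm computation for $w$ invokes (\ref{5Haag}), a $p\geq 2$ identification.)
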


The proof will be given at the end of this section. We need the
following elementary lemma.

\begin{lemma}\label{5Elem}
Let $E_1$ and $E_2$ be two operator spaces with a common finite
dimension $k$. Let $(e_1^1,\ldots, e_k^1)$ and  $(e_1^2,\ldots,
e_k^2)$ be some bases of $E_1$ and $E_2$ respectively. Assume that
these bases are {\it completely $1$-unconditional}, in the sense
that for any $k$-tuple $\varepsilon=
(\varepsilon_1,\ldots,\varepsilon_k)$ with $\varepsilon_i=\pm 1$,
the operators
$$
V^1_\varepsilon\colon E_1 \longrightarrow
E_1\qquad\hbox{and}\qquad V^2_\varepsilon \colon E_2
\longrightarrow  E_2
$$
defined by letting $V^1_\varepsilon(e^1_i) = \varepsilon_i e^1_i$
and $V^2_\varepsilon(e^2_i) = \varepsilon_i e^2_i$ for any $1\leq
i\leq k$ are completely contractive. Let
$$
\Delta\colon E_1\otimes_h E_2\longrightarrow  E_1\otimes_h E_2
$$
be the `diagonal' projection defined by letting
$\Delta(e^1_i\otimes e^2_j)= 0$ if $i\not=j$, and
$\Delta(e^1_i\otimes e^2_i)=e^1_i\otimes e^2_i$ for any $i\geq 1$.
Then $\Delta$ is a complete contraction.
\end{lemma}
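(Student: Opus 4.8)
The plan is to realize $\Delta$ as an average of the sign-flip operators and then invoke the functoriality of the Haagerup tensor product with respect to completely bounded maps. The whole argument is an orthogonality (Rademacher averaging) computation dressed up in operator-space language.

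First I would recall the standard fact that if $u\colon E_1\to E_1$ and $v\colon E_2\to E_2$ are completely bounded, then $u\otimes v$ extends to a completely bounded map on $E_1\otimes_h E_2$ with $\cbnorm{u\otimes v}\leq\cbnorm{u}\,\cbnorm{v}$ (see e.g. \cite{ER2} or \cite{P2}). Applying this to the given maps, for every sign vector $\varepsilon=(\varepsilon_1,\ldots,\varepsilon_k)$ with $\varepsilon_i=\pm 1$, the operator $V^1_\varepsilon\otimes V^2_\varepsilon$ is a complete contraction on $E_1\otimes_h E_2$, since by hypothesis both $V^1_\varepsilon$ and $V^2_\varepsilon$ are completely contractive. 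On a basis tensor it acts diagonally, by
\[
(V^1_\varepsilon\otimes V^2_\varepsilon)(e^1_i\otimes e^2_j)\,=\,\varepsilon_i\varepsilon_j\, e^1_i\otimes e^2_j.
\]

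Next I would average over all $2^k$ choices of $\varepsilon$. Using the elementary orthogonality relation that $\frac{1}{2^k}\sum_\varepsilon \varepsilon_i\varepsilon_j$ equals $1$ when $i=j$ and $0$ otherwise, the averaged operator acts on $e^1_i\otimes e^2_j$ exactly as $\Delta$ does. Since the basis tensors span $E_1\otimes E_2$, this gives the identity
\[
\frac{1}{2^{k}}\sum_{\varepsilon\in\{-1,1\}^{k}} V^1_\varepsilon\otimes V^2_\varepsilon \,=\, \Delta
\]
as maps on $E_1\otimes_h E_2$. Because $\cbnorm{\,\cdot\,}$ is a norm, $\Delta$ is then a convex average of complete contractions, hence itself a complete contraction, which is the desired conclusion.

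The main (and essentially only) technical point to pin down is the first step: that completely contractive maps tensor to a complete contraction on the Haagerup tensor product, with the multiplicative cb-norm bound. This is a standard property of $\otimes_h$ and may be quoted directly; once it is in hand, the averaging and the orthogonality computation are routine, so I do not expect any genuine obstacle beyond verifying that $V^1_\varepsilon\otimes V^2_\varepsilon$ is well defined with the asserted $\cbnorm{\,\cdot\,}\leq 1$ bound.
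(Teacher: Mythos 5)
Your proposal is correct and follows essentially the same route as the paper: the paper also writes $\Delta$ as the average (there phrased as an integral against the uniform probability measure on $\{-1,1\}^k$) of the maps $V^1_\varepsilon\otimes V^2_\varepsilon$, each a complete contraction on $E_1\otimes_h E_2$ by functoriality of the Haagerup tensor product, and concludes by convexity of the cb-norm. No gaps; the orthogonality computation you spell out is exactly the ``easy to check'' step in the paper.
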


\begin{proof}
Let $\mu$ be the uniform probability measure on
$\Omega=\{-1,1\}^k$. It is easy to check that
$$
\Delta\,=\,\int_\Omega V^1_\varepsilon\otimes V^2_\varepsilon\,
d\mu(\varepsilon)\,.
$$
For any $\varepsilon\in\Omega$, we have
$$
\bigcbnorm{V^1_\varepsilon\otimes V^2_\varepsilon\colon
E_1\otimes_h E_2\longrightarrow  E_1\otimes_h E_2}\leq \cbnorm{
V^1_\varepsilon}\cbnorm{V^2_\varepsilon}\leq 1.
$$
Hence
$$
\cbnorm{\Delta}\leq \int_\Omega \cbnorm{V^1_\varepsilon\otimes
V^2_\varepsilon}\, d\mu(\varepsilon)\,\leq 1.
$$
\end{proof}

We let
$$
D_k\subset \ell^2_k\otimes  \ell^2_k \otimes \ell^2_k
$$
be the $k$-dimensional subspace of $\ell^2_k\otimes  \ell^2_k
\otimes \ell^2_k$ spanned by $\{e_i\otimes e_i\otimes e_i\, :\,
1\leq i\leq k\}$. Then we let
$$
P\colon \ell^2_k\otimes  \ell^2_k \otimes \ell^2_k\longrightarrow
\ell^2_k\otimes  \ell^2_k \otimes \ell^2_k
$$
be the projection onto $D_k$ defined by letting $P(e_i\otimes
e_j\otimes e_m)= 0$ if ${\rm card}\{i,j,m\}\geq 2$, and
$P(e_i\otimes e_i\otimes e_i)= e_i\otimes e_i\otimes e_i$ for any
$i\geq 1$. If $p\geq 2$, then according to the identification
\begin{equation}\label{5Haag}
S_k^p\{R_k\}_\ell\,=\, C_k\otimes_h R_k\otimes_h
R_k\bigl(\tfrac{2}{p}\bigr)
\end{equation}
given by (\ref{2Pisier5}), we may regard $P$ as defined on
$S_k^p\{R_k\}_\ell$. Using (\ref{2Pisier6}), we can do the same
when $p<2$.

\begin{lemma}\label{5Diag}
We have
$$
\bignorm{P\colon S_k^p\{R_k\}_\ell\longrightarrow
S_k^p\{R_k\}_\ell\,}=1.
$$
Moreover, for any complex numbers $\lambda_1,\ldots,\lambda_k$, we
have
$$
\Bignorm{\sum_{i=1}^k \lambda_i\, e_i\otimes e_i\otimes
e_i}_{S_k^p\{R_k\}_\ell}\,=\,
\Bigl(\sum_{i=1}^k\vert\lambda_i\vert^p\Bigr)^{\frac{1}{p}}.
$$
\end{lemma}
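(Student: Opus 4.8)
The plan is to treat the two assertions separately: the equality $\norm{P}=1$ will be deduced from Lemma \ref{5Elem} applied to adjacent Haagerup factors, and the norm formula from an explicit diagonal factorisation together with the duality of Proposition \ref{2Duality2}.

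For the first assertion I would begin by checking that on each of the Hilbertian factors $C_k$, $R_k$ and $R_k(\theta)$ the sign-flip operators $V_\varepsilon(e_i)=\varepsilon_i e_i$ are complete contractions, so that their canonical bases are completely $1$-unconditional in the sense of Lemma \ref{5Elem}. On $C_k$ (resp. $R_k$) such a $V_\varepsilon$ is left (resp. right) multiplication by the diagonal unitary $\mathrm{diag}(\varepsilon_1,\dots,\varepsilon_k)\in M_k$, hence completely isometric; the case $R_k(\theta)=[R_k,C_k]_\theta$ then follows by interpolation of the matrix norms. Using the identification (\ref{5Haag}) when $p\geq 2$ (and (\ref{2Pisier6}) when $p<2$), I would write $P=\Delta^{(12)}\circ\Delta^{(23)}$, where $\Delta^{(23)}$ retains only the terms $e_i\otimes e_j\otimes e_m$ with $j=m$ and $\Delta^{(12)}$ retains only those with $i=j$, so that their composition keeps exactly the terms with $i=j=m$. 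Each of these is the diagonal projection on a pair of adjacent Haagerup factors tensored with the identity on the remaining factor, so Lemma \ref{5Elem} together with the multiplicativity of the completely bounded norm under $\otimes_h$ shows that each is a complete contraction. Hence $\bigcbnorm{P}\leq 1$, and since $P$ is a nonzero idempotent we get $\norm{P}=1$.

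For the second assertion, fix scalars $\lambda_1,\dots,\lambda_k$ and regard $y=\sum_i\lambda_i\,e_i\otimes e_i\otimes e_i$ as the element $\sum_i\lambda_i E_{ii}\otimes e_i$ of $S^p_k\otimes R_k$. For the upper bound I would use the factorisation defining $\norm{\ }_{\alpha_p^\ell}$: when $p\geq 2$, taking $z=\sum_i E_{ii}\otimes e_i$, $c=I_k$ and the diagonal $d=\sum_i\lambda_i E_{ii}$ gives $y=czd$ with $\minnorm{z}=\norm{\sum_i E_{ii}}_\infty^{1/2}=1$, $\norm{c}_\infty=1$ and $\norm{d}_p=(\sum_i|\lambda_i|^p)^{1/p}$; when $p<2$ the analogous factorisation $y=azb$ with diagonal weights of modulus $|\lambda_i|^{p/q}$ and $|\lambda_i|^{p/2}$ does the same job. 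In both cases this yields $\norm{y}\leq(\sum_i|\lambda_i|^p)^{1/p}$. For the matching lower bound I would pair $y$ with the diagonal element $y'=\sum_i\rho_i E_{ii}\otimes e_i$ of $S^{p'}_k\{R^{*\,{\rm op}}\}_\ell=S^{p'}_k\{R\}_\ell$, using $R^{*\,{\rm op}}=R$ and the pairing of Proposition \ref{2Duality2}, which evaluates to $\sum_i\lambda_i\rho_i$. The upper bound applied to the conjugate exponent gives $\norm{y'}\leq(\sum_i|\rho_i|^{p'})^{1/p'}$, so choosing $\rho_i=\overline{\lambda_i}\,|\lambda_i|^{p-2}$ (and $\rho_i=0$ where $\lambda_i=0$) and using $(p-1)p'=p$ produces $\sum_i|\lambda_i|^p\leq\norm{y}\,(\sum_i|\lambda_i|^p)^{1/p'}$, that is, $\norm{y}\geq(\sum_i|\lambda_i|^p)^{1/p}$.

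The diagonal factorisations and the choice of dual weights are routine. The one point requiring genuine care is the decomposition $P=\Delta^{(12)}\circ\Delta^{(23)}$ of the three-fold diagonal into adjacent pairwise diagonals together with the verification of the unconditionality hypotheses; the delicate ingredient there is that the sign-flips remain complete contractions on the interpolated space $R_k(\theta)$, which is exactly where the interpolation identity for the matrix norms of $[R,C]_\theta$ enters.
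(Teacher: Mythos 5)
Your proof is correct, and while it shares the paper's starting point, it follows a genuinely different route in both halves. Like the paper, you factor $P=(\Delta\otimes I)\circ(I\otimes\Delta)$ through pairwise diagonals of adjacent Haagerup factors and you rely on Lemma \ref{5Elem}; but for contractivity you apply that lemma directly to \emph{both} adjacent pairs ($C_k\otimes_h R_k$ and $R_k\otimes_h R_k(\tfrac{2}{p})$ when $p\geq 2$, and the analogous pairs coming from (\ref{2Pisier6}) when $p<2$), the only real point being that sign flips on $R_k(\theta)=[R_k,C_k]_\theta$ are complete contractions by interpolation of the matrix norms, and you then conclude by functoriality of $\otimes_h$ and the idempotency of $P$. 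The paper instead applies Lemma \ref{5Elem} only once, to $C_k\otimes_h{\rm Max}(\ell^1_k)$, and otherwise argues through explicit identifications ($R_k\otimes_h R_k\simeq R_{k^2}$, $R_k\otimes_h C_k\simeq S^1_k$, $C_k\otimes_h{\rm Max}(\ell^1_k)\simeq CB(\ell^\infty_k,C_k)$) together with \cite[Theorem 5.22]{P2}, which lets it commute $\otimes_h$ with complex interpolation; the payoff of that heavier machinery is that it keeps track of the \emph{ranges} of the successive diagonal projections, ending with the isometric identification of the diagonal range as $[\ell^\infty_k,\ell^2_k]_{2/p}=\ell^p_k$, so the ``moreover'' norm formula drops out of the same computation. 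Since your contractivity argument does not identify the range, you must prove the norm formula independently, which you do correctly: the upper bound by the explicit factorisation $y=czd$ with $c=I_k$, $z=\sum_i E_{ii}\otimes e_i$, $d=\sum_i\lambda_i E_{ii}$ (and its H\"older-split analogue for $p<2$, using $\tfrac{p}{q}+\tfrac{p}{2}=1$), and the lower bound by pairing with the conjugate diagonal element $\sum_i\overline{\lambda_i}\vert\lambda_i\vert^{p-2}E_{ii}\otimes e_i$ under the isometric duality $(S^p_k\{R_k\}_\ell)^*\simeq S^{p'}_k\{R_k\}_\ell$ of Proposition \ref{2Duality2} (legitimate since $R^{*\,\rm op}=R$), with the exponent bookkeeping $(p-1)p'=p$ handled correctly. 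In sum, your argument is more elementary and self-contained --- it avoids the commutation of $\otimes_h$ with interpolation and the interpolation of complemented ranges --- at the cost of a separate duality step; the paper's argument is longer but delivers both assertions in one unified interpolation scheme that identifies the range of $P$ outright.
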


\begin{proof}
We assume that $p\geq 2$, the proof for $p<2$ being similar. Let
$$
\Delta\colon \ell^2_k\otimes \ell^2_k \longrightarrow
\ell^2_k\otimes \ell^2_k
$$
be the diagonal projection (in the sense of Lemma \ref{5Elem}).
Then we can write
\begin{equation}\label{5Comp}
P\, =\, (\Delta\otimes I_{\ell^2_k})\circ(I_{\ell^2_k}\otimes
\Delta),
\end{equation}
which is going to lead us to a two step proof.

We need several elementary operator space results, for which we
refer e.g. to \cite[Chapter 5]{P2} or \cite[Section 9.3]{ER2}.
First, $C_k\otimes_h R_k\simeq M_k$, and the diagonal of
$C_k\otimes_h R_k$ coincides with the commutative $C^*$-algebra
$\ell^\infty_k$. Second, $R_k\otimes_h C_k\simeq M_k^* = S^1_k$,
and the diagonal of $R_k\otimes_h C_k$ coincides with the operator
space dual of $\ell^\infty_k$, that is ${\rm Max}(\ell^1_k)$ (see
e.g.  \cite[Chapter 3]{P2}). Third, $R_k\otimes_h R_k\simeq
R_{k^2}$. We deduce form above that
$$
\bigcbnorm{\Delta\colon R_k\otimes_h R_k\to  R_k\otimes_h
R_k}=1\qquad\hbox{and}\qquad \bigcbnorm{\Delta\colon R_k\otimes_h
C_k\to R_k\otimes_h C_k}=1
$$
and moreover,
$$
\Delta(R_k\otimes_h R_k)\simeq R_k \qquad\hbox{and}\qquad
\Delta(R_k\otimes_h C_k)\simeq {\rm Max}(\ell^1_k)
$$
completely isometrically.

Next according to \cite[Theorem 5.22]{P2}, we have
$$
R_k\otimes_h R_k\bigl(\tfrac{2}{p}\bigr)\,\simeq\, \bigl[
R_k\otimes_h R_k,R_k\otimes_h C_k\bigr]_{\tfrac{2}{p}}
$$
completely isometrically. Hence by interpolation,
\begin{equation}\label{5Q1}
\bigcbnorm{\Delta\colon R_k\otimes_h
R_k\bigl(\tfrac{2}{p}\bigr)\longrightarrow R_k\otimes_h
R_k\bigl(\tfrac{2}{p}\bigr)} = 1
\end{equation}
and we have
\begin{equation}\label{5Q2}
\Delta\bigl(R_k\otimes_h
R_k\bigl(\tfrac{2}{p}\bigr)\bigr)\,\simeq\, \bigl[R_k, {\rm
Max}(\ell^1_k)\bigr]_{\tfrac{2}{p}}
\end{equation}
completely isometrically.

Now applying Lemma \ref{5Elem} with $E_1=C_k$ and $E_2={\rm
Max}(\ell^1_k)$, we find that
$$
\bigcbnorm{\Delta\colon C_k\otimes_h {\rm
Max}(\ell^1_k)\longrightarrow C_k\otimes_h {\rm Max}(\ell^1_k)}=1.
$$
We claim that
$$
\Delta\bigl(C_k\otimes_h {\rm Max}(\ell^1_k)\bigr)\,\simeq\,
\ell^2_k
$$
isometrically. Indeed, we have $C_k\otimes_h {\rm Max}(\ell^1_k) =
C_k\minten {\rm Max}(\ell^1_k)\simeq CB(\ell^\infty_k,C_k)$. Hence
writing $B=B(\ell^{2})$ for simplicity, we have for any
$\lambda_1,\ldots,\lambda_k$ in $\Cdb$ that
\begin{align*}
\Bignorm{\sum_{i=1}^k\lambda_i\, e_i\otimes e_i}_{C_k\otimes_h
{\rm Max}(\ell^1_k)}\,
&=\,\sup\Bigl\{\Bignorm{\sum_{i=1}^k\lambda_i  e_i\otimes
y_i}_{C_k\minten B}\, :\, y_i\in B,\
\sup_i\norm{y_i}\leq 1\,\Bigr\}\\
&=\,\sup\Bigl\{\Bignorm{\sum_{i=1}^k \vert\lambda_i\vert^2 y_i^*
y_i}_{B}^{\frac{1}{2}}\, :\, y_i\in B,\
\sup_i\norm{y_i}\leq 1\,\Bigr\}\\
&=\,\Bigl(\sum_{i=1}^k \vert\lambda_i\vert^2\Bigr)^{\frac{1}{2}}.
\end{align*}
On the other hand,
$$
\bigcbnorm{\Delta\colon C_k\otimes_h R_k \longrightarrow
C_k\otimes_h R_k}=1\qquad\hbox{and}\qquad \Delta\bigl(C_k\otimes_h
R_k\bigr)\,\simeq\, \ell^\infty_k.
$$
Since
$$
C_k\otimes_h [R_k,{\rm Max}(\ell^1_k)]_{\frac{2}{p}} =
\bigl[C_k\otimes_h R_k, C_k\otimes_h {\rm
Max}(\ell^1_k)\bigr]_{\frac{2}{p}},
$$
we deduce by interpolation that
\begin{equation}\label{5Q3}
\bigcbnorm{\Delta\colon C_k\otimes_h [R_k,{\rm
Max}(\ell^1_k)]_{\frac{2}{p}}\longrightarrow C_k\otimes_h
[R_k,{\rm Max}(\ell^1_k)]_{\frac{2}{p}}\,}=1.
\end{equation}
Since $[\ell^\infty_k,\ell^2_k]_{\frac{p}{2}}=\ell^p_k$, we obtain
in addition that
\begin{equation}\label{5Q4}
\Delta\bigl(C_k\otimes_h [R_k,{\rm
Max}(\ell^1_k)]_{\frac{2}{p}}\bigr)\,\simeq\, \ell^p_k
\end{equation}
isometrically.

Using (\ref{5Haag}) and the composition formula (\ref{5Comp}), we
deduce from (\ref{5Q1}), (\ref{5Q2}), (\ref{5Q3}) and (\ref{5Q4})
that $P$ is a contraction on $S^p_k\{R_k\}_\ell$, and that its
range is equal to $\ell^p_k$.
\end{proof}

\begin{proof} (Of Theorem \ref{5Main}.) The assertion (2) follows
from (1) by Corollary \ref{4Rigid}, so we only need to prove (1).
As in Section 4, we let
$$
S_k^p\otimes_\beta \ell^2_k\,=\, S_k^p\{R_k\}_\ell \,+_p\,
S_k^p\{C_k\}_r.
$$
We observe that Lemma \ref{5Diag} holds as well with
$S^p_k\{C_k\}_r$ replacing $S^p_k\{R_k\}_\ell$. Namely, $P$ is
contractive on $S^p_k\{C_k\}_r$, and $P(S^p_k\{C_k\}_r)$ is equal
to $\ell^p_k$. We deduce that
\begin{equation}\label{5NormP}
\bignorm{P\colon S_k^p\otimes_\beta \ell^2_k \longrightarrow
S_k^p\otimes_\beta \ell^2_k }=1
\end{equation}
and that for any complex numbers $\lambda_1,\ldots,\lambda_k$, we
have
\begin{equation}\label{5NormD}
\Bigl(\sum_{i=1}^k\vert\lambda_i\vert^p\Bigr)^{\frac{1}{p}}\,\leq
2 \,\Bignorm{\sum_{i=1}^k \lambda_i\, e_i\otimes e_i\otimes
e_i}_{S_k^p\otimes_\beta \ell^2_k}.
\end{equation}
Now consider
$$
w\,=\,\sum_{i=1}^k e_i\otimes e_i\otimes e_1.
$$
By (\ref{5Haag}) we have
$$
\norm{w}_{S^p_k\{R_k\}_\ell}=\Bignorm{\sum_{i=1}^k e_i\otimes
e_i}_{C_k\otimes_h R_k} = \norm{I_k}_\infty =1.
$$
Recall that if we regard $S^p_k\{R_k\}_\ell$ as the tensor product
$S^p_k\otimes \ell^2_k$, then $e_i\otimes e_j\otimes e_m$
corresponds to $E_{im}\otimes e_j$. Hence we have
\begin{align*}
\bigl(u_1\otimes I_{\ell_k^2}\bigr) (w) & =
k^{-\frac{1}{2p}}\sum_{i=1}^k e_i\otimes e_i\otimes e_i;
\\
\bigl(u_2\otimes I_{\ell_k^2}\bigr) (w) & = k^{-\frac{1}{2p}}
e_1\otimes e_1\otimes e_1;\\
\bigl(u_3\otimes I_{\ell_k^2} \bigr) (w) &= e_1\otimes e_1\otimes
e_1;
\\
\bigl(u_4 \otimes I_{\ell_k^2}\bigr) (w) & =
k^{-\frac{1}{p}}\sum_{i=1}^k  e_i\otimes e_1\otimes e_i.
\end{align*}
Consequently,
$$
P\bigl(u\otimes I_{\ell_k^2}\bigr) (w) =\frac{1}{4}\,\Bigl(
k^{-\frac{1}{2p}}\sum_{i=1}^k e_i\otimes e_i\otimes e_i \,
+\,\bigl(k^{-\frac{1}{2p}} + 1 + k^{-\frac{1}{p}}\bigr) e_1\otimes
e_1\otimes e_1\Bigr).
$$
Applying (\ref{5NormD}) and (\ref{5NormP}), we deduce that
\begin{align*}
\Bigl(\bigl( 2k^{-\frac{1}{2p}} + 1 + k^{-\frac{1}{p}}\bigr)^p\,
+\, (k-1)k^{-\frac{1}{2}}\Bigr)^{\frac{1}{p}}\,& \leq\, 8\bignorm{
P\bigl(u\otimes I_{\ell_k^2}\bigr) (w)}_\beta\\ &\,\leq 8
\bignorm{u\otimes I_{\ell^2_k}\colon
S_k^p\{R_k\}_\ell\longrightarrow S_k^p\otimes_\beta \ell^2_k}.
\end{align*}
This proves (1).
\end{proof}

\vskip 1cm

\end{document}